\definecolor{webgreen}{rgb}{0,.5,0}
\definecolor{webbrown}{rgb}{.8,0,0}
\definecolor{emphcolor}{rgb}{0.5,0.95,0.95}
\ifpdf \hypersetup{pdftex,
            pdfstartview=FitH, 
            bookmarksopen=true,
            bookmarksnumbered=true
} \else \hypersetup{dvips} \fi
\newcommand {\ud}{{\rm d}}
\numberwithin{equation}{section}
\newtheorem{proposition}{Proposition}[section]
\newtheorem{remark}{Remark}[section]
\newtheorem{lemma}{Lemma}[section]
\newtheorem{assump}{Assumption}[section]
\numberwithin{remark}{section} \numberwithin{proposition}{section}
\numberwithin{corollary}{section}
\newcommand {\R}{\mathbb{R}}
\newcommand {\p}{\mathbb{P}}
\newcommand {\E}{\mathbb{E}}
\newcommand{\diff}{{\rm d}}
\newcommand{\lev}{L\'{e}vy }
\newcommand{\e}{\mathbb{E}}
\begin{document}
\title{On optimal periodic dividend strategies  for L\'evy risk processes}

	\thanks{This version: \today.   }

\author[K. Noba]{Kei Noba$^*$}

\author[J. L. P\'erez]{Jos\'e-Luis P\'erez$^\dag$}
\thanks{$*$\, Department of Mathematics, Graduate School of Science, Kyoto University
Sakyo-ku, Kyoto 606-8502, Japan. Email: knoba@math.kyoto-u.ac.jp (K. Noba), kyano@math.kyoto-u.ac.jp (K. Yano)}
\thanks{$\dagger$\, Department of Probability and Statistics, Centro de Investigaci\'on en Matem\'aticas A.C. Calle Jalisco s/n. C.P. 36240, Guanajuato, Mexico. Email: jluis.garmendia@cimat.mx.  }
\author[K. Yamazaki]{Kazutoshi Yamazaki$^\ddagger$}
\thanks{$\ddagger$\, (corresponding author) Department of Mathematics,
Faculty of Engineering Science, Kansai University, 3-3-35 Yamate-cho, Suita-shi, Osaka 564-8680, Japan. Email: kyamazak@kansai-u.ac.jp.   }

\author[K. Yano]{Kouji Yano$^*$}
\date{}

\begin{abstract} 
In this paper, we revisit the optimal periodic dividend problem, in which dividend
payments can only be made at the jump times of an independent Poisson
process.  In the dual (spectrally positive L\'evy)
model, recent results have shown the optimality of a periodic barrier strategy, which pays dividends at Poissonian dividend-decision times, if and only if the surplus is
above some level.  In this paper, we show the optimality of this strategy for a
spectrally negative \lev process whose dual has a completely monotone \lev
density.  The optimal strategies and value functions are concisely
written in terms of the scale functions. Numerical results are also provided.
\\
\noindent \small{\noindent  AMS 2010 Subject Classifications: 60G51, 93E20, 91B30\\
JEL Classifications: C44, C61, G24, G32, G35 \\
\textbf{Keywords:} dividends; spectrally negative \lev processes; scale functions; periodic barrier strategies.}
\end{abstract}

\maketitle

\section{Introduction} 

In the classical de Finetti's optimal dividend problem, the expected total discounted dividends accumulated until ruin are maximized. To model the surplus of an insurance company that increases by premium and decreases by insurance payments, a compound Poisson process with downward jumps or more generally  a spectrally negative \lev process is used.  Nowadays, fluctuation theory and scale functions are known to be useful, particularly if the optimal strategy is guessed to be a \emph{barrier strategy} reflecting the underlying process at an upper barrier.  Numerous computations are possible for the reflected \lev process, and these can be used to solve the problem in a straightforward manner.

Despite the analytical tractability of the classical continuous-time model, the barrier strategies are unfortunately not implementable in practice. On the other hand,  while the models with deterministic discrete payment times are ideal, they lack analytical tractability, and numerical methods are required to solve them. Recently, with the aim of developing a more realistic  yet analytically tractable model, random discrete payment times were considered.  For example, in the research by Albrecher et al.\
 \cite{ABT, ACT}, if the random times are suitably chosen, analytical approaches can be used to compute various identities of interest. Random observation times are also suggested in various economic literatures.  See, for example, the discussion in the introduction of \cite{SS} motivated by  \emph{rational inattention}  \cite{Sims} in macroeconomics literature. See also the discussions given in \cite{Lempa} and \cite{BL} for  real option problems with random intervention times.

In this paper, we focus on the \emph{periodic barrier strategy} and its optimality when dividend
payments can only be made at the jump times of an independent Poisson
process.   In this context,  Avanzi et al.\ \cite{ATW} solved the case with positive hyperexponential jumps; this case was later generalized by P\'erez and Yamazaki \cite{PY} and Zhao et al.\ \cite{ZCY} for a general spectrally positive \lev process. 
By assuming that the intervals are independent exponential random variables, 
we can still formulate it as a one-dimensional Markovian problem. It is also known (see, e.g., \cite{leung2015analytic} in the context of finance) that this can give an approximation for the case of constant intervals.  See also the Erlang(2) interarrival time case recently considered by Avanzi et al. \cite{ATW_erlang}. 





We consider the spectrally negative \lev case, which is suitable in the insurance context. 
 In the spectrally negative case, the surplus process can instantaneously jump to the liquidation region where the value function flattens suddenly, and for this reason the analysis is sensitive to the choice of the \lev measure. On the other hand, this never happens in the spectrally positive case and the liquidation region can be ignored.  For these reasons, the proof of optimality is significantly more difficult in the spectrally negative model than in the spectrally positive model. In particular, in order to show the variational inequalities, the spectrally positive \lev case can be handled using known general results on the scale function.  On the other hand, for the spectrally negative case, these are not sufficient and further properties of the scale functions  that hold only for a subset of spectrally negative \lev processes need to be considered (see Theorem 2 and Corollary 1 in \cite{L2}).

 One currently known sufficient condition is the \emph{completely monotone} assumption on the \lev density, under which the scale function 
 can be written as the difference of an exponential function and a completely monotone function (see Remark \ref{remark_scale_function_completely_monotone}).  Accordingly, Loeffen \cite{L} (see also Yin et al. \cite{YW} for an analytic approach) and Kyprianou et al.\ \cite{KLP} showed the optimality of a barrier strategy in the classical case and that of a threshold strategy under the absolutely continuous assumption on the dividend strategy, respectively.  In this paper, we show that the completely monotone assumption is again a sufficient condition for the optimality of a periodic barrier strategy in the considered problem.

The class of \lev  risk processes with completely monotone \lev densities include a variety of important processes.
To name a few, we have the spectrally negative $\alpha$-stable process used in \cite{F}, the (one-sided) gamma process considered in \cite{DG1}, the (one-sided) inverse Gaussian process used in \cite{DG}, and finally Cr\'amer-Lundberg processes with heavy-tailed Weibull, Pareto, and hyperexponential jumps (see \cite{A} Chapter 1.2).  

By Bernstein's theorem, a completely monotone function has the form
\begin{align*}
f(t) = \int_0^\infty e^{-tx} \lambda(\diff x),
\end{align*}
for a possibly infinite measure $\lambda$.
From this, it can be seen that a \lev measure with a completely monotone density is roughly a mixture of exponential distributions, implying that larger jumps are less frequent.  We refer the reader to, e.g., Feldmann and Whitt \cite{FW} regarding the approximation of a completely monotone distribution using mixtures of exponential distributions. The empirical results shown in \cite{CGMY} suggest that  financial models should be modeled using \lev processes with completely monotone \lev densities.


 Under the periodic barrier strategy, the surplus is pushed down to a given barrier at each Poisson arrival time at which it is above the barrier.   The controlled process is precisely the \emph{Parisian-reflected \lev process} considered in \cite{APY, PYM}.  Its fluctuation identities can be used efficiently to conduct the following ``\emph{guess and verify}" procedure: 
\begin{enumerate}
\item The expected net present value (NPV) of dividends under the periodic barrier strategy can be written in terms of generalizations of the scale functions.
The candidate barrier, which we call $b^*$, is chosen so that the corresponding (candidate) value function, if $b^* > 0$,  \emph{becomes smoother at the barrier}.  In particular, this candidate barrier is chosen so that it becomes $C^2(0, \infty)$ (resp.\ $C^3(0, \infty)$) for the case of bounded (resp.\ unbounded) variation.  
\item We then analyze the existence of $b^*$ such that the expected NPV satisfies the smoothness condition. To this end, we use the special property of the scale function under the completely monotone assumption that its derivative first decreases and then increases.  We will achieve $b^* > 0$ or $b^*=0$, where, for the case $b^* = 0$, the future prospect is negative and it should be liquidated as quickly as possible.

\item By using the selected candidate barrier $b^* \geq 0$, its optimality is confirmed through a verification lemma requiring the analysis of the harmonic property and the slope of the candidate value function.  By using the known property of the scale functions, the harmonic property can be analyzed easily.  In contrast, the analysis of the slope above the barrier $b^*$ is a great challenge.
Motivated by the technique used in Kyprianou et al.\ \cite{KLP}, we manage this by using the decomposition of the scale function to an exponential function and a completely monotone function.
\end{enumerate}

To observe the link with the classical case, we also  analyze the convergence as the Poisson arrival rate increases to infinity.  In particular, we show that the optimal barrier $b^*$ and  the value function converge to those in the classical case \cite{APP2007, L}.  The analytical results are further confirmed through  a
 sequence of numerical experiments. By using a simple case with  i.i.d.\ exponentially distributed jumps, we confirm the optimality and conduct sensitivity analysis with respect to the parameters describing the problem.

The remainder of the paper is organized as follows. In Section \ref{section_preliminaries}, we review the spectrally negative \lev process and present the mathematical model.  In Section \ref{PR}, we review the periodic barrier strategy and obtain the expected NPV of dividends by using the scale functions.  Section \ref{sf_d} states a condition of the candidate barrier $b^*$ and shows its existence.  The optimality of the selected strategy is confirmed in Section \ref{exist_thres}.  Section \ref{section_convergence} shows the analysis of the convergence as the Poisson arrival rate goes to infinity. Finally, Section \ref{section_numerics} concludes the paper with numerical results.

	\section{Preliminaries} \label{section_preliminaries}
\subsection{Spectrally negative \lev processes} 
Let $X=(X(t); t\geq 0)$ be a L\'evy process defined on a  probability space $(\Omega, \mathcal{F}, \p)$.  For $x\in \R$, we denote the law of $X$ by $\p_x$ when it starts at $x$ and, for convenience, write  $\p$ in place of $\p_0$. Accordingly, we write $\e_x$ and $\e$ for the associated expectation operators. 
Throughout the paper, we assume that $X$ is \textit{spectrally negative},   meaning here that it has no positive jumps and that it is not the negative of a subordinator. We define the Laplace exponent 
\begin{equation}\label{lk}
\psi(\theta):=\log \e\big[{\rm e}^{\theta X(1)}\big] = \gamma\theta+\frac{\eta^2}{2}\theta^2+\int_{(-\infty,0)}\big({\rm e}^{\theta z}-1 -\theta z \mathbf{1}_{\{z > -1 \}}\big)\Pi(\ud z), \quad \theta \geq 0,
\end{equation}
where $\gamma\in \R$, $\eta \ge 0$, and $\Pi$ is a measure on $(-\infty,0)$ called the L\'evy measure of $X$ that satisfies
\[
\int_{(-\infty,0)}(1\land z^2)\Pi(\ud z)<\infty.
\]

It is well-known that $X$ has paths of bounded variation if and only if $\eta=0$ and $\int_{(-1,0)} |z|\Pi(\mathrm{d}z)$ is finite.  In this case, its Laplace exponent is given by
\begin{equation}
	\psi(\theta) = c \theta+\int_{(-\infty,0)}\big( {\rm e}^{\theta z}-1\big)\Pi(\ud z), \quad \theta \geq 0, \label{psi_bounded_var}
\end{equation}
where 
\begin{align}
	c:=\gamma-\int_{(-1,0)} z\Pi(\mathrm{d}z). \label{def_drift_finite_var}
\end{align}
 Note that  necessarily $c>0$, since we have ruled out the case that $X$ has monotone paths.

	
	\subsection{The optimal dividend problem with Poissonian dividend-decision times.}\label{dividends-strategy} 
	
	We assume that the dividend payments can only be made at the arrival times $\mathcal{T}_r :=(T(i); i\geq 1 )$ of a Poisson process $N^r=( N^r(t); t\geq 0) $ with intensity $r>0$, which is independent of  $X$.  
 Let $\mathbb{F} := (\mathcal{F}(t); t \geq 0)$ be the filtration generated by the processes $(X, N^r)$. 

In this setting, a strategy  $\pi := \left( L^{\pi}(t); t \geq 0 \right)$ is a nondecreasing, right-continuous, and $\mathbb{F}$-adapted process such that  the cumulative amount of dividends $L^{\pi}$ admits the form
	\[
	L^{\pi}(t)=\int_{[0,t]}\nu^{\pi}(s)\diff N^r(s),\qquad\text{$t\geq0$,}
	\]
	for some $\mathbb{F}$-adapted c\`agl\`ad process $\nu^{\pi}$.
For a more detailed description of the problem, we refer the reader to the spectrally positive case studied in \cite{PY}.
	

	The surplus process $U^\pi$ after dividends are deducted is such that 
	\[
	U^\pi(t) := X(t)- L^\pi (t) =X(t)-\sum_{i=1}^{\infty}\nu^{\pi}(T(i))1_{\{T(i)\leq t\}}, \qquad\text{$0\leq t\leq \sigma_0^{\pi}$}, 
	\]
	where 
	\begin{align*}
		\sigma_0^{\pi}:=\inf\{t>0:U^{\pi}(t)<0\}
	\end{align*}
	is the corresponding (continuously-monitored) ruin time.  Here and throughout, let $\inf \varnothing = \infty$. 
As in \cite{PY}, the payment cannot exceed the available surplus and hence
	\begin{align} \label{surplus_constraint}
		0\leq \nu^{\pi}(s) \leq U^\pi(s-), \quad s \geq 0.
	\end{align}

	Over  the set of all admissible strategies $\mathcal{A}_r$ that satisfy all the constraints described above,
	we need to maximize, for $q > 0$, the expected NPV of dividends paid until ruin: 
	\begin{align*}
		v_{\pi} (x) := \mathbb{E}_x \Big( \int_{[0,\sigma_0^{\pi}]} e^{-q t} \diff L^{\pi}(t)\Big) = \mathbb{E}_x \Big(\int_{[0,\sigma_0^{\pi}]} e^{-q t}  \nu^{\pi}(t)\diff N^r(t) \Big), \quad x \geq 0.
	\end{align*}
	Hence, the problem is to compute the value function
	\begin{equation*}
		v(x):=\sup_{\pi \in \mathcal{A}_r}v_{\pi}(x), \quad x \geq 0,
	\end{equation*}
	and obtain the optimal strategy $\pi^*$ that attains it, if such a strategy exists.
	
	Hereafter,  we mean, by $\mathbf{e}_p$, 
	an exponential random variable with parameter $p > 0$, independent of the process $X$ so that we can write $T(1) = \mathbf{e}_r$. 
	
		\section{
			Periodic barrier strategies}\label{PR}
			
		
		As in the spectrally positive case \cite{PY}, our objective is to show the optimality of the \emph{periodic barrier strategy}, 
		say $\pi^{b} = (L_r^b(t); t \geq 0)$, with the resulting controlled process $U_r^{b}$ being the \emph{\lev process with Parisian reflection above} given as follows. 
		We have
	\begin{align} \label{X_X_r_the_same}
		U_r^{b}(t) = X(t), \quad 0 \leq t < T_b^+(1)
	\end{align}
	where
	\begin{align} T_b^+(1) := \inf\{T(i):\; X(T(i)) >b\}. \label{def_T_0_1}
	\end{align}
	The process then jumps downward by $X(T_b^+(1))-b$ so that $U_r^{b}(T_b^+(1)) = b$. For $T_b^+(1) \leq t < T_b^+(2)  := \inf\{ S \in  \mathcal{T}_r: S > T_b^+(1),  U^{b}_r( S-) > b\}$, we have $U_r^{b}(t) = X(t) - (X(T_b^+(1))-b)$.  The process $U_r^{b}$ can be constructed by repeating this procedure.



	
	Suppose $L_r^b(t)$ is the cumulative amount of (Parisian) reflection until time $t \geq 0$. Then we have
	\begin{align*}
		U_r^{b}(t) = X(t) - L_r^b(t), \quad t \geq 0,
	\end{align*}
	with
	\begin{align}
		L_r^b(t) = \sum_{T_b^+(i) \leq t} \left(U_r^{b}(T_b^+(i)-)-b\right), \quad t \geq 0, \label{def_L_r}
	\end{align}
	where $(T_b^+(n); n \geq 1)$ can be constructed inductively by using \eqref{def_T_0_1} and
	\begin{eqnarray*}T_b^+(n+1) := \inf\{ S \in  \mathcal{T}_r: S > T_b^+(n),  U_r^{b}(S-) >b\}, \quad n \geq 1.
	\end{eqnarray*}

		It is clear that the strategy $\pi^b := (L_r^b(t); t \geq 0 )$, for $b \geq 0$, is admissible with $\nu^{\pi^b}(t) = (U_r^b(t-) - b) \vee 0$. We denote its expected NPV of dividends by 
		\begin{align}\label{vf}
			v_b(x):=\E_x\Big(\int_{[0,\sigma_0^b]}e^{-qt}\diff L_r^b(t)\Big), \quad x \geq 0,
		\end{align}
		where 
		\begin{align*}
			\sigma_0^b := \inf \{ t > 0: U_r^b(t) < 0 \}.
		\end{align*}
		\subsection{Computation of the expected NPV \eqref{vf}}
		The expected NPV of dividends as in \eqref{vf} can be computed directly by using the fluctuation theory.  Toward this end, we first review the scale functions.
		
		Fix $q > 0$. We use $W^{(q)}: \R \to [0, \infty)$ for the scale function of the spectrally negative \lev process $X$, 
which takes the value zero on the negative half-line, while on the positive half-line, it is a continuous and strictly increasing function such that
		\begin{align} \label{scale_function_laplace}
			\begin{split}
				\int_0^\infty  \mathrm{e}^{-\theta x} W^{(q)}(x) \diff x &= \frac 1 {\psi(\theta)-q}, \quad \theta > \Phi(q),
			\end{split}
		\end{align}
		where $\psi$ is as defined in \eqref{lk} and
		\begin{align}
			\begin{split}
				\Phi(q) := \sup \{ \lambda \geq 0: \psi(\lambda) = q\} . 
			\end{split}
			\label{def_varphi}
		\end{align}
		We also define, for $x \in \R$, 
		\begin{align*}
			\overline{W}^{(q)}(x) &:=  \int_0^x W^{(q)}(y) \diff y, \\ 
			\overline{\overline{W}}^{(q)}(x) &:=  \int_0^x \overline{W}^{(q)}(y) \diff y, \\ 
			Z^{(q)}(x) &:= 1 + q \overline{W}^{(q)}(x), \\
			\overline{Z}^{(q)}(x) &:= \int_0^x Z^{(q)} (z) \diff z = x + q \int_0^x \int_0^z W^{(q)} (w) \diff w \diff z.
		\end{align*}
		Because $W^{(q)}(x) = 0$ for $-\infty < x < 0$, we have
		\begin{align}
			\overline{W}^{(q)}(x) = 0,\quad \overline{\overline{W}}^{(q)}(x) = 0,\quad Z^{(q)}(x) = 1,
			\quad \textrm{and} \quad \overline{Z}^{(q)}(x) = x, \quad x \leq 0.  \label{z_below_zero}
		\end{align}
		
		If we define $\tau_0^- := \inf \left\{ t \geq 0: X(t) < 0 \right\}$ and $\tau_b^+ := \inf \left\{ t \geq 0: X(t) >  b \right\}$ for any $b > 0$,
		then
		\begin{align}
			\begin{split}
				\E_x \left( e^{-q \tau_b^+} 1_{\left\{ \tau_b^+ < \tau_0^- \right\}}\right) &= \frac {W^{(q)}(x)}  {W^{(q)}(b)}, \\
				 \E_x \left( e^{-q \tau_0^-} 1_{\left\{ \tau_b^+ > \tau_0^- \right\}}\right) &= Z^{(q)}(x) -  Z^{(q)}(b) \frac {W^{(q)}(x)}  {W^{(q)}(b)}.
			\end{split}
			\label{laplace_in_terms_of_z}
		\end{align}
		
		\begin{remark} \label{remark_smoothness_zero}
			Regarding the asymptotic behaviors near zero, as in Lemmas 3.1 and 3.2 of \cite{KKR},
			\begin{align}\label{eq:Wqp0}
				\begin{split}
					W^{(q)} (0) &= \left\{ \begin{array}{ll} 0 & \textrm{if $X$ is of unbounded
							variation,} \\ \frac 1 {c} & \textrm{if $X$ is of bounded variation,}
					\end{array} \right. \\
					W^{(q)\prime} (0+) &:= \lim_{x \downarrow 0}W^{(q)\prime} (x) =
					\left\{ \begin{array}{ll}  \frac 2 {\eta^2} & \textrm{if }\eta > 0, \\
						\infty & \textrm{if }\eta = 0 \; \textrm{and} \; \Pi(-\infty,0)= \infty, \\
						\frac {q + \Pi(-\infty,0)} {c^2} &  \textrm{if }\eta = 0 \; \textrm{and} \; \Pi(-\infty, 0) < \infty.
					\end{array} \right.
				\end{split}
			\end{align}
			In addition, according to
			 Lemma 3.3 of \cite{KKR},
			\begin{align}
				\begin{split}
					e^{-\Phi(q) x}W^{(q)} (x) \nearrow \psi'(\Phi(q))^{-1}, \quad \textrm{as } x \uparrow \infty.
				\end{split}
				\label{W_q_limit}
			\end{align}
		\end{remark}
		
		For the expression of \eqref{vf}, we also use the scale function $W^{(q+r)}$ and $\Phi(q+r)$  defined by \eqref{scale_function_laplace} and \eqref{def_varphi}, respectively, with $q$ replaced with $q+r$.  Note that 
		\begin{align}
		\Phi(q+r) > \Phi(q), \quad r > 0 \label{big_phi_monotonicity}
		\end{align}
		 and  from identity (5) in \cite{LRZ} 
	\begin{align}
		W^{(q+r)}(x)-W^{(q)}(x)=r\int_0^xW^{(q+r)}(u)W^{(q)}(x-u) \diff u, \quad x \in \R. \label{W_q_W_q_r_relation}
	\end{align}
		
		We also define, for $q, r > 0$ and $x \in \R$,
		\begin{align}
			Z^{(q)}(x,\Phi(q+r)) &:=e^{\Phi(q+r) x} \left( 1 -r \int_0^{x} e^{-\Phi(q+r) z} W^{(q)}(z) \diff z	\right) \notag \\
			&=r \int_0^{\infty} e^{-\Phi(q+r) z} W^{(q)}(z+x) \diff z	 > 0, \label{z1}\end{align}
			where the second equality holds because \eqref{scale_function_laplace} gives $\int_0^\infty  \mathrm{e}^{-\Phi(q+r) x} W^{(q)}(x) \diff x = r^{-1}$.
			By differentiating this with respect to the first argument,
					\begin{align}
			Z^{(q) \prime}(x,\Phi(q+r)) &:= \frac \partial {\partial x}Z^{(q)}(x,\Phi(q+r))  = \Phi(q+r) Z^{(q)}(x,\Phi(q+r))	- r W^{(q)}(x), \quad x > 0. \label{Z_prime_Phi}
			\end{align}

		
		
		
		The following results, related to the computation of the expected NPV under a periodic barrier strategy at the level $b$, are immediate applications of  Corollary 3.1 (ii) in \cite{PYM}. Note that only the case $b > 0$ is covered in Corollary 3.1 (ii) in \cite{PYM}, but can be extended to the case $b=0$ by monotone convergence by taking a decreasing sequence of down-crossing times.
		\begin{lemma} For all $b \geq 0$ and $x \in \R$, 
			\begin{align}\label{vf3}
			\begin{split}
				v_b(x)&=r \frac{ W^{(q)}(x)+r\int_0^{x-b}W^{(q+r)}(x-b-y)W^{(q)}(y+b) \diff y -r  W^{(q)}(b)  \overline{W}^{(q+r)}(x-b) }{\Phi(q+r)Z^{(q)\prime}(b,\Phi(q+r))} \\ &-r\overline{\overline{W}}^{(q+r)}(x-b).
				\end{split}
			\end{align}
			
			
			
		\end{lemma}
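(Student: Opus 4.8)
The plan is to obtain \eqref{vf3} directly from the fluctuation theory of the spectrally negative \lev process with Parisian reflection above, since the controlled process $U_r^b$ is precisely that process with reflection level $b$, and $v_b$ in \eqref{vf} is exactly the expected $q$-discounted total Parisian reflection $L_r^b$ accumulated up to the continuously-monitored ruin time $\sigma_0^b$.

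For $b>0$ I would first rewrite $v_b$ as a resolvent functional so that Corollary 3.1(ii) of \cite{PYM} applies off the shelf. Since the reflection epochs $(T_b^+(i))_i$ are exactly those Poisson$(r)$ marks at which $U_r^b$ sits strictly above $b$, the compensation formula for $N^r$ (whose predictable compensator is $r\,\diff s$) gives
\begin{align*}
v_b(x) = \E_x\Big(\int_{[0,\sigma_0^b]} e^{-qs}\big(U_r^b(s-)-b\big)^+\diff N^r(s)\Big) = r\,\E_x\Big(\int_0^{\sigma_0^b} e^{-qs}\big(U_r^b(s)-b\big)^+\diff s\Big),
\end{align*}
which is of the form covered by the $q$-resolvent measure of the Parisian-reflected process computed in \cite{PYM}. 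Substituting that resolvent and integrating in the space variable over $(b,\infty)$ produces a linear combination of $W^{(q)}(x)$, the convolution $\int_0^{x-b}W^{(q+r)}(x-b-y)W^{(q)}(y+b)\diff y$, and the iterated integrals $\overline W^{(q+r)}(x-b)$, $\overline{\overline W}^{(q+r)}(x-b)$; the discount level $q+r$ appears because an excursion of $U_r^b$ above $b$ is terminated by the next Poisson mark, i.e.\ carries an extra independent exponential clock of rate $r$. It then remains to identify the denominator with $\Phi(q+r)Z^{(q)\prime}(b,\Phi(q+r))$ using \eqref{z1} and \eqref{Z_prime_Phi} (and to note this quantity is strictly positive). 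A convenient internal check is that at $b=0$ the identity \eqref{W_q_W_q_r_relation} collapses the bracketed numerator to $W^{(q+r)}(x)-W^{(q)}(x)+W^{(q)}(0)\big(1-r\overline W^{(q+r)}(x)\big)$. The bookkeeping of the boundary terms at $x=b$ and of the convolution term is the most error-prone part, but it is mechanical.

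The case $b=0$ is outside the scope of Corollary 3.1(ii) of \cite{PYM}, and I would reach it by a limiting argument along a strictly decreasing sequence $b_n\downarrow 0$. On the probabilistic side, the reflection epochs and reflected amounts of $U_r^{b_n}$ converge, the killed discounted reflection integrals converge monotonically, and hence $v_{b_n}(x)\to v_0(x)$ by monotone convergence (alternatively one dominates the discounted reflection by an integrable envelope and uses dominated convergence). On the analytic side, the right-hand side of \eqref{vf3} with $b$ replaced by $b_n$ is continuous in the barrier level: all the scale-function quantities converge as $b_n\downarrow 0$ (using $\overline W^{(q+r)}(0)=\overline{\overline W}^{(q+r)}(0)=0$ and the near-zero behaviour \eqref{eq:Wqp0}), and the denominator $\Phi(q+r)Z^{(q)\prime}(b_n,\Phi(q+r))$ stays bounded away from $0$. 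Equating the two limits gives \eqref{vf3} at $b=0$.

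The main obstacle is exactly this $b=0$ passage: lowering the barrier simultaneously decreases the post-reflection surplus (tending to reduce future dividends and hasten ruin) and increases each lump payment $U_r^b(T_b^+(i)-)-b$, so monotonicity of $b\mapsto v_b$ near $0$ is not transparent and has to be justified with care — or bypassed by a domination argument. Everything else is either a citation of \cite{PYM} or routine manipulation of scale functions.
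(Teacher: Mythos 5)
Your argument is correct, and for $b>0$ it is the same as the paper's: both reduce the claim to Corollary 3.1(ii) of \cite{PYM} (you re-derive that corollary from the resolvent of the Parisian-reflected process via the compensation formula, the paper simply quotes the dividend identity, but it is the same source and the same computation). The only genuine difference is the passage to $b=0$. You send the barrier $b_n\downarrow 0$ with the ruin level fixed at $0$; as you yourself observe, $b\mapsto v_b$ has no obvious monotonicity near $0$, so this route must be completed by the domination argument you sketch --- which does close: $\int_{[0,\infty)}e^{-qt}\,\diff L_r^{b_n}(t)\le q\int_0^\infty e^{-qt}\,\overline X(t)\,\diff t$ is an integrable envelope uniform in $n$, and the reflection epochs and amounts converge pathwise since a.s.\ no Poisson mark finds $X$ exactly at level $0$. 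The paper instead keeps the barrier at $0$ and takes a decreasing sequence of down-crossing (ruin) times, i.e.\ ruin levels increasing to $0$; after a spatial shift this is again the $b=\epsilon_n>0$ case of \cite{PYM}, and since the strategy is then fixed and only the upper limit of integration in $\int_{[0,\cdot]}e^{-qt}\,\diff L_r^{0}(t)$ shrinks, monotone convergence is immediate and the non-monotonicity you flag never arises. Both routes are valid; the paper's choice of what to vary in the limit is precisely what makes the monotone-convergence step trivial.
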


		It is noted that the expression \eqref{vf3} also hold for $-\infty < x \leq b$ with 
		\begin{align*}
			v_b(x) &=\frac{r}{\Phi(q+r)}\frac{W^{(q)}(x)}{Z^{(q)\prime}(b,\Phi(q+r))}. 
			\end{align*}
			Moreover, for the case $b = 0$, by \eqref{W_q_W_q_r_relation}
			the expression \eqref{vf3} is simplified as follows:
			\begin{align} \label{vf0}
				v_0(x)&=\frac{r \big( W^{(q+r)}(x)-r  W^{(q)}(0)  \overline{W}^{(q+r)}(x) \big)}{\Phi(q+r)(\Phi(q+r)-r W^{(q)}(0)) }-r\overline{\overline{W}}^{(q+r)}(x).
			\end{align}
			
\subsection{Completely monotone case} 
In the remainder of the paper, we assume the following.
\begin{assump} \label{assump_completely_monotone} 
The \lev measure $\overline{\Pi}$ of the dual process $-X$ has a completely monotone density.  That is, $\overline{\Pi}$ has a density $\pi$ whose $n^{th}$ derivative $ \pi^{(n)}$ exists for all $n \geq 1$ and satisfies
\begin{align*}
(-1)^n \pi^{(n)} (x) \geq 0, \quad x > 0.
\end{align*}
\end{assump}
This assumption is known to be a sufficient condition of optimality for the classical spectrally negative case by Loeffen \cite{L} and for the absolutely continuous case by Loeffen et al. \cite{KLP}.


\begin{remark} \label{remark_scale_function_completely_monotone}
Under Assumption \ref{assump_completely_monotone}, we have the following.\begin{enumerate}
\item  As in Theorem 2 of \cite{L2}, 
the scale function $W^{(q)}(x)$ is infinitely differentiable and can be written as \[
	W^{(q)}(x)=\Phi'(q)e^{\Phi(q)x}-\int_0^\infty e^{-x t} \mu^{(q)}(\diff t),\qquad x>0,
	\]
	for some finite measure $\mu^{(q)}$. 
\item As in the proof of Theorem 3 of \cite{L}, we have $W^{(q)\prime \prime \prime}(x) > 0$ for all $x > 0$, and hence there exists $\bar{b} \in [0, \infty)$
 such that $W^{(q)\prime \prime} < 0$ on $(0, \bar{b})$ and $W^{(q)\prime \prime} > 0$ on $(\bar{b}, \infty)$. 
\item As shown in Loeffen \cite{L}, the optimal solution for the classical case is to reflect (in the classical sense) from above at $\bar{b}$; the value function is given by
	\begin{equation} \label{classical_value_function}
		\bar{v}(x) := \sup_{\pi \in \mathcal{A}_\infty}\mathbb{E}_x \Big( \int_{[0,\sigma_0^{\pi}]} e^{-q t} \diff L^{\pi}(t)\Big) = \left\{ \begin{array}{ll}\frac {W^{(q)}(x)} {W^{(q) \prime}(\bar{b})} &  x \leq \bar{b}, \\ \frac {W^{(q)}(\bar{b})} {W^{(q) \prime}(\bar{b})} + x-\bar{b} & x > \bar{b},  \end{array} \right.
	\end{equation}
where $\mathcal{A}_\infty$ is the set of  nondecreasing, right-continuous, and $\mathbb{F}$-adapted processes, as a relaxation of $\mathcal{A}_r$.

\end{enumerate} 
\end{remark}
			
	\section{Selection of optimal barrier $b^*$}\label{sf_d} 

In this section, we focus on the above-mentioned periodic barrier strategy and choose a candidate barrier $b^*$, and show its existence.
	
	\subsection{Smooth fit}
	Motivated by many relevant studies, we choose the barrier $b^*$ so that \emph{the degree of smoothness of $v_{b^*}$ at $b^*$ increases by one} (if $b^* > 0$).  Unlike the classical model in \cite{APP2007} and \cite{L}, we see in our model that $v_{b^*}$ becomes $C^2 (0, \infty)$ (resp.\ $C^3 (0, \infty)$) for the case $X$ is of bounded (resp.\ unbounded) variation. 
	
	Here, we shall show that the desired smoothness at $b > 0$ is satisfied on condition that 
	\begin{equation}\label{opt_thres}
	\mathfrak{C}_b: W^{(q)\prime}(b)=\frac{\Phi(q+r)}{r}Z^{(q)\prime}(b,\Phi(q+r)),
	\end{equation}
	where $Z^{(q)\prime}(b,\Phi(q+r))$ is given as in \eqref{Z_prime_Phi}.
	
To this end, we first compute the derivatives of \eqref{vf3}. Recall the smoothness of the scale function as in Remark \ref{remark_scale_function_completely_monotone}.
	\begin{lemma}
		For $b > 0$ and  $x \in (0, \infty)\backslash \{b\}$,  
		\begin{align}
			v_b^{\prime}(x)
			&= r\left(\frac{W^{(q) \prime }(x)
				+r \int_0^{x- b} W^{(q+r)}(y)W^{(q)\prime} (x- y) \diff y}{ \Phi(q+ r)   Z^{(q) \prime}(b, \Phi(q + r))}  
			-{\overline{W}}^{(q+r)}(x-b)\right), \label{015} \\
			v_b^{\prime \prime}(x)
			&= r\left(\frac{W^{(q) \prime \prime} (x )
				+r W^{(q + r)}(x - b)W^{(q) \prime}(b) 
				+ r\int_0^{x- b} W^{(q+r)}(y)W^{(q)\prime \prime} (x- y) \diff y }{ \Phi(q+ r)   Z^{(q) \prime}(b, \Phi(q + r))}  
			-{W}^{(q+r)}(x-b) \right) \label{016}, \\
			v_b^{\prime \prime \prime}(x)
			&=\frac{r}{{ \Phi(q+ r)   Z^{(q) \prime}(b, \Phi(q + r))}}\Big{(}
			W^{(q) \prime \prime \prime} (x)
			+r W^{(q + r)\prime}(x - b)W^{(q) \prime}(b) \label{018} \\
			&+rW^{(q+r)}(x-b)W^{(q) \prime \prime} (b) 
			+ r\int_0^{x- b} W^{(q+r)}(y)W^{(q)\prime \prime \prime} (x- y) \diff y\Big{)}
			-rW^{(q+r) \prime}(x - b). \notag
		\end{align}
	\end{lemma}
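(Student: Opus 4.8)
The plan is to differentiate the explicit formula \eqref{vf3} for $v_b$ term by term, keeping track of the two $x$-dependent pieces: the convolution integral $\int_0^{x-b} W^{(q+r)}(x-b-y) W^{(q)}(y+b)\,\diff y$ and the iterated scale functions $\overline{W}^{(q+r)}(x-b)$, $\overline{\overline{W}}^{(q+r)}(x-b)$. The factor $r/(\Phi(q+r)Z^{(q)\prime}(b,\Phi(q+r)))$ is a constant in $x$ (it depends only on $b$), so it simply rides along through every differentiation; the only genuine work is in differentiating the convolution. I would first rewrite the convolution in the more symmetric form $\int_0^{x-b} W^{(q+r)}(y) W^{(q)}(x-y)\,\diff y$ by the change of variable $y \mapsto x-b-y$, since this is the form in which Leibniz's rule is cleanest.

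Concretely, for a convolution $g(x) := \int_0^{x-b} W^{(q+r)}(y) W^{(q)}(x-y)\,\diff y$, Leibniz's rule gives
\begin{align*}
g'(x) = W^{(q+r)}(x-b) W^{(q)}(b) + \int_0^{x-b} W^{(q+r)}(y) W^{(q)\prime}(x-y)\,\diff y.
\end{align*}
Here I would note that the boundary term at the upper limit is $W^{(q+r)}(x-b)\,W^{(q)}(x-(x-b)) = W^{(q+r)}(x-b)\,W^{(q)}(b)$. Differentiating once more, the boundary term contributes $W^{(q+r)\prime}(x-b)W^{(q)}(b) + W^{(q+r)}(x-b)W^{(q)\prime}(b)$ from the product, plus a new boundary term $W^{(q+r)}(x-b)W^{(q)\prime}(b)$ from differentiating under the integral — wait, one must be careful: the cleanest bookkeeping is to differentiate $g'$ directly, yielding $\frac{\diff}{\diff x}\big[W^{(q+r)}(x-b)W^{(q)}(b)\big] + W^{(q+r)}(x-b)W^{(q)\prime}(b) + \int_0^{x-b} W^{(q+r)}(y)W^{(q)\prime\prime}(x-y)\,\diff y$, and similarly for the third derivative. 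Combined with the elementary derivatives $\overline{\overline{W}}^{(q+r)\,\prime}(x-b) = \overline{W}^{(q+r)}(x-b)$, $\overline{W}^{(q+r)\,\prime}(x-b) = W^{(q+r)}(x-b)$, and $W^{(q)}(b)$-cancellations that occur because the $W^{(q)}(x)$ term in \eqref{vf3} and the boundary term from $g'$ combine, one collects \eqref{015}, \eqref{016}, \eqref{018}.

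The only point requiring care — and the main (mild) obstacle — is justifying differentiation under the integral sign and handling the behavior at the upper endpoint $y = x-b$, especially when $x \downarrow b$. This is where Assumption \ref{assump_completely_monotone} and Remark \ref{remark_scale_function_completely_monotone}(1) enter: under the completely monotone assumption, $W^{(q)}$ and $W^{(q+r)}$ are infinitely differentiable on $(0,\infty)$, so all the integrands $W^{(q+r)}(y)W^{(q)\prime}(x-y)$, $W^{(q+r)}(y)W^{(q)\prime\prime}(x-y)$, $W^{(q+r)}(y)W^{(q)\prime\prime\prime}(x-y)$ are continuous on the relevant compact ranges for $x$ in a neighborhood of any point of $(0,\infty)\setminus\{b\}$, and dominated convergence legitimizes the interchange. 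The restriction $x \neq b$ in the statement reflects that $v_b$ need not have the stated number of continuous derivatives across $b$ in general — the whole point of Section \ref{sf_d} is to choose $b = b^*$ so that it does. So I would simply fix $x \in (0,\infty)\setminus\{b\}$, work on a small interval around it avoiding $b$, and carry out the three differentiations, reading off \eqref{015}–\eqref{018} after the $W^{(q)}(b)$ boundary terms telescope with the leading $W^{(q)}$-terms of \eqref{vf3}. No deeper input is needed; this lemma is purely a computational consequence of \eqref{vf3} and smoothness.
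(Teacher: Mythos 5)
Your proposal is correct and follows essentially the same route as the paper: rewrite the convolution as $\int_0^{x-b}W^{(q+r)}(y)W^{(q)}(x-y)\,\diff y$, differentiate under the integral (justified by dominated convergence and the infinite differentiability of the scale functions under Assumption \ref{assump_completely_monotone}), and track the Leibniz boundary terms together with the elementary derivatives of $\overline{W}^{(q+r)}$ and $\overline{\overline{W}}^{(q+r)}$. One small slip in your bookkeeping narrative: the boundary term $rW^{(q+r)}(x-b)W^{(q)}(b)$ from the first differentiation of the convolution cancels against the derivative of the term $-rW^{(q)}(b)\overline{W}^{(q+r)}(x-b)$ in \eqref{vf3}, not against the $W^{(q)}(x)$ term (which simply contributes $W^{(q)\prime}(x)$); the explicit formulas you write for $g'$ and $g''$ are nonetheless correct and yield \eqref{015}--\eqref{018}.
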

	\begin{proof}
		(i)  
		By $\int_0^{x-b } W^{(q + r)} (x- b - y) W^{(q)}(y +b ) \diff y
			= \int_0^{x-b } W^{(q + r)} (y) W^{(q)}(x - y ) \diff y$ and dominated convergence,
		we have 
		\begin{align}  \label{021}
		\begin{split}
			\frac{\partial }{ \partial x}&\left(W^{(q)} (x ) + r \int_0^{x-b } W^{(q + r)} (y) W^{(q)}(x - y ) \diff y -r  W^{(q)}(b)  \overline{W}^{(q+r)}(x-b)\right) \\
			&=W^{(q) \prime } (x ) + 
			r \lim_{\epsilon \rightarrow 0 }
			\frac{\int_0^{x+\epsilon-b } W^{(q + r)} (y) W^{(q)}(x + \epsilon- y ) \diff y 
				-\int_0^{x-b } W^{(q + r)} (y) W^{(q)}(x - y ) \diff y}{\epsilon} \\
				&-rW^{(q + r)}(x - b)W^{(q)}(b) 
				\\&=W^{(q) \prime } (x )
			+r \int_0^{x- b} W^{(q+r)}(y)W^{(q)\prime} (x- y) \diff y.
			\end{split}
		\end{align}
		Hence, by differentiating \eqref{vf3}, we obtain \eqref{015}. 

		(ii) 
		By differentiating \eqref{021},
		the dominated convergence theorem gives
		\begin{align}
\begin{split}
			\frac{\partial^2}{\partial x^2}&\left(W^{(q)}(x)+r\int_0^{x-b}W^{(q+r)}(x-b-y)W^{(q)}(y+b) \diff y -r  W^{(q)}(b)  \overline{W}^{(q+r)}(x-b)\right) \\
			&= W^{(q) \prime \prime} (x )
			+r W^{(q + r)}(x - b)W^{(q) \prime}(b) 
			+ r\int_0^{x- b} W^{(q+r)}(y)W^{(q)\prime \prime} (x- y) \diff y.
\end{split} \label{028}
		\end{align}
		Hence, by differentiating \eqref{015} and using \eqref{028}, we obtain \eqref{016}. \par
		(iii) 
		By further differentiating \eqref{028}, the dominated convergence theorem gives 
		\begin{align*}
			\frac{\partial^3}{\partial x^3}&\left(W^{(q)}(x)+r\int_0^{x-b}W^{(q+r)}(x-b-y)W^{(q)}(y+b) \diff y -r  W^{(q)}(b)  \overline{W}^{(q+r)}(x-b)\right)\notag\\
			&= W^{(q) \prime \prime \prime} (x )
			+r W^{(q + r)\prime}(x - b)W^{(q) \prime}(b) 
			+rW^{(q+r)}(x-b)W^{(q) \prime \prime} (b) \notag\\
			&~~+ r\int_0^{x- b} W^{(q+r)}(y)W^{(q)\prime \prime \prime} (x- y) \diff y . 
		\end{align*}
		Hence, by using the aforementioned identity and by differentiating \eqref{016}, we obtain \eqref{018}. 
	\end{proof}
	
By the smoothness of the scale function on $\R \backslash \{0\}$ as in Remark \ref{remark_scale_function_completely_monotone}, the derivatives \eqref{015}, \eqref{016}, and \eqref{018} are continuous on $(0, \infty) \backslash \{b\}$. Now, we analyze their continuity at $b$ for the cases of bounded and unbounded variation.
 Recall the behaviors of the scale function around zero as in Remark \ref{remark_smoothness_zero}. 
Based on \eqref{vf3} and \eqref{015} we determine that $v_b$ and $v_b^{\prime}$ are continuous functions on $(0 , \infty)$ regardless of the choice of $b$.  In addition, we note that
\begin{align}\label{cond_smooth_1}
	v_b''(b+)-v_b''(b-)
	=rW^{(q + r)}(0) \left(\frac{r W^{(q) \prime} (b)}{{ \Phi(q+ r)   Z^{(q) \prime}(b, \Phi(q + r))}} -1  \right). 
\end{align}

a) Let us assume that $X$ has bounded variation paths. 
By \eqref{cond_smooth_1} and the fact that $W^{(q+r)}(0)>0$ as in \eqref{eq:Wqp0}, $v''_b$ is continuous on $(0 , \infty)$ if and only if $\mathfrak{C}_b$ in \eqref{opt_thres} holds.
\par
b) On the other hand  let us assume that $X$ has unbounded variation paths. 
Then using, in \eqref{cond_smooth_1},  the fact that $W^{(q+r)}(0)=0$ as in \eqref{eq:Wqp0}, 
$v''_b$
is continuous on $(0 , \infty)$ regardless of the choice of $b$. 
In addition,  if $\mathfrak{C}_b$ is satisfied, we note by \eqref{018} that
\begin{align*}
	v_b^{\prime \prime \prime}(x)
			&=\frac{r}{{ \Phi(q+ r)   Z^{(q) \prime}(b, \Phi(q + r))}}\Big{(}
			W^{(q) \prime \prime \prime} (x) 
			\\&+rW^{(q+r)}(x-b)W^{(q) \prime \prime} (b)+ r\int_0^{x- b} W^{(q+r)}(y)W^{(q)\prime \prime \prime} (x- y) \diff y\Big{)},
			\end{align*}
			and hence $v'''_b$ is continuous on $(0 , \infty)$.
In the following, we summarize the obtained  results.

	\begin{lemma} \label{smooth_fit_prob1}Suppose $b > 0$ satisfies $\mathfrak{C}_b$ in \eqref{opt_thres}. Then, $v_{b}$ is $C^2 (0, \infty)$ for the case $X$ is of bounded variation, while it is $C^3 (0, \infty)$ for the case $X$ is of unbounded variation.
	\end{lemma}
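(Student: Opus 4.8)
The plan is to establish the claimed smoothness by directly examining the one-sided derivatives of $v_b$ at the point $b$, relying on the explicit formulas \eqref{015}, \eqref{016}, \eqref{018} already derived, together with the behavior of the scale functions near zero recorded in Remark \ref{remark_smoothness_zero}. First I would note that the smoothness of $W^{(q)}$ and $W^{(q+r)}$ on $\R\backslash\{0\}$ (Remark \ref{remark_scale_function_completely_monotone}) immediately gives continuity of all three derivatives on $(0,\infty)\backslash\{b\}$, since every term in \eqref{015}--\eqref{018} is a composition, product, or convolution-type integral of functions that are smooth away from the origin and the integrands are never evaluated at a singular point for $x\neq b$. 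So the entire question reduces to matching the left- and right-hand limits at $x=b$.

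Next I would carry out the one-sided limit computation at $b$ for each derivative in turn. For $v_b$ and $v_b'$, using the formula \eqref{vf3} on $(-\infty,b]$ (namely $v_b(x)=\frac{r}{\Phi(q+r)}W^{(q)}(x)/Z^{(q)\prime}(b,\Phi(q+r))$) and \eqref{015} on $(b,\infty)$, one checks that the convolution terms and $\overline W^{(q+r)}(x-b)$ vanish as $x\downarrow b$ because $\overline W^{(q+r)}(0)=0$, so $v_b$ and $v_b'$ are automatically continuous at $b$ irrespective of $b$. The crucial computation is \eqref{cond_smooth_1}: plugging $x\downarrow b$ into \eqref{016} and subtracting the value from the left (where $v_b''(x)=r W^{(q)\prime\prime}(x)/(\Phi(q+r)Z^{(q)\prime}(b,\Phi(q+r)))$, the other terms vanishing), the jump in the second derivative is exactly $rW^{(q+r)}(0)\bigl(\frac{rW^{(q)\prime}(b)}{\Phi(q+r)Z^{(q)\prime}(b,\Phi(q+r))}-1\bigr)$. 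In the bounded variation case $W^{(q+r)}(0)=1/c>0$ by \eqref{eq:Wqp0}, so this jump vanishes if and only if $\mathfrak C_b$ holds; under $\mathfrak C_b$ we get $v_b\in C^2(0,\infty)$, and one should also observe that $v_b'''$ will in general still jump (so $C^2$ is the best one can say). In the unbounded variation case $W^{(q+r)}(0)=0$, so $v_b''$ is continuous at $b$ for free; then one must push to the third derivative: using $\mathfrak C_b$ in \eqref{018} cancels the $W^{(q+r)\prime}(x-b)$ terms (the two surviving contributions $-rW^{(q+r)\prime}(x-b)$ and $+\frac{r^2W^{(q)\prime}(b)}{\Phi(q+r)Z^{(q)\prime}(b,\Phi(q+r))}W^{(q+r)\prime}(x-b)$ combine with coefficient $r(\frac{rW^{(q)\prime}(b)}{\Phi(q+r)Z^{(q)\prime}(b,\Phi(q+r))}-1)=0$), leaving the displayed reduced formula for $v_b'''$ whose remaining terms are all continuous at $b$ because $W^{(q+r)}(0)=0=\overline W^{(q+r)}(0)$; hence $v_b\in C^3(0,\infty)$.

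The main obstacle — really the only delicate point — is the careful bookkeeping in the one-sided limits of \eqref{016} and \eqref{018}: one must correctly track which terms survive as $x\downarrow b$ (those with a bare $W^{(q+r)}(x-b)\to W^{(q+r)}(0)$ factor, or $W^{(q+r)\prime}(x-b)\to W^{(q+r)\prime}(0+)$, the latter possibly infinite in the unbounded-variation case and hence requiring that its coefficient vanish exactly, which is precisely what $\mathfrak C_b$ delivers) versus those carrying $\overline W^{(q+r)}(x-b)$ or an integral over $(0,x-b)$ that vanishes, and confirm that the algebraic identity $\mathfrak C_b$ is exactly the condition making the relevant coefficient zero. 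Since \eqref{cond_smooth_1} and the reduced formula for $v_b'''$ have in fact already been written out in the discussion preceding the lemma, the proof of Lemma \ref{smooth_fit_prob1} is then just a matter of assembling these two observations according to the path-variation dichotomy, and I would present it in two short cases (a) and (b) mirroring that discussion.
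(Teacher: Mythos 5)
Your proposal is correct and follows essentially the same route as the paper: continuity away from $b$ from the smoothness of the scale functions, the jump formula \eqref{cond_smooth_1} for $v_b''$ combined with the dichotomy $W^{(q+r)}(0)>0$ versus $W^{(q+r)}(0)=0$ from \eqref{eq:Wqp0}, and in the unbounded-variation case the cancellation of the $W^{(q+r)\prime}(x-b)$ terms in \eqref{018} under $\mathfrak{C}_b$, leaving an expression continuous at $b$. This matches the paper's two-case discussion preceding the lemma.
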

	
	\begin{remark} The verification lemma (Lemma \ref{verificationlemma}) requires only the $C^1 (0, \infty)$ and $C^2 (0, \infty)$ conditions for the cases of bounded and unbounded 
	variation, respectively. The extra smoothness obtained in Lemma \ref{smooth_fit_prob1} will not be directly used for the proof of optimality.  
		 However, it will be shown in Proposition  \ref{prop_slope_condition}, that the barrier selected by this smoothness criteria satisfies the desired slope condition of the value function.
	\end{remark}

\subsection{Existence of $b^*$} 

Here we pursue the existence of $b^* > 0$ such that the condition $\mathfrak{C}_b$ for $b = b^*$ holds.


\par Define, for $b > 0$,
\begin{align} \label{h_b_def}
\begin{split}
	h(b) &:=  e^{-\Phi(q+r) b}\left[r W^{(q) \prime} (b) - \Phi(q+ r)   Z^{(q) \prime}(b, \Phi(q + r)) \right] \\
	&=e^{-\Phi(q+r) b}\left[r W^{(q) \prime} (b)- \Phi(q+ r)\left\{ \Phi(q+ r)   Z^{(q)}(b, \Phi(q + r))
		- r W^{(q) } (b)\right\}\right], 
		\end{split}
\end{align}
with its initial value 
\begin{align*}
h(0+)&=r [W^{(q)\prime}(0+)+ \Phi(q+r)W^{(q)}(0)]-\Phi^2(q+r).
\end{align*}
It is clear that $\mathfrak{C}_b$ is satisfied if and only if $h(b) = 0$.

By differentiating \eqref{h_b_def}, we get
\begin{align}
	h'(b)
	= r e^{-\Phi (q+r)b} W^{(q) \prime  \prime} (b), \quad b > 0. \label{038}
\end{align}
In addition, by \eqref{W_q_limit}, \eqref{big_phi_monotonicity}, and \eqref{z1} and because $\int_0^{b} e^{-\Phi(q+r) z} W^{(q)}(z) \diff z \xrightarrow{b \uparrow \infty} \int_0^{\infty} e^{-\Phi(q+r) z} W^{(q)}(z) \diff z = r^{-1}$ in view of \eqref{scale_function_laplace}, 
\begin{align}\label{ot_cond}
\begin{split}
	h(b) 
	&=r e^{-\Phi (q + r) b}  W^{(q) \prime} (b)- \Phi(q+ r)\left(\Phi(q+ r)   \left( 1 -r \int_0^{b} e^{-\Phi(q+r) z} W^{(q)}(z) \diff z	\right)  
		- r e^{-\Phi (q + r) b} W^{(q) } (b)\right) \\
		&\xrightarrow{b \uparrow \infty} 0.
		\end{split}
\end{align}

Hence, we can write
\begin{equation}
h(b)=-r\int_b^\infty e^{-\Phi(q+r) y} W^{(q)\prime \prime} (y) \diff y, \quad b > 0.\label{039}
\end{equation}
Based on \eqref{038} and \eqref{039} and Remark \ref{remark_scale_function_completely_monotone} (2), the function 
$h$
decreases on $(0, \bar{b}$) with $h(\bar{b})<0$. It then increases on $(\bar{b},\infty)$ and converges to $0$. (See the plots in Figure \ref{figure_h} in Section \ref{section_numerics}.)


The above argument and the continuity of $h(b)$ imply that there exists $0 < b^*<\bar{b}$ such that $h(b^*)=0$ (or equivalently $\mathfrak{C}_b$ for $b = b^*$) if and only if 
\begin{align} \label{b_zero_criteria}
h(0+)>0 \Longleftrightarrow r \Big(W^{(q)\prime}(0 +)+\Phi(q+r) W^{(q)}(0) \Big)>\Phi^2(q+r).
\end{align}
For the case $h(0+) \leq 0$, we set $b^* = 0$.

We summarize the results in the following proposition.
\begin{proposition} \label{prop_b_existence}
(i) If \eqref{b_zero_criteria} holds, then  there exists 
\begin{align}
0 < b^* < \bar{b} \label{b_star_b_bar}
\end{align}
 such that $\mathfrak{C}_b$ for $b = b^*$ is satisfied and $h(b) \geq 0$ if and only if $b \in (0,b^*]$.
 
(ii) Otherwise, $h(b) \leq 0$ for all $b > 0$.
\end{proposition}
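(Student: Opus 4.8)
The plan is to read off everything from the three facts assembled just before the statement: the derivative identity \eqref{038}, the vanishing limit \eqref{ot_cond} (equivalently the integral representation \eqref{039}), and the two-phase sign pattern of $W^{(q)\prime\prime}$ recorded in Remark \ref{remark_scale_function_completely_monotone}(2). First I would note that $h$ is $C^1$ on $(0,\infty)$, which is immediate from the smoothness of $W^{(q)}$, and then pin down the shape of $h$: by \eqref{038} the sign of $h'(b)$ agrees with that of $W^{(q)\prime\prime}(b)$, so $h$ is strictly decreasing on $(0,\bar b)$ and strictly increasing on $(\bar b,\infty)$; and by \eqref{ot_cond}, $h(b)\to 0$ as $b\uparrow\infty$. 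A strictly increasing function on $(\bar b,\infty)$ tending to $0$ is strictly negative there, so $h(b)<0$ for every $b\ge\bar b$. In the degenerate case $\bar b=0$ this already gives $h<0$ on all of $(0,\infty)$, hence $h(0+)\le 0$, so \eqref{b_zero_criteria} fails and only part (ii) is in force (and holds); from here on I may therefore assume $\bar b>0$.

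For part (i), assume \eqref{b_zero_criteria}, i.e.\ $h(0+)>0$ (possibly $+\infty$, when $W^{(q)\prime}(0+)=\infty$). On $(0,\bar b)$ the function $h$ is continuous and strictly decreasing, with $\lim_{b\downarrow 0}h(b)=h(0+)>0$ and $h(\bar b)<0$, so the intermediate value theorem produces a point $b^*\in(0,\bar b)$ with $h(b^*)=0$, unique by strict monotonicity; this is \eqref{b_star_b_bar}, and it is equivalent to $\mathfrak{C}_{b^*}$. Strict decrease on $(0,\bar b)$ gives $h>0$ on $(0,b^*)$ and $h<0$ on $(b^*,\bar b)$, and combining with $h<0$ on $[\bar b,\infty)$ yields $h(b)\ge 0$ precisely for $b\in(0,b^*]$. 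For part (ii), assume instead $h(0+)\le 0$; strict decrease of $h$ on $(0,\bar b)$ forces $h(b)<h(0+)\le 0$ for each $b\in(0,\bar b)$, and we already have $h<0$ on $[\bar b,\infty)$, so $h(b)\le 0$ for all $b>0$.

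I do not expect a genuine obstacle here: the entire argument is a one-variable monotonicity-plus-limit analysis of $h$, and the only nontrivial input—namely that $W^{(q)\prime\prime}$ is negative then positive, with the single crossing at $\bar b$—is supplied by Remark \ref{remark_scale_function_completely_monotone}(2), which is in turn a consequence of Assumption \ref{assump_completely_monotone}. The two points that merely require care are keeping the degenerate case $\bar b=0$ consistent with the dichotomy in \eqref{b_zero_criteria}, and recording that $h(0+)$ is well defined and equals the stated expression, which follows from the near-zero behaviour of $W^{(q)}$ in Remark \ref{remark_smoothness_zero} (with $h(0+)=+\infty$ allowed and harmless). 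I would therefore present the proof as a brief case split on the sign of $h(0+)$, preceded by the short paragraph fixing the monotonicity picture of $h$.
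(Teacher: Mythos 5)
Your proof is correct and follows essentially the same route as the paper: the sign of $h'$ from \eqref{038}, the limit $h(b)\to 0$ from \eqref{ot_cond}/\eqref{039}, the single sign change of $W^{(q)\prime\prime}$ at $\bar b$ from Remark \ref{remark_scale_function_completely_monotone}(2), and then the intermediate value theorem on $(0,\bar b)$. Your explicit handling of the degenerate case $\bar b=0$ (showing it forces $h(0+)\le 0$, so only part (ii) applies) is a small tidy addition that the paper leaves implicit.
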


\begin{remark} \label{remark_cond_b_zero}
By using \eqref{eq:Wqp0}, $b^* = 0$ if and only if one of the following holds:
\begin{itemize}
	\item[(i)] $\eta>0$ and $\displaystyle r\leq \frac{\eta^2}{2}\Phi^2(q+r)$ or
	\item[(ii)] $\eta=0$, $\Pi(-\infty,0)<\infty$, and $\displaystyle\Phi^2(q+r)\geq r\left(\frac{q+\Pi(-\infty,0)}{c^2}+\frac{\Phi(q+r)}{c}\right)$.
	\end{itemize}
\end{remark}
	
\begin{remark}  
	In view of (i) of Remark \ref{remark_cond_b_zero} and by using \eqref{lk},
\begin{align*}
r - \frac{\eta^2}{2}\Phi^2(q+r) = -q + \Phi(q+r) \gamma  +\int_{(-\infty,0)} (e^{\Phi(q+r) z}-1-\Phi(q+r) z 1_{\{z > -1\}}) \Pi(\diff z).
\end{align*}
For the case $\eta > 0$, we have $\Phi(q+r) \sim \sqrt{2r} /\eta$ as $r \rightarrow \infty$.  Hence,
\begin{align*}
\frac {\eta} {\sqrt{2r}} \Big(r - \frac{\eta^2}{2}\Phi^2(q+r) \Big) &= \frac {\eta} {\sqrt{2r}}  \Big(-q + \Phi(q+r) \gamma +\int_{(-\infty,0)} (e^{\Phi(q+r) z}-1-\Phi(q+r) z 1_{\{z > -1\}}) \Pi(\diff z)\Big) \\
 &\xrightarrow{r \uparrow \infty}  \gamma -\int_{(-\infty,0)} z1_{\{z > -1\}} \Pi(\diff z)=c.
\end{align*}
Hence, we obtain  $b^*>0$ for a large enough $r$, if $c>0$. 
This is consistent with 
the classical case as given in Theorem 1 in \cite{L2}. 
\end{remark}


	\section{Verification of optimality}
	\label{exist_thres}
	With $b^* \geq 0$ defined above, we now show the optimality of the obtained periodic barrier strategy 
	$\pi^{b^*}$.
	
	For the case $b^* > 0$, because $\mathfrak{C}_b$ holds for $b = b^*$, the expected NPV \eqref{vf3} can be succinctly written as
	\begin{align}\label{vf5}
	\begin{split}
		v_{b^*}(x)&=\frac{W^{(q)}(x)+r\int_0^{x-b^*}W^{(q+r)}(x-b^*-y)W^{(q)}(y+b^*) \diff y -rW^{(q)}(b^*)\overline{W}^{(q+r)}(x-b^*)}{W^{(q)\prime}(b^*)} \\&-r\overline{\overline{W}}^{(q+r)}(x-b^*),\qquad\text{for $x \in \R$},
		\end{split}
	\end{align}
where, in particular, for $x<b^*$,
	\begin{align}
	v_{b^*}(x)=\frac{W^{(q)}(x)}{W^{(q)\prime}(b^*)}. \label{value_function_below_b_star}
	\end{align}
	
	In contrast, for the case $b^*=0$, the expected NPV is given by \eqref{vf0}.

	\subsection{Verification lemma}
	
		Let $\mathcal{L}$ be the infinitesimal generator associated with
	the process $X$ applied to a $C^1$ (resp.\ $C^2$) function $f$ for the case $X$ is of bounded (resp.\ unbounded) variation:
	\begin{align} \label{generator}
		\mathcal{L} f(x) &:= \gamma f'(x) + \frac 1 2 \eta^2 f''(x) + \int_{(-\infty,0)} \left[ f(x+z) - f(x) -  f'(x) z 1_{\{-1 < z < 0\}} \right] \Pi(\diff z), \quad x  >0.
	\end{align}
	
We now provide a verification lemma. 
The proof is essentially the same as  Lemma 4.3 in \cite{PY} (which deals with the spectrally positive case with a terminal payoff/penalty), and is hence omitted.
	\begin{lemma}[Verification lemma]
	\label{verificationlemma}
	Suppose $\hat{\pi} \in \mathcal{A}$ is such that $v_{\hat{\pi}}$ is $C^1(0, \infty)$ (resp.\ $C^2(0, \infty)$)  for the case $X$ is of bounded (resp.\ unbounded) variation, 
	and satisfies
	\begin{align}
		\label{HJB-inequality}
		(\mathcal{L} - q)v_{\hat{\pi}}(x)+r\max_{0\leq l\leq x}\{l+v_{\hat{\pi}}(x-l)-v_{\hat{\pi}}(x)\}\leq 0,  \quad x > 0.
	\end{align} 
	Then, $v_{\hat{\pi}}(x)=v(x)$ for all $x\geq0$, and hence $\hat{\pi}$ is an optimal strategy.
\end{lemma}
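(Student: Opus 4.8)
The plan is to establish the verification lemma by a now-standard martingale argument, following the structure of Lemma 4.3 in \cite{PY} but keeping track of the features specific to the spectrally negative setting and the Poissonian dividend-decision times. Fix an admissible $\hat\pi\in\mathcal A_r$ satisfying the regularity and the inequality \eqref{HJB-inequality}, and let $\pi\in\mathcal A_r$ be an arbitrary competing strategy with controlled surplus $U^\pi$ and ruin time $\sigma_0^\pi$. The first step is to show $v_{\hat\pi}\ge v_\pi$ for every such $\pi$, which yields $v_{\hat\pi}\ge v$; the reverse inequality $v_{\hat\pi}\le v$ is immediate since $v_{\hat\pi}=v_\pi$ with $\pi=\hat\pi\in\mathcal A_r$.

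For the main inequality I would apply the Itô--Meyer change-of-variables formula to the process $t\mapsto e^{-qt}v_{\hat\pi}(U^\pi(t))$ up to $\sigma_0^\pi$ (after a localization to keep $U^\pi$ bounded and to control the jump integrals, removing the localizing sequence at the end by monotone/dominated convergence together with the linear growth of $v_{\hat\pi}$). Between the Poissonian dividend times the surplus moves like $X$, so the continuous part of the decomposition produces the term $\int_0^{\cdot} e^{-qs}(\mathcal L-q)v_{\hat\pi}(U^\pi(s))\,\diff s$ plus a local martingale; here one uses that $v_{\hat\pi}$ is $C^1$ (bounded variation case) or $C^2$ (unbounded variation case) so that $\mathcal L v_{\hat\pi}$ is well defined, and that the negative jumps of $X$ never push the process above a region where $v_{\hat\pi}$ fails to be smooth. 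At each dividend time $T(i)$ a lump payment $\nu^\pi(T(i))=:l$ with $0\le l\le U^\pi(T(i)-)$ is made, contributing $e^{-qT(i)}\bigl(v_{\hat\pi}(U^\pi(T(i)-)-l)-v_{\hat\pi}(U^\pi(T(i)-))\bigr)$ to the decomposition; compensating the Poisson process $N^r$ at rate $r$ turns the sum of these contributions, plus the paid dividends $\int e^{-qs}\,\diff L^\pi(s)=\sum e^{-qT(i)}l$, into a local martingale plus the drift term $\int_0^{\cdot} e^{-qs}\,r\bigl(l_s+v_{\hat\pi}(U^\pi(s-)-l_s)-v_{\hat\pi}(U^\pi(s-))\bigr)\,\diff s$ where $l_s=\nu^\pi(s)$. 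Adding the two drift contributions and invoking \eqref{HJB-inequality} (with the $\max$ dominating the actual choice $l_s$) shows that $e^{-qt}v_{\hat\pi}(U^\pi(t))+\int_0^t e^{-qs}\,\diff L^\pi(s)$ is a supermartingale up to $\sigma_0^\pi$. Taking expectations, using $v_{\hat\pi}\ge 0$, letting $t\to\infty$, and noting $v_{\hat\pi}(U^\pi(\sigma_0^\pi))=0$ on $\{\sigma_0^\pi<\infty\}$ because $U^\pi$ jumps to $(-\infty,0)$ at ruin where $v_{\hat\pi}$ vanishes (here \eqref{z_below_zero}-type behavior of the scale functions enters, i.e. $v_{\hat\pi}\equiv 0$ below $0$), gives $v_{\hat\pi}(x)\ge v_\pi(x)$.

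The step I expect to be the main obstacle is the justification of the stochastic-calculus manipulations at the level of rigor required: namely, that the local martingale terms (the continuous martingale from the Gaussian part, and the compensated jump integrals from both $\Pi$ and $N^r$) are genuine martingales after localization, that the change-of-variables formula applies despite $v_{\hat\pi}$ being only $C^1$ in the bounded-variation case (which is the correct regularity there, since $X$ then has finite-variation paths and no Gaussian component), and that the limit removing the localization is legitimate — this is where the linear growth of $v_{\hat\pi}$ (inherited from $\overline{\overline W}^{(q+r)}$ growing at most linearly, cf.\ \eqref{vf3}) and integrability of $\int e^{-qs}\,\diff L^\pi(s)$ are used. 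Since all of these points are handled in detail in the proof of Lemma 4.3 of \cite{PY} in the closely analogous spectrally positive setting — the only genuine difference being the direction of the jumps, which here only helps because ruin occurs by a downward jump into the region where $v_{\hat\pi}=0$ rather than by creeping — I would simply verify that each ingredient transfers and refer to \cite{PY} for the routine details, which is exactly what the statement preceding the lemma already does.
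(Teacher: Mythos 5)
Your proposal is the standard supermartingale verification argument (It\^o--Meyer decomposition, compensation of $N^r$, the HJB inequality dominating the actual dividend choice, localization and nonnegativity of $v_{\hat\pi}$ to pass to the limit), which is precisely the argument of Lemma 4.3 in \cite{PY} that the paper itself invokes while omitting the details. So your approach is essentially the same as the paper's; the only minor imprecision is the remark that ruin cannot occur by creeping --- when $\eta>0$ the spectrally negative process can creep below $0$ --- but this is harmless since the argument only uses $v_{\hat\pi}\ge 0$.
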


	
	Note that $W^{(q)}$ is infinitely differentiable on $(0, \infty)$, as pointed out in Remark \ref{remark_scale_function_completely_monotone}. By the proof of Lemma 4 of \cite{APP2007}, for any $B > 0$, $(e^{-q (t \wedge \tau^-_{0} \wedge \tau^+_B)} W^{(q)}(X(t \wedge \tau^-_{0} \wedge \tau^+_B)); t \geq 0)$
	is a martingale, and hence
	\begin{align}
	(\mathcal{L}-q)W^{(q)}(y)=0, \quad y > 0. \label{W_harmonic}
	\end{align}
	Similarly, by Proposition 2 of \cite{APP2007},
	\begin{align}
	(\mathcal{L}-q) Z^{(q)}(y)
= 0, \quad y > 0. \label{martingale_Z_R}
	\end{align}
	Note that these identities also hold when $q$ is replaced with $q+r$.
	By using these identities, we show the following. 
	\begin{lemma} For $y > 0$,
	\begin{align}
	(\mathcal{L}-q) W^{(q+r)}(y)  &=  r W^{(q+r)}(y), \label{W_harmonic_q_r} \\
	\label{v_2}
	(\mathcal{L}-q)\overline{W}^{(q+r)}(y) &=1+r\overline{W}^{(q+r)}(y), \\
	\label{v_3}
	(\mathcal{L}-q)\overline{\overline{W}}^{(q+r)} (y)
	&=y +r\overline{\overline{W}}^{(q+r)}(y).
	\end{align}
	\end{lemma}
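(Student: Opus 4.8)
The plan is to derive each of the three identities \eqref{W_harmonic_q_r}, \eqref{v_2}, \eqref{v_3} from the already-established facts \eqref{W_harmonic} and \eqref{martingale_Z_R} (and their $q+r$ analogues), together with the convolution relation \eqref{W_q_W_q_r_relation}. The only genuinely analytic point is to justify interchanging the operator $\mathcal{L}$ — which involves a derivative (or two) plus an integral against $\Pi$ — with the integral $\int_0^x$ appearing in the definitions of $\overline{W}^{(q+r)}$, $\overline{\overline{W}}^{(q+r)}$, and in \eqref{W_q_W_q_r_relation}. Since all scale functions in play are infinitely differentiable on $(0,\infty)$ under Assumption \ref{assump_completely_monotone} and grow at most exponentially (by \eqref{W_q_limit} and Remark \ref{remark_scale_function_completely_monotone}(1)), dominated convergence applies on a neighbourhood of each fixed $y>0$, and this interchange is legitimate; I would simply flag it rather than belabour it.

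For \eqref{W_harmonic_q_r}: apply $(\mathcal{L}-q)$ to both sides of \eqref{W_q_W_q_r_relation}, i.e.\ write $W^{(q+r)}(y) = W^{(q)}(y) + r\int_0^y W^{(q+r)}(u) W^{(q)}(y-u)\,\diff u$. The first term is killed by \eqref{W_harmonic}. For the convolution term, treat it as a function of $y$ and let $\mathcal{L}$ act through the integral; the key observation is that for each fixed $u$, the map $y\mapsto W^{(q)}(y-u)\mathbbm{1}_{\{y>u\}}$ is (a shift of) a scale function, so $(\mathcal{L}-q)$ applied to it — being translation invariant except for the boundary behaviour at $y=u$ — produces a contribution that, after careful bookkeeping of the boundary term coming from the jump part and the drift, reassembles via a second use of \eqref{W_q_W_q_r_relation} into exactly $r W^{(q+r)}(y)$. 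Equivalently, and perhaps more cleanly, one can bypass the boundary subtleties by recalling the known fact $(\mathcal{L}-(q+r))W^{(q+r)}(y)=0$ on $(0,\infty)$ (the $q+r$ version of \eqref{W_harmonic}), whence $(\mathcal{L}-q)W^{(q+r)}(y) = (\mathcal{L}-(q+r))W^{(q+r)}(y) + rW^{(q+r)}(y) = rW^{(q+r)}(y)$. This last route is immediate and is the one I would actually use.

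For \eqref{v_2} and \eqref{v_3}: here I would integrate rather than differentiate. Note $\overline{W}^{(q+r)}(y) = \int_0^y W^{(q+r)}(u)\,\diff u$ and apply $(\mathcal{L}-q)$. Since $\mathcal{L}$ commutes with $\int_0^y\,\diff u$ up to the boundary terms generated at $u=0$ (the lower limit contributes because $\mathcal{L}$ contains a first-derivative term and an integral term sensitive to the value at the origin), one gets $(\mathcal{L}-q)\overline{W}^{(q+r)}(y) = \int_0^y (\mathcal{L}-q)W^{(q+r)}(u)\,\diff u + (\text{boundary correction})$. Using \eqref{W_harmonic_q_r} the integral becomes $r\int_0^y W^{(q+r)}(u)\,\diff u = r\overline{W}^{(q+r)}(y)$, and the boundary correction must work out to the constant $1$; this can be checked either by the explicit form \eqref{generator} acting on $\overline{W}^{(q+r)}$ directly (using $W^{(q+r)}(0)$, $\overline{W}^{(q+r)}(0)=0$) or, more transparently, by noting $\overline{W}^{(q+r)} = (Z^{(q+r)}-1)/(q+r)$ so that $(\mathcal{L}-q)\overline{W}^{(q+r)} = \tfrac{1}{q+r}[(\mathcal{L}-q)Z^{(q+r)} - (\mathcal{L}-q)1]$; since $(\mathcal{L}-(q+r))Z^{(q+r)}=0$ we have $(\mathcal{L}-q)Z^{(q+r)} = rZ^{(q+r)}$, and $(\mathcal{L}-q)1 = -q$, giving $(\mathcal{L}-q)\overline{W}^{(q+r)} = \tfrac{1}{q+r}(rZ^{(q+r)} + q) = 1 + r\overline{W}^{(q+r)}$ after substituting $Z^{(q+r)} = 1 + (q+r)\overline{W}^{(q+r)}$. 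The identity \eqref{v_3} then follows by the same mechanism applied once more: integrate \eqref{v_2} from $0$ to $y$, using that $(\mathcal{L}-q)$ commutes with $\int_0^y$ up to a boundary term (which here vanishes since $\overline{W}^{(q+r)}(0)=0$ and $\overline{W}^{(q+r)\prime}(0)=W^{(q+r)}(0)$ is handled by the $C^1$/$C^2$ regularity), yielding $(\mathcal{L}-q)\overline{\overline{W}}^{(q+r)}(y) = \int_0^y (1 + r\overline{W}^{(q+r)}(u))\,\diff u = y + r\overline{\overline{W}}^{(q+r)}(y)$.

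The main obstacle is purely bookkeeping: making sure the boundary terms at the origin are accounted for correctly when commuting $\mathcal{L}$ with $\int_0^y$, since $\mathcal{L}$ is not a local operator and its integral part sees the value of the integrand near $0$. Routing everything through the $Z^{(q+r)}$ identities $(\mathcal{L}-(q+r))Z^{(q+r)}=0$ and $(\mathcal{L}-(q+r))W^{(q+r)}=0$ — both of which are already available as the $q+r$ versions of \eqref{W_harmonic} and \eqref{martingale_Z_R} — sidesteps this entirely and reduces all three identities to elementary algebra with the defining relations $Z^{(q+r)} = 1 + (q+r)\overline{W}^{(q+r)}$ and $\overline{Z}^{(q+r)}(y) = y + (q+r)\overline{\overline{W}}^{(q+r)}(y)$, so I would present the proof in that form.
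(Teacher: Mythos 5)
Your proofs of \eqref{W_harmonic_q_r} and \eqref{v_2} are exactly the paper's: write $(\mathcal{L}-q)=(\mathcal{L}-q-r)+r$ and use the $(q+r)$-versions of \eqref{W_harmonic} and \eqref{martingale_Z_R} together with $\overline{W}^{(q+r)}=(Z^{(q+r)}-1)/(q+r)$. For \eqref{v_3} you diverge: you integrate \eqref{v_2} over $(0,y)$ and commute $(\mathcal{L}-q)$ with $\int_0^y$, checking that the boundary contribution (essentially $\tfrac{\eta^2}{2}W^{(q+r)}(0)$, which vanishes in the only case where $\eta>0$) disappears; the paper instead rewrites $\overline{\overline{W}}^{(q+r)}(y)=\int_0^y z\,W^{(q+r)}(y-z)\,\diff z$ by integration by parts and invokes the resolvent-type convolution identity $(\mathcal{L}-q-r)\int_0^y f(z)W^{(q+r)}(y-z)\,\diff z=f(y)$ from the proof of Lemma 4.5 of \cite{EY} (the same lemma it reuses later in Lemma \ref{generator_on_v}). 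Both routes are valid; yours is more self-contained but requires the Fubini/boundary bookkeeping you flag, while the paper's outsources that to a ready-made lemma. One caution on your closing suggestion to present \eqref{v_3} purely through $\overline{Z}^{(q+r)}(y)=y+(q+r)\overline{\overline{W}}^{(q+r)}(y)$: the paper supplies no harmonicity identity for $\overline{Z}^{(q+r)}$, and splitting $(\mathcal{L}-q)\overline{\overline{W}}^{(q+r)}$ into $(\mathcal{L}-q)\overline{Z}^{(q+r)}$ minus $(\mathcal{L}-q)$ applied to the identity function is problematic when $\int_{(-\infty,-1]}|z|\,\Pi(\diff z)=\infty$, since each piece separately involves a divergent integral; so you should stick with your integration argument (or the paper's convolution identity) rather than that algebraic shortcut.
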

	\begin{proof}
	(i) By \eqref{W_harmonic}, we have $(\mathcal{L}-q) W^{(q+r)}(y) = (\mathcal{L}-q-r) W^{(q+r)}(y) + r W^{(q+r)}(y) =  r W^{(q+r)}(y)$. \\
	(ii) By using \eqref{martingale_Z_R}, we have
	\begin{align*}
	(\mathcal{L}-q)\overline{W}^{(q+r)}(y)&=\frac{1}{r+q}(\mathcal{L}-q)\left(Z^{(q+r)}(y)-1\right) =\frac{r}{r+q}Z^{(q+r)}(y)+\frac{q}{r+q} =1+r\overline{W}^{(q+r)}(y).
	\end{align*}
	(iii) 
By integration by parts and  the proof of Lemma 4.5 in \cite{EY}, we have 
\begin{align*}
(\mathcal{L}-q-r)\overline{\overline{W}}^{(q+r)}(y)  = (\mathcal{L}-q-r) \int_0^y z W^{(q+r)}(y-z) \diff  z = y.
\end{align*}
Hence, we have \eqref{v_3}.

	\end{proof}

\begin{lemma}\label{generator_on_v}
		For  $b^* \geq 0$, we have 
		\begin{equation}\label{gen_on_v}
		(\mathcal{L}-q)v_{b^*}(x)=
		\begin{cases} 0 &\mbox{if } x \in(0,b^*], \\ 
		\displaystyle -r\left\{(x-b^*)+v_{b^*}(b^*)-v_{b^*}(x)\right\} & \mbox{if } x \in (b^*,\infty). \end{cases}
		\end{equation}
	\end{lemma}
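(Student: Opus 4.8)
The plan is to verify the two cases of \eqref{gen_on_v} by applying the linear operator $(\mathcal{L}-q)$ directly to the closed-form expressions for $v_{b^*}$ and using the harmonic identities \eqref{W_harmonic}, \eqref{W_harmonic_q_r}, \eqref{v_2}, and \eqref{v_3}. The case $b^*=0$ would be handled separately at the end by applying the same operator to \eqref{vf0}, since there $v_0$ is built only from $W^{(q+r)}$, $\overline W^{(q+r)}$ and $\overline{\overline W}^{(q+r)}$; alternatively one observes that the $b^*=0$ case is the formal limit $b^*\downarrow0$ of the generic formula and that $(0,b^*]$ is then empty. So the substance is the case $b^*>0$.

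For $x\in(0,b^*)$, $v_{b^*}(x) = W^{(q)}(x)/W^{(q)\prime}(b^*)$ by \eqref{value_function_below_b_star}, so $(\mathcal{L}-q)v_{b^*}(x)=0$ is immediate from \eqref{W_harmonic} (and extends to $x=b^*$ by the $C^1$, resp.\ $C^2$, smoothness from Lemma \ref{smooth_fit_prob1} together with continuity of $(\mathcal{L}-q)v_{b^*}$). For $x>b^*$ I would write, from \eqref{vf5},
\begin{align*}
W^{(q)\prime}(b^*)\, v_{b^*}(x) = W^{(q)}(x) + r\!\int_0^{x-b^*}\! W^{(q+r)}(x-b^*-y)W^{(q)}(y+b^*)\,\diff y - rW^{(q)}(b^*)\overline W^{(q+r)}(x-b^*) - W^{(q)\prime}(b^*)\,r\overline{\overline W}^{(q+r)}(x-b^*).
\end{align*}
The terms $-rW^{(q)}(b^*)\overline W^{(q+r)}(x-b^*)$ and $-W^{(q)\prime}(b^*)r\overline{\overline W}^{(q+r)}(x-b^*)$ are treated with \eqref{v_2} and \eqref{v_3}, which produce the polynomial pieces $-rW^{(q)}(b^*)$ and $-W^{(q)\prime}(b^*)r(x-b^*)$ plus $r$ times the original term. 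The genuinely new computation is $(\mathcal{L}-q)$ applied to the convolution term $g(x):=\int_0^{x-b^*}W^{(q+r)}(x-b^*-y)W^{(q)}(y+b^*)\,\diff y$. Here I would use that $(\mathcal{L}-q)$ acting in the variable $x$ on a convolution of the form $\int W^{(q+r)}(x-\cdot)\,\phi(\cdot)$, where $\phi$ is smooth, reduces — after an integration by parts / Fubini argument of the same flavour as in \eqref{W_harmonic_q_r} and the proof of \eqref{v_3} — to $rg(x)$ plus a boundary contribution coming from the endpoint of the convolution, and this boundary term is precisely what combines with $(\mathcal{L}-q)W^{(q)}(x)=0$ and the constant $W^{(q)}(b^*)W^{(q+r)}(0)$-type terms. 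Collecting all pieces, the right-hand side should organise into $r$ times [$W^{(q)\prime}(b^*)v_{b^*}(x)$ minus the bracketed quantity] which, after dividing by $W^{(q)\prime}(b^*)$, is exactly $-r\{(x-b^*)+v_{b^*}(b^*)-v_{b^*}(x)\}$, using $v_{b^*}(b^*)=W^{(q)}(b^*)/W^{(q)\prime}(b^*)$ from \eqref{value_function_below_b_star}.

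The main obstacle is the careful evaluation of $(\mathcal{L}-q)$ on the convolution term $g$: the jump part of $\mathcal{L}$ involves $g(x+z)$ for $z<0$, which may push $x+z$ below $b^*$ (where the integrand's upper limit collapses) or even below $0$, so one must split the $\Pi$-integral accordingly and keep track of the interface at $x=b^*$. This is essentially a bookkeeping exercise — the same kind done implicitly when establishing \eqref{v_2}–\eqref{v_3} and in the references \cite{EY, PYM} — but it is where sign errors and missing boundary terms are most likely. A clean way to organise it is to treat $g$ as $\big(W^{(q+r)} * (W^{(q)}(\cdot+b^*)\mathbf 1_{[0,\infty)})\big)(x-b^*)$ and invoke the known action of $(\mathcal{L}-q-r)$ on convolutions with $W^{(q+r)}$ (which yields the identity function via the resolvent equation \eqref{W_q_W_q_r_relation}), then add back $r g$; this avoids manipulating the jump integral by hand. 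Once the $x>b^*$ identity is in place, continuity at $b^*$ of both sides (left side by Lemma \ref{smooth_fit_prob1}, right side obviously) confirms it there too, completing the proof.
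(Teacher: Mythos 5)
Your proposal is correct and follows essentially the same route as the paper: harmonicity of $W^{(q)}$ below $b^*$, the identities \eqref{v_2}--\eqref{v_3} for the $\overline{W}^{(q+r)}$ and $\overline{\overline{W}}^{(q+r)}$ terms, the fact that $(\mathcal{L}-q-r)$ applied to the convolution $\int_{b^*}^x W^{(q+r)}(x-u)W^{(q)}(u)\,\diff u$ returns $W^{(q)}(x)$ (the paper invokes the proof of Lemma 4.5 in \cite{EY} for exactly this, sidestepping the jump-integral bookkeeping you worry about), a limiting argument at $x=b^*$, and a separate direct computation from \eqref{vf0} when $b^*=0$. The only slip is cosmetic: that convolution identity returns the convolved function $W^{(q)}$, not the identity function.
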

	\begin{proof} (i)  
	Suppose $b^* > 0$.
For $0 < x < b^*$, by \eqref{value_function_below_b_star} and \eqref{W_harmonic}, we have
	\begin{align*}
	(\mathcal{L}-q)v_{b^*}(x)=\frac{1}{W^{(q)\prime}(b^*)}(\mathcal{L}-q)W^{(q)}(x)=0.
	\end{align*}

Now suppose $x > b^*$.
	By the proof of Lemma 4.5 in \cite{EY}, we obtain
	\begin{align*}
	(\mathcal{L}-(q+r))\int_0^{x-b^*}W^{(q+r)}(x-b^*-y)W^{(q)}(y+b^*)\diff y&=(\mathcal{L}-(q+r))\int_{b^*}^{x}W^{(q+r)}(x-u)W^{(q)}(u) \diff u\\&=W^{(q)}(x),
	\end{align*}
	which together with \eqref{W_harmonic} implies
	\begin{align}\label{v_4}
	(\mathcal{L}-q)\Big(W^{(q)}(x)&+r\int_0^{x-b^*}W^{(q+r)}(x-b^*-y)W^{(q)}(y+b^*)\diff y\Big)\notag\\
	&=r\left(W^{(q)}(x)+r\int_0^{x-b^*}W^{(q+r)}(x-b^*-y)W^{(q)}(y+b^*)\diff y\right).
	\end{align}
	Hence, by applying  \eqref{v_2}, \eqref{v_3}, and \eqref{v_4} in \eqref{vf5}, we have
	\begin{align*}
	(\mathcal{L}-q)v_{b^*}(x)&=\frac{r}{W^{(q)\prime}(b^*)}\left(W^{(q)}(x)+r\int_0^{x-b^*}W^{(q+r)}(x-b^*-y)W^{(q)}(y+b^*)\diff y\right)\notag\\&-r\frac{W^{(q)}(b^*)}{W^{(q)\prime}(b^*)}\left(r\overline{W}^{(q+r)}(x-b^*)+1\right)-r\left((x-b^*)+r\overline{\overline{W}}^{(q+r)}(x-b^*)\right)\\
	&=-r\left((x-b^*) +\frac{W^{(q)}(b^*)}{W^{(q)\prime}(b^*)} - v_{b^*}(x)\right)\\
	&=-r\left\{(x-b^*)+v_{b^*}(b^*)-v_{b^*}(x)\right\}.
	\end{align*}
	Finally, this can be extended to the case in which $x=b^*$ by taking limits as $x \rightarrow b^*$.		

	(ii) Suppose $b^* =0$.
By applying \eqref{W_harmonic_q_r}, \eqref{v_2}, and \eqref{v_3} in \eqref{vf0}, we have
			\begin{align*} 
				(\mathcal{L}-q) v_0(x)&=r \frac{ (\mathcal{L}-q) W^{(q+r)}(x)-r  W^{(q)}(0)  (\mathcal{L}-q) \overline{W}^{(q+r)}(x) }{\Phi(q+r)(\Phi(q+r)-r W^{(q)}(0)) }-r (\mathcal{L}-q) \overline{\overline{W}}^{(q+r)}(x) \\
				&=r \Big[ \frac{ r W^{(q+r)}(x)-r  W^{(q)}(0)  (1+r\overline{W}^{(q+r)}(x))}{\Phi(q+r)(\Phi(q+r)-r W^{(q)}(0)) }- \Big(x+r\overline{\overline{W}}^{(q+r)}(x) \Big) \Big],
			\end{align*}
			which equals $-r(x+v_0(0)-v_0(x))$ as desired.
				\end{proof}	
	We now prove the following.
		\begin{proposition} \label{prop_slope_condition}
	For $b^* \geq 0$, we have $v_{b^*}'(x) \geq 1$ for $x \in (0, b^*)$ and $0 \leq v_{b^*}'(x) \leq 1$ for $x \in (b^*, \infty)$. 
	\end{proposition}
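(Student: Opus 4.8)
The plan is to treat the two regions separately and, in each case, reduce the slope estimate to a monotonicity property of the scale functions under Assumption~\ref{assump_completely_monotone}. Consider first the region $x \in (0, b^*)$. Here $v_{b^*}(x) = W^{(q)}(x)/W^{(q)\prime}(b^*)$ by \eqref{value_function_below_b_star} (for $b^* > 0$; the case $b^* = 0$ makes this interval empty), so $v_{b^*}'(x) = W^{(q)\prime}(x)/W^{(q)\prime}(b^*)$. By Proposition~\ref{prop_b_existence}(i) we have $b^* < \bar{b}$, and by Remark~\ref{remark_scale_function_completely_monotone}(2) $W^{(q)\prime}$ is strictly decreasing on $(0, \bar{b})$; hence $W^{(q)\prime}(x) \geq W^{(q)\prime}(b^*)$ for all $x \in (0, b^*)$, which gives $v_{b^*}'(x) \geq 1$ on this interval. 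This part is essentially immediate.

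For the region $x > b^*$ I would first establish the lower bound $v_{b^*}'(x) \geq 0$. Since $v_{b^*}$ is the expected NPV of a nonnegative cumulative dividend stream (see \eqref{vf}), it is nondecreasing in the initial surplus $x$, so $v_{b^*}' \geq 0$ on $(0,\infty)$; alternatively one can read this off \eqref{015} together with \eqref{vf5}. The substantive claim is the upper bound $v_{b^*}'(x) \leq 1$ for $x > b^*$. My plan here, following the remark's hint about the technique of Kyprianou et al.~\cite{KLP}, is to exploit the decomposition of Remark~\ref{remark_scale_function_completely_monotone}(1): writing $W^{(q)}(y) = \Phi'(q) e^{\Phi(q) y} - \int_0^\infty e^{-ty}\mu^{(q)}(\diff t)$, I would substitute into the formula \eqref{015} for $v_{b^*}'$ (with $b = b^*$ and using $\mathfrak{C}_{b^*}$ so that $\Phi(q+r) Z^{(q)\prime}(b^*,\Phi(q+r)) = r W^{(q)\prime}(b^*)$, i.e.\ the denominator becomes $r W^{(q)\prime}(b^*)$). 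Using $W^{(q)\prime}(x-y) + r\int_0^{x-b^*} W^{(q+r)}(y) W^{(q)\prime}(x-y)\diff y$ and the convolution identity \eqref{W_q_W_q_r_relation}, one recognizes the numerator of the first term as essentially $W^{(q+r)\prime}$-type expressions convolved against $W^{(q)\prime}$; the exponential piece $\Phi'(q)e^{\Phi(q)\cdot}$ contributes a term that, combined with $-\overline{W}^{(q+r)}(x-b^*)$, can be controlled using \eqref{W_q_limit}, while the completely monotone piece $\int_0^\infty e^{-t\cdot}\mu^{(q)}(\diff t)$ contributes terms with a favorable sign. The target inequality $v_{b^*}'(x) \leq 1$ is equivalent to $1 - v_{b^*}'(x) \geq 0$, and I would aim to write $1 - v_{b^*}'(x)$ as a manifestly nonnegative expression — an integral against $\mu^{(q)}$ of nonnegative kernels plus a boundary term that vanishes by $\mathfrak{C}_{b^*}$ and is nonnegative by the sign of $h$ established in Proposition~\ref{prop_b_existence}(i) (namely $h(b) \geq 0$ iff $b \in (0,b^*]$, equivalently $h(b) \leq 0$ for $b \geq b^*$).

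The main obstacle I anticipate is precisely this last step: obtaining the sign of $1 - v_{b^*}'(x)$ for $x > b^*$. Unlike the lower bound, there is no monotonicity or probabilistic shortcut; one must genuinely manipulate the scale-function decomposition, and the bookkeeping is delicate because $W^{(q)\prime}$ is \emph{not} monotone on $(b^*,\infty)$ — it decreases until $\bar b$ and increases afterwards (Remark~\ref{remark_scale_function_completely_monotone}(2)), so crude bounds will not suffice. I expect the proof to split into checking the inequality at $x = b^*+$ (where it should hold with equality or via continuity and the smooth-fit condition from Lemma~\ref{smooth_fit_prob1}), and then showing $v_{b^*}'$ does not exceed $1$ as $x$ increases, most likely by examining the sign of $(\mathcal{L}-q)v_{b^*}$ via Lemma~\ref{generator_on_v} — which gives $(\mathcal{L}-q)v_{b^*}(x) = -r\{(x-b^*) + v_{b^*}(b^*) - v_{b^*}(x)\}$ on $(b^*,\infty)$ — together with an asymptotic analysis of $v_{b^*}'$ at infinity using \eqref{W_q_limit} to pin down the limiting slope. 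Combining the generator identity with the known behavior at the two ends of the interval and a careful argument ruling out $v_{b^*}' > 1$ in between should close the estimate; I would also double-check the boundary case $b^* = 0$ separately, working directly from \eqref{vf0}.
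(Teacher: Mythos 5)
Your treatment of $x\in(0,b^*)$ coincides with the paper's and is fine, and your identification of the two boundary facts for the region $x>b^*$ (smooth fit giving $v_{b^*}'(b^*)=1$, and a limiting slope $K\in[0,1]$ at infinity) matches what the paper establishes. However, there is a genuine gap at exactly the point you flag as "the main obstacle": you never supply the mechanism that forces $v_{b^*}'\le 1$ \emph{between} the two endpoints, and the ingredients you list do not combine to give it. Neither the generator identity of Lemma \ref{generator_on_v} nor the non-monotonicity-aware "careful argument ruling out $v_{b^*}'>1$ in between" is carried out, and a maximum-principle argument from $(\mathcal{L}-q)v_{b^*}(x)=-r\{(x-b^*)+v_{b^*}(b^*)-v_{b^*}(x)\}$ is not straightforward, since that identity involves $v_{b^*}$ itself rather than its derivative. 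The paper's actual closing step is a single-crossing argument in the \emph{spectral} variable: after rewriting $v_{b}'(x)=K+r\,\Phi(q+r)^{-1}Z^{(q)\prime}(b,\Phi(q+r))^{-1}\int_0^\infty e^{-tx}g(t,b)\,\mu^{(q+r)}(\diff t)$ (Lemma \ref{lemma_v_b_prime_alternative}), one shows $g(0+,b^*)=-\infty$ and $\partial_t g(t,b^*)>1$ (using $W^{(q)\prime}(u)\ge W^{(q)\prime}(b^*)$ on $(0,b^*)$, i.e.\ $b^*<\bar b$), so $g(\cdot,b^*)$ changes sign exactly once, at some $p>0$. This yields the pointwise domination $e^{-(x-b^*)t}g(t,b^*)\le e^{-(x-b^*)p}g(t,b^*)$ for \emph{all} $t>0$, hence $v_{b^*}'(x)\le K+e^{-(x-b^*)p}(1-K)\le 1$, where the value $1-K$ of the integral at $x=b^*$ is pinned down by smooth fit. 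Without this single-sign-change property of $g$ (or an equivalent device), the boundary data at $x=b^*$ and $x=\infty$ do not control the interior, precisely because $W^{(q)\prime}$ is not monotone on $(b^*,\infty)$.

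Two further, more minor points. First, you propose substituting the decomposition of $W^{(q)}$ (measure $\mu^{(q)}$) directly into \eqref{015}; the paper instead first differentiates \eqref{W_q_W_q_r_relation} to eliminate the infinite convolution $\int_0^{x-b}W^{(q+r)}(y)W^{(q)\prime}(x-y)\,\diff y$ in favor of $W^{(q+r)\prime}(x)$ and a convolution over the fixed interval $(0,b)$, and only then decomposes $W^{(q+r)}$ via $\mu^{(q+r)}$; decomposing $W^{(q)}$ alone leaves the $W^{(q+r)}$-convolution intact and the bookkeeping does not close. Second, the assertion that $v_{b^*}$ is nondecreasing in $x$ "because the dividend stream is nonnegative" is too quick for a fixed (non-optimized) strategy; the paper justifies it by conditioning on the first Poisson decision time, writing $v_{b^*}(x)$ as an expectation of $[(X(\mathbf{e}_r)+x-b^*)\vee 0]$ plus $v_{b^*}((X(\mathbf{e}_r)+x)\wedge b^*)$ on the no-ruin event, each term being increasing in $x$ once the monotonicity of $v_{b^*}$ on $(0,b^*)$ from part (1) is in hand. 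You should also note that the $b^*=0$ case requires its own computation of $v_0'(0+)\le 1$ from the criterion \eqref{b_zero_criteria}, which replaces the smooth-fit anchor used when $b^*>0$.
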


	To prove this proposition, we first rewrite the derivative \eqref{015} by using the decomposition of the scale function given in Remark \ref{remark_scale_function_completely_monotone} (1).
	
	\begin{lemma} \label{lemma_v_b_prime_alternative}
		For $x \geq b \geq 0$, we have
%
		\begin{align*} 
	v_{b}'(x)
		&= K +r \frac{\int_0^\infty e^{-tx} g(t,b) \mu^{(q+r)} (\diff t )}{\Phi(q+r)Z^{(q)\prime}(b,\Phi(q+r))},
	\end{align*}	
		where
	\begin{align*}K &:= r\frac{\Phi'(q+r)}{\Phi(q+r)} + r\int_0^\infty \frac {\mu^{(q+r)}(\diff t)} t, \\
	g(t,b) &:= t  + r  W^{(q)}(0) + r\int_0^{b}  e^{ut} W^{(q)\prime}(u) \diff u -    \frac {e^{tb}} t  \Phi(q+r) Z^{(q)\prime}(b,\Phi(q+r)) \\
	&= t  + r  W^{(q)}(0) + r\int_0^{b}  e^{ut} \Big(W^{(q)\prime}(u)- \frac{\Phi(q+r)}{r}Z^{(q)\prime}(b,\Phi(q+r)) \Big) \diff u -  \frac {\Phi(q+r)} t  Z^{(q)\prime}(b,\Phi(q+r)).
	\end{align*}
	Note that by considering $\int_0^\infty e^{-xt} \mu^{(q+r)}(\diff t) $ as the density of the $(q+r)$-resolvent measure of $-X$ at $x > 0$ as in Theorem 2.7 (iv) of \cite{KKR}, 
	\begin{align*}
	\frac 1 {q+r} > \frac {\p (X(\mathbf{e}_{q+r}) < 0)} {q+r}  = \int_0^\infty \int_0^\infty e^{-xt} \mu^{(q+r)}(\diff t) \diff x = \int_0^\infty \int_0^\infty e^{-xt}  \diff x \mu^{(q+r)}(\diff t) = \int_0^\infty  \frac {\mu^{(q+r)}(\diff t)} t.
	\end{align*}
	\end{lemma}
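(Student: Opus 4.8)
The plan is to substitute the decomposition $W^{(q+r)}(x) = \Phi'(q+r) e^{\Phi(q+r) x} - \int_0^\infty e^{-xt} \mu^{(q+r)}(\diff t)$ from Remark \ref{remark_scale_function_completely_monotone} (1) directly into the expression \eqref{015} for $v_b'(x)$, and then carefully reorganize the resulting terms into the claimed form $K + r \frac{\int_0^\infty e^{-tx} g(t,b) \mu^{(q+r)}(\diff t)}{\Phi(q+r) Z^{(q)\prime}(b,\Phi(q+r))}$. Since we are working on the region $x \geq b \geq 0$, every integral $\int_0^{x-b}$ runs over a genuine interval. First I would handle the two $W^{(q+r)}$-dependent pieces in \eqref{015}: the term $\overline{W}^{(q+r)}(x-b) = \int_0^{x-b} W^{(q+r)}(u)\,\diff u$ and the convolution term $r\int_0^{x-b} W^{(q+r)}(y) W^{(q)\prime}(x-y)\,\diff y$. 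For the first, integrating the decomposition gives $\overline{W}^{(q+r)}(x-b) = \frac{\Phi'(q+r)}{\Phi(q+r)}\bigl(e^{\Phi(q+r)(x-b)} - 1\bigr) - \int_0^\infty \frac{1 - e^{-(x-b)t}}{t} \mu^{(q+r)}(\diff t)$. For the convolution, I would swap the order: $\int_0^{x-b} W^{(q+r)}(y) W^{(q)\prime}(x-y)\,\diff y$, insert the decomposition of $W^{(q+r)}(y)$, and in the $\mu^{(q+r)}$-part write $W^{(q)\prime}(x-y)$ against $e^{-yt}$ and change variables $u = x - y$ so that $\int_0^{x-b} e^{-yt} W^{(q)\prime}(x-y)\,\diff y = e^{-xt}\int_b^x e^{ut} W^{(q)\prime}(u)\,\diff u$; this is exactly where the $\int_0^b e^{ut} W^{(q)\prime}(u)\,\diff u$ piece of $g(t,b)$ will come from, after splitting $\int_b^x = \int_0^x - \int_0^b$ and recognizing $r \int_0^x e^{ut} W^{(q)\prime}(u)\,\diff u \, e^{-xt}$ against the $W^{(q)\prime}(x)$ term already present in \eqref{015} (using $\frac{d}{dx}(e^{-xt}\int_0^x e^{ut} W^{(q)\prime}(u)\,\diff u)$-type bookkeeping, or more directly integration by parts on $\int_0^x e^{ut}W^{(q)\prime}(u)\,\diff u$).

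Next I would collect the exponential pieces proportional to $e^{\Phi(q+r) x}$: these come from $\Phi'(q+r) e^{\Phi(q+r)y}$ inside the convolution and from $\overline{W}^{(q+r)}$, and they should combine — using the definition \eqref{z1}--\eqref{Z_prime_Phi} of $Z^{(q)}(b,\Phi(q+r))$ and $Z^{(q)\prime}(b,\Phi(q+r))$ — with the $W^{(q)}$-terms in \eqref{015} so that the $x$-dependent exponential cancels, leaving only the constant $K$. Concretely, $\int_0^{x-b} e^{\Phi(q+r) y} W^{(q)\prime}(x-y)\,\diff y$ after the substitution $u=x-y$ becomes $e^{\Phi(q+r) x}\int_b^x e^{-\Phi(q+r) u} W^{(q)\prime}(u)\,\diff u$, and an integration by parts plus the Laplace-transform identity $\int_0^\infty e^{-\Phi(q+r) u} W^{(q)}(u)\,\diff u = r^{-1}$ lets one rewrite this in terms of $Z^{(q)}(b,\Phi(q+r))$ and $W^{(q)}(b)$, which then matches the $W^{(q)}(x)$-normalizing factor; the $e^{\Phi(q+r)x}$ dependence drops out by the very definition of $Z^{(q)}(x,\Phi(q+r))$ as an absolutely convergent integral. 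The leftover constant terms (the $-\Phi'(q+r)/\Phi(q+r)$, $-\int_0^\infty \mu^{(q+r)}(\diff t)/t$, etc., each multiplied by $r$) assemble into $K$, with signs arranged so that $K = r\frac{\Phi'(q+r)}{\Phi(q+r)} + r\int_0^\infty \frac{\mu^{(q+r)}(\diff t)}{t}$. Finally, the two equivalent forms of $g(t,b)$ differ only by adding and subtracting $r\int_0^b e^{ut}\,\diff u \cdot \frac{\Phi(q+r)}{r} Z^{(q)\prime}(b,\Phi(q+r)) = \frac{e^{tb}-1}{t}\Phi(q+r)Z^{(q)\prime}(b,\Phi(q+r))$, which is immediate once the first form is established; and the displayed resolvent-density remark is just Tonelli plus Theorem 2.7 (iv) of \cite{KKR} together with the trivial bound $\p(X(\mathbf{e}_{q+r})<0) < 1$.

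The main obstacle I anticipate is purely organizational rather than conceptual: keeping the $e^{\Phi(q+r)x}$-terms and the $e^{-tx}$-terms separated and verifying that \emph{all} of the $x$-dependent exponential contributions cancel, so that $K$ is genuinely constant in $x$. This requires using the definition of $Z^{(q)\prime}(b,\Phi(q+r))$ in \eqref{Z_prime_Phi} in exactly the right place and being careful with the change of variables in the convolution integrals (the endpoints shift from $[0,x-b]$ to $[b,x]$, and one repeatedly splits $\int_b^x$ as $\int_0^x - \int_0^b$). A secondary point to watch is the treatment of the boundary term $r W^{(q)}(0)$, which enters through the integration by parts $\int_0^b e^{ut} W^{(q)\prime}(u)\,\diff u = e^{tb}W^{(q)}(b) - W^{(q)}(0) - t\int_0^b e^{ut}W^{(q)}(u)\,\diff u$ (recall $W^{(q)}(0)$ may be nonzero in the bounded-variation case, by \eqref{eq:Wqp0}); tracking this term correctly is what produces the explicit $r W^{(q)}(0)$ summand in $g(t,b)$. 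Once the bookkeeping is done, no estimates beyond dominated/monotone convergence (justifying the differentiation under the integral sign and the Fubini swaps, all of which are legitimate because $\mu^{(q+r)}$ is a finite measure and $W^{(q)}$, $W^{(q)\prime}$ are locally bounded on $(0,\infty)$) are needed.
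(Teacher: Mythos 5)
Your overall strategy---decompose $W^{(q+r)}$ via Remark \ref{remark_scale_function_completely_monotone} (1), change variables in the convolution, integrate by parts, and use the definition of $Z^{(q)\prime}(b,\Phi(q+r))$ to cancel the $e^{\Phi(q+r)x}$ terms---is the same as the paper's, and your treatment of the $\overline{W}^{(q+r)}$ term, of the boundary term $rW^{(q)}(0)$, of the equivalence of the two forms of $g(t,b)$, and of the resolvent remark are all correct. But there is a concrete gap in the middle: you skip the paper's first step, which is to differentiate the identity \eqref{W_q_W_q_r_relation} so as to replace the numerator $W^{(q)\prime}(x)+r\int_0^{x-b}W^{(q+r)}(y)W^{(q)\prime}(x-y)\,\diff y$ of \eqref{015} by $W^{(q+r)\prime}(x)-rW^{(q+r)}(x)W^{(q)}(0)-r\int_0^{b}W^{(q+r)}(x-u)W^{(q)\prime}(u)\,\diff u$ (this is \eqref{v_b_prime_alternative}). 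Only after this rewriting does every $W^{(q+r)}$ appear at arguments $x$ or $x-u$ with $u\in[0,b]$, so that the decomposition produces exactly the terms $\int_0^\infty t e^{-tx}\mu^{(q+r)}(\diff t)$, $\int_0^\infty e^{-tx}W^{(q)}(0)\mu^{(q+r)}(\diff t)$, and $\int_0^b\int_0^\infty e^{-t(x-u)}\mu^{(q+r)}(\diff t)W^{(q)\prime}(u)\,\diff u$ appearing in $g(t,b)$.

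If instead you substitute the decomposition directly into \eqref{015} as you propose, then after the change of variables, the split $\int_b^x=\int_0^x-\int_0^b$, and the integration by parts on $\int_0^x e^{ut}W^{(q)\prime}(u)\,\diff u$, you are left with the residual terms
\begin{equation*}
W^{(q)\prime}(x)+r\Phi'(q+r)W^{(q)}(x)-\Phi'(q+r)\Phi(q+r)Z^{(q)}(x,\Phi(q+r))-rW^{(q)}(x)\,\mu^{(q+r)}\bigl([0,\infty)\bigr)+r\int_0^\infty te^{-tx}\int_0^x e^{ut}W^{(q)}(u)\,\diff u\,\mu^{(q+r)}(\diff t),
\end{equation*}
and the claim that these collapse to $\int_0^\infty te^{-tx}\mu^{(q+r)}(\diff t)$ is \emph{not} a consequence of the definition of $Z^{(q)}(\cdot,\Phi(q+r))$ or of the Laplace-transform identity $\int_0^\infty e^{-\Phi(q+r)u}W^{(q)}(u)\,\diff u=r^{-1}$ alone; it is precisely the differentiated form of \eqref{W_q_W_q_r_relation} combined with the decomposition. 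In other words, some relation linking $W^{(q)}$ to $\mu^{(q+r)}$ (equivalently, to $W^{(q+r)}$) is indispensable, because the term $W^{(q)\prime}(x)$ in \eqref{015} must end up \emph{inside} the $\mu^{(q+r)}$-integral as the leading ``$t$'' of $g(t,b)$, and the decomposition of $W^{(q)}$ itself would involve the wrong measure $\mu^{(q)}$. The gap is fillable---invoke \eqref{W_q_W_q_r_relation} at the outset as the paper does---but as written your ``pure bookkeeping'' claim does not close the argument. (A minor additional point: \eqref{015} is stated for $b>0$, so the case $b=0$ needs the separate remark that the same expression follows from differentiating \eqref{vf0}.)
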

	\begin{proof}
By differentiating both sides of \eqref{W_q_W_q_r_relation}, for $x > 0$,
	\begin{align*}
	&W^{(q+r)\prime}(x)-rW^{(q+r)}(x)W^{(q)}(0)-r\int_0^bW^{(q+r)}(x-u)W^{(q)\prime}(u) \diff u \\
	&=W^{(q)\prime}(x)+r\int_b^xW^{(q+r)}(x-u)W^{(q)\prime}(u)\diff u = W^{(q)\prime}(x)+ r \int_0^{x- b} W^{(q+r)}(y)W^{(q)\prime} (x- y) \diff y.
	\end{align*}
	Hence, the equality  \eqref{015}, for $b > 0$, reduces to
	\begin{align} \label{v_b_prime_alternative}
		\frac{1}{r}v_{b}'(x)=\frac{W^{(q+r)\prime}(x)-rW^{(q+r)}(x)W^{(q)}(0)-r\int_0^bW^{(q+r)}(x-u)W^{(q)\prime}(u)\diff u}{\Phi(q+r)Z^{(q)\prime}(b,\Phi(q+r))}-\overline{W}^{(q+r)}(x-b).
	\end{align}
The same expression is obtained for $b = 0$ by differentiating \eqref{vf0}. 
	Further, by using Remark \ref{remark_scale_function_completely_monotone} (1), we can write
	\begin{align}\label{vf_cm}
		\frac{1}{r}v_{b}'(x)&=\Phi'(q+r)\frac{\Phi(q+r)e^{\Phi(q+r)x}-re^{\Phi(q+r)x}W^{(q)}(0)-r\int_0^{b}e^{\Phi(q+r)(x-u)}W^{(q)\prime}(u)\diff u}{\Phi(q+r)Z^{(q)\prime}(b,\Phi(q+r))}\\&-\Phi'(q+r)\int_0^{x-b}e^{\Phi(q+r)u}\diff u+\int_0^{x-b}\int_0^\infty e^{-tu} \mu^{(q+r)} (\diff t )\diff u \notag\\
		&+ \frac{\int_0^\infty te^{-tx} \mu^{(q+r)} (\diff t ) + r \int_0^\infty e^{-tx} W^{(q)}(0) \mu^{(q+r)} (\diff t)+ r\int_0^{b} \int_0^\infty e^{-t (x-u)} \mu^{(q+r)} (\diff t)W^{(q)\prime}(u)\diff u}{\Phi(q+r)Z^{(q)\prime}(b,\Phi(q+r))}. \notag
	\end{align}
	Integration by parts gives
	\begin{align*}
		\int_0^{b}e^{\Phi(q+r)(x-u)}W^{(q)\prime}(u) \diff u= e^{\Phi(q+r) x} \Big[ e^{-\Phi(q+r)b}W^{(q)}(b)-W^{(q)}(0)+\Phi(q+r)\int_0^{b}e^{-\Phi(q+r)u}W^{(q)}(u) \diff u \Big],
	\end{align*}
	and hence
	\begin{align*}
		\Phi(q+r)e^{\Phi(q+r)x}&-re^{\Phi(q+r)x}W^{(q)}(0)-r\int_0^{b}e^{\Phi(q+r)(x-u)}W^{(q)\prime}(u) \diff u \\
		&=e^{\Phi(q+r)(x-b)}\left(e^{\Phi(q+r)b}\Phi(q+r)\left(1-r\int_0^{b}e^{-\Phi(q+r) u}W^{(q)}(u) \diff u \right)-rW^{(q)}(b)\right)\\
		&=e^{\Phi(q+r)(x-b)}Z^{(q)\prime}(b,\Phi(q+r)).
	\end{align*}
	Now, by using the above expression in \eqref{vf_cm}, we obtain
	\begin{align}
		\frac{1}{r}v_{b}'(x)&=\frac{\Phi'(q+r)}{\Phi(q+r)}\left(e^{\Phi(q+r)(x-b)}-(e^{\Phi(q+r)(x-b)}-1)\right)+\int_0^{x-b}\int_0^\infty e^{-tu} \mu^{(q+r)} (\diff t ) \diff u \notag\\
		&+ \frac{\int_0^\infty te^{-tx} \mu^{(q+r)} (\diff t ) + r \int_0^\infty e^{-tx} W^{(q)}(0) \mu^{(q+r)} (\diff t)+ r\int_0^{b} \int_0^\infty e^{-t (x-u)} \mu^{(q+r)} (\diff t)W^{(q)\prime}(u)\diff u}{\Phi(q+r)Z^{(q)\prime}(b,\Phi(q+r))}\notag\\
		&=\frac{\Phi'(q+r)}{\Phi(q+r)} 
			+  \int_0^\infty \frac {\mu^{(q+r)}(\diff t)} t -  \int_0^\infty \frac {e^{-t (x-b)}} t \mu^{(q+r)} (\diff t) \notag\\
		&+\frac{\int_0^\infty te^{-tx} \mu^{(q+r)} (\diff t ) + r \int_0^\infty e^{-tx} W^{(q)}(0) \mu^{(q+r)} (\diff t)+ r\int_0^{b} \int_0^\infty e^{-t (x-u)} \mu^{(q+r)}(\diff t)W^{(q)\prime}(u)\diff u}{\Phi(q+r)Z^{(q)\prime}(b,\Phi(q+r))}.\notag
	\end{align}	
	Hence, the result is obtained.
	\end{proof}	

We are now ready to prove the proposition.
	\begin{proof}[Proof of Proposition \ref{prop_slope_condition}]
	(i) Suppose $b^* > 0$.
	
	(1) Suppose $x \leq b^*$. Because  $b^*\leq \bar{b}$ and $W^{(q)\prime}$ is decreasing on $(0,\bar{b})$ as mentioned in Remark \ref{remark_scale_function_completely_monotone} (2),
	\[
	v_{b^*}'(x)=\frac{W^{(q)\prime}(x)}{W^{(q)\prime}(b^*)}\geq 1.
	\]
	(2) Suppose $x > b^*$. 
	First, because the strategy $\pi^{b^*}$ pushes the process to $b^*$ at the first exponential time $T(1) =\mathbf{e}_r$ at which the process is above $b^*$, we can write
	\begin{align} \label{v_b_star_rewritten}
	v_{b^*}(x) &= \E \Big[ e^{-q \mathbf{e}_r} [(X(\mathbf{e}_r) + x - b^*) \vee 0] 1_{\{\underline{X}(\mathbf{e}_r) + x \geq 0 \}}\Big] + \E \Big[ e^{-q \mathbf{e}_r} v_{b^*}((X(\mathbf{e}_r) + x) \wedge b^*) 1_{\{\underline{X}(\mathbf{e}_r) +x  \geq 0 \}}\Big],
	\end{align}
	where $\underline{X}$ is the running infimum process of $X$.
	Because $v_{b^*}$ is nonnegative and increasing on $(0, b^*)$ according to (1), this is increasing in $x > b^*$. This shows that $v_{b^*}'(x)  \geq 0$.

	By Lemma \ref{lemma_v_b_prime_alternative} and because the condition $\mathfrak{C}_b$ holds for $b = b^*$,
	\begin{align} \label{v_b_mu}
	\begin{split}
	v_{b^*}'(x)
		&= K +\frac{\int_0^\infty e^{-tx} g(t,b^*) \mu^{(q+r)} (\diff t )}{W^{(q)\prime} (b^*)},
		\end{split}
	\end{align}	
	where by $\mathfrak{C}_b$ for $b = b^*$,
	\begin{align*}
	g(t,b^*) 
	&= t  + r  W^{(q)}(0) + r\int_0^{b^*}  e^{ut} (W^{(q)\prime}(u)- W^{(q)\prime} (b^*)) \diff u -  \frac r t  W^{(q)\prime} (b^*).
	\end{align*}
		
We have $g(0+,b^*) = -\infty$
	and because \eqref{b_star_b_bar} and Remark  \ref{remark_scale_function_completely_monotone} (2) give $W^{(q)\prime}(u) \geq W^{(q)\prime}(b^*)$ for $u \leq b^*$,
		\begin{align*}
	\frac \partial {\partial t} g(t,b^*) 
	&= 1  + r\int_0^{b^*}  u e^{ut} (W^{(q)\prime}(u)- W^{(q)\prime} (b^*)) \diff u +  \frac r {t^2}  W^{(q)\prime} (b^*) > 1.
	\end{align*}
	Hence, there exists $p > 0$ such that 
	\begin{align} \label{signs_g}
	g(t,b^*) \leq 0 \Longleftrightarrow t \leq p. 
	\end{align}
	
	By monotone convergence,  
	\begin{align*}
	\int_0^\infty e^{-tx} g(t,b^*) \mu^{(q+r)} (\diff t ) = -\int_0^p e^{-tx} |g(t,b^*)| \mu^{(q+r)} (\diff t ) + \int_p^\infty e^{-tx} g(t,b^*) \mu^{(q+r)} (\diff t ) \xrightarrow{x \uparrow \infty}  0.
	\end{align*}
	Hence, in view of \eqref{v_b_mu}, $\lim_{x \rightarrow \infty}v_{b^*}'(x) = K$. 
	
	To show that 
	\begin{align}
	0 \leq K \leq 1, \label{K_bounds}
	\end{align}
by using \eqref{v_b_star_rewritten} and denoting $\overline{X}$ as the running supremum process of $X$,
	\begin{align*}
	v_{b^*} (x) 
	&\leq \E \Big[ e^{-q  \mathbf{e}_r} (x+\overline{X}(\mathbf{e}_r) - b^*)  \Big]  +\E_x \Big[ e^{-q \mathbf{e}_r} \Big] \sup_{0 \leq y \leq b^*} v_{b^*} (y) \\
	&\leq x \E\Big[ e^{-q \mathbf{e}_r}  \Big] + \Phi(r)^{-1} + \E \Big[ e^{-q \mathbf{e}_r} \Big] \sup_{0 \leq y \leq b^*} v_{b^*} (y),
	\end{align*}
	where in the last inequality we used $\E [ e^{-q \mathbf{e}_r} \overline{X}(\mathbf{e}_r)   ] \leq \E [  \overline{X}( \mathbf{e}_r)  ] = \Phi(r)^{-1} < \infty$ (see Exercise 3.6 in \cite{K}). Hence, $K = \lim_{x \rightarrow \infty} {v_{b^*}(x)} / x \leq \E \Big[ e^{-q \mathbf{e}_r} \Big] < 1$. Moreover, based on the aforementioned argument according to which $v_{b^*}$ is nondecreasing, we must have $K \geq 0$.


	
	
		Because $v_{b^*}'(b^*) = 1$ according to \eqref{value_function_below_b_star} and the smoothness at $b^*$ as stated in Lemma \ref{smooth_fit_prob1},
	\begin{align*}
	1 = v_{b^*}'(b^*)
		&= K +\frac{\int_0^\infty e^{-t b^*} g(t,b^*) \mu^{(q+r)} (\diff t )}{W^{(q)\prime} (b^*)}.
		\end{align*}
In addition, \eqref{signs_g} gives  $e^{-(x-b^*)p}g(t, b^*)\geq e^{-(x-b^*)t}g(t,b^*)$ for all $t>0$.
Therefore, by using these and \eqref{K_bounds},
	\begin{align*}
	v_{b^*}'(x)
		&= K +\frac{\int_0^\infty e^{-(x-b^*) t}e^{-t b^*} g(t,b^*) \mu^{(q+r)} (\diff t )}{W^{(q)\prime} (b^*)} \\
		&\leq K +e^{-(x-b^*) p} \frac{\int_0^\infty e^{-t b^*} g(t,b^*) \mu^{(q+r)} (\diff t )}{W^{(q)\prime} (b^*)} = K +e^{-(x-b^*) p} (1-K) \leq 1.
	\end{align*}
	(ii) Suppose $b^* = 0$.
		First, we can write
	\begin{align*}
	v_0(x) &= \E \Big[ e^{-q \mathbf{e}_r} (X(\mathbf{e}_r) + x) 1_{\{\underline{X}(\mathbf{e}_r) + x \geq 0 \}}\Big] + \E \Big[ e^{-q \mathbf{e}_r} v_0(0) 1_{\{\underline{X}(\mathbf{e}_r) +x  \geq 0 \}}\Big].
	\end{align*}
	Because $v_0$ is nonnegative, this increases in $x$, showing that $v_{0}'(x)  \geq 0$. 

By Lemma \ref{lemma_v_b_prime_alternative} and the fact that $Z^{(q)\prime}(0,\Phi(q+r))=\Phi(q+r)-rW^{(q)}(0)$, we have
		\begin{align}  \label{v_zero_prime_rewritten}
	v_{0}'(x)
		&= K +r \frac{\int_0^\infty e^{-tx} g(t,0) \mu^{(q+r)} (\diff t )}{\Phi(q+r)(\Phi(q+r)-rW^{(q)}(0))},
	\end{align}	
where
	\begin{align*}
	g(t,0) &= t  + r  W^{(q)}(0) -    \frac {1} t  \Phi(q+r) (\Phi(q+r)-rW^{(q)}(0)).
	\end{align*}


Recall that $b^*=0$ if and only if (i) or (ii) of Remark \ref{remark_cond_b_zero} holds.
For case (i), we have $\Phi(q+r)-rW^{(q)}(0)=\Phi(q+r)>0$ by \eqref{eq:Wqp0}, while in  case (ii), by Remark \ref{remark_cond_b_zero} (ii), we have
\[
\Phi(q+r)-rW^{(q)}(0)=\Phi(q+r)-\frac{r}{c}\geq r\frac{q+\Pi(-\infty,0)}{c^2\Phi(q+r)}>0.
\]
This implies that $g(0+,0)=-\infty$ and 
\begin{align*}	\frac \partial {\partial t}g(t,0) &= 1 +    \frac{\Phi(q+r)}{t^2} (\Phi(q+r)-rW^{(q)}(0)) > 1.	\end{align*}
Hence, there exists $p > 0$ such that $g$ is negative on $(0, p)$ and positive on $(p, \infty)$. Then, $e^{-xp}g(t)\geq e^{-xt}g(t)$ for all $t>0$.


By \eqref{v_b_prime_alternative},
\begin{align*}
		v_{0}'(0+)= r \frac{W^{(q+r)\prime}(0+)-rW^{(q+r)}(0)W^{(q)}(0)}{\Phi(q+r)(\Phi(q+r)-rW^{(q)}(0))} =\frac{rW^{(q)\prime}(0+)}{\Phi(q+r)(\Phi(q+r)-rW^{(q)}(0))},\end{align*}
	where, under (ii) of Remark \ref{remark_cond_b_zero}, the second equality holds by \eqref{eq:Wqp0}.
Noting that $b^*=0$ if and only if $\Phi^2(q+r)\geq rW^{(q)\prime}(0+)+r\Phi(q+r)W^{(q)}(0)$ (in view of \eqref{b_zero_criteria} and Proposition \ref{prop_b_existence}), we have
\begin{align*}
		v_0'(0+) 
		 =\frac{rW^{(q)\prime}(0+)}{\Phi(q+r)(\Phi(q+r)-rW^{(q)}(0))}\leq1.
\end{align*}
On the other hand, \eqref{v_zero_prime_rewritten} gives
\[
v_0'(0+)=K+ r\frac{\int_0^{\infty}g(t,0)\mu^{(q+r)} (\diff t )}{\Phi(q+r)(\Phi(q+r)-rW^{(q)}(0))}, \]
and hence
\[
 r\frac{\int_0^{\infty}g(t,0)\mu^{(q+r)} (\diff t )}{\Phi(q+r)(\Phi(q+r)-rW^{(q)}(0))}\leq 1-K.
\]
Using these and \eqref{K_bounds}, for $x > 0$,
\begin{align*}
	v_{0}'(x)
	&= K +  r \frac{\int_0^\infty e^{-x t} g(t,0) \mu^{(q+r)} (\diff t )}{\Phi(q+r)(\Phi(q+r)-rW^{(q)}(0))} \\
	&\leq K +e^{-x p}  r\frac{\int_0^\infty g(t,0) \mu^{(q+r)} (\diff t )}{\Phi(q+r) (\Phi(q+r)-rW^{(q)}(0))} \leq K +e^{-x p} (1-K) \leq 1.
\end{align*}
	\end{proof}
Next, by the application of Proposition \ref{prop_slope_condition} the following result is immediate.
\begin{lemma}\label{cond_max_v}
	For $b^*\geq0$ we have
	\begin{equation*} 
	\max_{0\leq l\leq x} \{ l+v_{b^*}(x-l)-v_{b^*}(x) \} =
	\begin{cases} 0 &\mbox{if } x \in[0,b^*], \\ 
	x-b^*+v_{b^*}(b^*)-v_{b^*}(x) & \mbox{if } x \in (b^*, \infty).
\end{cases}
	\end{equation*}
\end{lemma}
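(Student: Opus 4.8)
The statement to prove, Lemma \ref{cond_max_v}, asserts that the maximizer in $\max_{0 \le l \le x}\{l + v_{b^*}(x-l) - v_{b^*}(x)\}$ is $l = 0$ when $x \le b^*$ and $l = x - b^*$ when $x > b^*$.

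The plan is to treat the function $l \mapsto \phi(l) := l + v_{b^*}(x-l)$ on $[0,x]$ and locate its maximum, since subtracting the constant $v_{b^*}(x)$ does not change the argmax. First I would differentiate: $\phi'(l) = 1 - v_{b^*}'(x-l)$ for $l$ such that $x - l \in (0,\infty)\setminus\{b^*\}$ (where $v_{b^*}$ is differentiable by Lemma \ref{smooth_fit_prob1}), and the sign of $\phi'(l)$ is governed entirely by whether $v_{b^*}'(x-l)$ exceeds or falls below $1$. This is exactly where Proposition \ref{prop_slope_condition} enters: it says $v_{b^*}'(y) \ge 1$ for $y \in (0,b^*)$ and $0 \le v_{b^*}'(y) \le 1$ for $y \in (b^*,\infty)$.

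For the case $x \le b^*$: here $x - l \in [0,x] \subseteq [0,b^*]$ for all $l \in [0,x]$, so $v_{b^*}'(x-l) \ge 1$, giving $\phi'(l) \le 0$ on $(0,x)$; hence $\phi$ is nonincreasing and its maximum is at $l = 0$, with value $\phi(0) - v_{b^*}(x) = 0$. For the case $x > b^*$: for $l \in [0, x-b^*)$ we have $x - l \in (b^*, x] \subseteq (b^*,\infty)$, so $v_{b^*}'(x-l) \le 1$ and $\phi'(l) \ge 0$; for $l \in (x-b^*, x]$ we have $x - l \in [0, b^*)$, so $v_{b^*}'(x-l) \ge 1$ and $\phi'(l) \le 0$. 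Thus $\phi$ increases up to $l = x - b^*$ and decreases thereafter, so the maximum is attained at $l = x - b^*$, with value $x - b^* + v_{b^*}(b^*) - v_{b^*}(x)$. One small care point: one should check that this value is indeed nonnegative (so that the overall max is consistent), which follows from $v_{b^*}' \le 1$ on $(b^*,\infty)$ integrated from $b^*$ to $x$, giving $v_{b^*}(x) - v_{b^*}(b^*) \le x - b^*$.

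There is essentially no hard obstacle here — the statement is a direct consequence of Proposition \ref{prop_slope_condition}, which is the genuinely difficult result. The only mild technical point is handling the non-differentiability of $v_{b^*}$ at the single point $b^*$ (in the bounded variation case $v_{b^*}$ is only $C^1$, so $v_{b^*}'$ is continuous and this causes no issue; in any case $\phi$ remains continuous on $[0,x]$ and piecewise monotone, so the endpoint/interior-point argmax conclusion is unaffected by a single kink). I would write the proof in two short cases as above, invoking Proposition \ref{prop_slope_condition} for the monotonicity of $\phi$ in each subinterval, and conclude.
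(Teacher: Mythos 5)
Your proof is correct and follows exactly the route the paper intends: the paper states the lemma is ``immediate'' from Proposition \ref{prop_slope_condition}, and your argument — analyzing the sign of $\frac{d}{dl}\bigl(l + v_{b^*}(x-l)\bigr) = 1 - v_{b^*}'(x-l)$ via the slope bounds on either side of $b^*$ — is precisely the omitted verification. The handling of the single kink at $b^*$ and the nonnegativity check are fine (the latter is not even required by the statement).
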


By Lemmas \ref{generator_on_v} and \ref{cond_max_v}, $v_{b^*}$ satisfies the variational inequality \eqref{HJB-inequality}.  Hence, by Lemma \ref{verificationlemma}, we have the optimality of the periodic barrier strategy $\pi^{b^*}$, and the value function is given by $v = v_{b^*}$.

\section{Convergence to the classical case} \label{section_convergence}

In this section, we analyze the behavior of the optimal barrier $b^*$ and the value function $v_{b^*}$ with respect to the parameter $r$. Solely in this section, we write $h^{(r)}$, $b^*_r$, and $v^{(r)} = v_{b^*_r}^{(r)}$, to stress the dependence on $r > 0$.


\begin{lemma} \label{lemma_convergence_r_inf} 
\begin{enumerate}
	\item The optimal periodic barrier $b^*_r$ is increasing in $r$.   
	\item We have $b^*_r \rightarrow \bar{b}$ as $r \rightarrow \infty$.
	\item When $W^{(q)\prime}(0+) < \infty$, $b^*_r$ is zero for sufficiently small $r$.  When $W^{(q)\prime}(0+) = \infty$, $b^*_r \rightarrow 0$ as $r \rightarrow 0$.
	\end{enumerate}
\end{lemma}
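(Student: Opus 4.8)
The plan is to derive all three statements from the representation \eqref{039}, namely $h^{(r)}(b) = -r\int_b^{\infty} e^{-\Phi(q+r) y} W^{(q)\prime\prime}(y)\,\diff y$ for $b>0$, combined with the sign pattern of $W^{(q)\prime\prime}$ in Remark \ref{remark_scale_function_completely_monotone}(2) ($W^{(q)\prime\prime}<0$ on $(0,\bar b)$, $W^{(q)\prime\prime}>0$ on $(\bar b,\infty)$), the characterization of $b^*_r$ in Proposition \ref{prop_b_existence}, and the fact that $r\mapsto\Phi(q+r)$ is strictly increasing. If $\bar b=0$ then $h^{(r)}<0$ on $(0,\infty)$ for every $r$, so $b^*_r=0$ always and the claims are trivial; hence assume $\bar b>0$. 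For (1), fix $0<r_1<r_2$ and put $b:=b^*_{r_1}$, which we take to be positive (otherwise there is nothing to prove). Then $b<\bar b$ and $\int_b^{\infty} e^{-\Phi(q+r_1) y} W^{(q)\prime\prime}(y)\,\diff y=0$ by Proposition \ref{prop_b_existence}. Since $e^{-\Phi(q+r_2) y}=e^{-\Phi(q+r_1) y}\,e^{-(\Phi(q+r_2)-\Phi(q+r_1)) y}$ with the last factor positive and strictly decreasing in $y$, while $W^{(q)\prime\prime}\le 0$ on $(b,\bar b)$ and $\ge 0$ on $(\bar b,\infty)$, we have $e^{-(\Phi(q+r_2)-\Phi(q+r_1)) y} W^{(q)\prime\prime}(y)\le e^{-(\Phi(q+r_2)-\Phi(q+r_1))\bar b}\,W^{(q)\prime\prime}(y)$ for all $y>b$; integrating against $e^{-\Phi(q+r_1) y}\,\diff y$ gives $\int_b^{\infty} e^{-\Phi(q+r_2) y} W^{(q)\prime\prime}(y)\,\diff y\le 0$, i.e.\ $h^{(r_2)}(b^*_{r_1})\ge 0$, and Proposition \ref{prop_b_existence} then yields $b^*_{r_1}\le b^*_{r_2}$.

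For (2), by part (1) the map $r\mapsto b^*_r\in[0,\bar b)$ is nondecreasing, so it converges to some $L\le\bar b$. Suppose $L<\bar b$ and pick $b'\in(L,\bar b)$. Since $b^*_r<b'$ for every $r$, Proposition \ref{prop_b_existence} gives $h^{(r)}(b')\le 0$; equivalently $\int_{b'}^{\infty} e^{-\Phi(q+r)(y-\bar b)} W^{(q)\prime\prime}(y)\,\diff y\ge 0$, and splitting the integral at $\bar b$ we obtain $\int_{b'}^{\bar b} e^{\Phi(q+r)(\bar b-y)}\lvert W^{(q)\prime\prime}(y)\rvert\,\diff y\le\int_{\bar b}^{\infty} e^{-\Phi(q+r)(y-\bar b)} W^{(q)\prime\prime}(y)\,\diff y$. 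Using Remark \ref{remark_scale_function_completely_monotone}(1) to see that $e^{-\Phi(q) y} W^{(q)\prime\prime}(y)$ is bounded on $[\bar b,\infty)$, the right-hand side is at most a constant multiple of $(\Phi(q+r)-\Phi(q))^{-1}$ and hence tends to $0$ as $r\to\infty$; the left-hand side, however, tends to $\infty$, since $\lvert W^{(q)\prime\prime}\rvert$ is bounded below by a positive constant on some compact subinterval of $(b',\bar b)$ on which $\bar b-y$ is also bounded below by a positive constant. This contradiction forces $L=\bar b$.

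For (3), recall $h^{(r)}(0+)=r\bigl[W^{(q)\prime}(0+)+\Phi(q+r) W^{(q)}(0)\bigr]-\Phi^2(q+r)$. If $W^{(q)\prime}(0+)<\infty$, then letting $r\downarrow 0$ gives $h^{(r)}(0+)\to-\Phi^2(q)<0$, so $h^{(r)}(0+)<0$ and hence $b^*_r=0$ for all sufficiently small $r$, by \eqref{b_zero_criteria}. If $W^{(q)\prime}(0+)=\infty$, then $h^{(r)}(0+)=\infty>0$ for every $r>0$, so $b^*_r>0$ for all $r$; by part (1), $b^*_r$ decreases, as $r\downarrow 0$, to some $\ell\in[0,\bar b)$ (indeed $\ell\le b^*_1<\bar b$). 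Assume $\ell>0$. From $h^{(r)}(b^*_r)=0$ and \eqref{039}, splitting at $\bar b$, $\int_{b^*_r}^{\bar b} e^{-\Phi(q+r) y}\lvert W^{(q)\prime\prime}(y)\rvert\,\diff y=\int_{\bar b}^{\infty} e^{-\Phi(q+r) y} W^{(q)\prime\prime}(y)\,\diff y$. As $r\downarrow 0$ we have $\Phi(q+r)\downarrow\Phi(q)$; dominated convergence makes the left-hand side converge to the finite number $\int_{\ell}^{\bar b} e^{-\Phi(q) y}\lvert W^{(q)\prime\prime}(y)\rvert\,\diff y$, whereas monotone convergence makes the right-hand side converge to $\int_{\bar b}^{\infty} e^{-\Phi(q) y} W^{(q)\prime\prime}(y)\,\diff y$, which equals $+\infty$ because $e^{-\Phi(q) y} W^{(q)\prime\prime}(y)\to\Phi'(q)\Phi(q)^2>0$ as $y\to\infty$ by Remark \ref{remark_scale_function_completely_monotone}(1). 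This contradiction gives $\ell=0$, i.e.\ $b^*_r\to 0$ as $r\to 0$.

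The crux is the comparison in part (1): replacing $\Phi(q+r_1)$ by $\Phi(q+r_2)$ multiplies $W^{(q)\prime\prime}$ by a positive decreasing factor, which down-weights its positive part (on $(\bar b,\infty)$) relative to its negative part (on $(b^*_{r_1},\bar b)$) and hence turns a vanishing integral into a nonpositive one; this one observation, together with Proposition \ref{prop_b_existence}, drives all three parts. The step I expect to require the most care is the asymptotic control of $W^{(q)\prime\prime}$ at infinity through the completely monotone decomposition of Remark \ref{remark_scale_function_completely_monotone}(1), which legitimizes the limit interchanges above and, in particular, pins down that $\int_{\bar b}^{\infty} e^{-\Phi(q) y} W^{(q)\prime\prime}(y)\,\diff y$ diverges while its $\Phi(q+r)$-analogues with $r>0$ are finite.
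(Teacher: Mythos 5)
Your proof is correct, but it takes a genuinely different route from the paper's in each part. For (1), the paper integrates \eqref{039} by parts to get the normalized function $\tilde h^{(r)}(b) = -\bigl[W^{(q)\prime\prime}(b) + \int_0^\infty e^{-\Phi(q+r)y}W^{(q)\prime\prime\prime}(y+b)\,\diff y\bigr]$ and reads off monotonicity in $r$ pointwise in $b$ from $W^{(q)\prime\prime\prime}>0$; you instead keep \eqref{039} as is and run a weighted-comparison argument at $b=b^*_{r_1}$, exploiting that the extra decreasing factor $e^{-(\Phi(q+r_2)-\Phi(q+r_1))y}$ down-weights the positive part of $W^{(q)\prime\prime}$ on $(\bar b,\infty)$ relative to the negative part on $(b^*_{r_1},\bar b)$ — a clean correlation-type inequality that turns the vanishing integral into a nonpositive one. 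For (2), the paper computes $\int_0^\infty e^{-\Phi(q+r)y}W^{(q)\prime\prime\prime}(y+b^*_\infty)\,\diff y$ explicitly via the decomposition of Remark \ref{remark_scale_function_completely_monotone}(1) and shows $\tilde h^{(r)}(b^*_\infty)>0$ for large $r$; your tail bound of order $(\Phi(q+r)-\Phi(q))^{-1}$ against the exploding negative contribution on $(b',\bar b)$ yields an equivalent contradiction. For (3) with $W^{(q)\prime}(0+)=\infty$, the paper's argument is shorter: it notes directly from the first equality of \eqref{ot_cond} that $\lim_{r\to0}h^{(r)}(b)=-\Phi(q)^2<0$ for each fixed $b>0$, which already forces $b^*_r<b$ eventually; your divergence argument (finiteness of the left side versus $\int_{\bar b}^\infty e^{-\Phi(q)y}W^{(q)\prime\prime}(y)\,\diff y=\infty$) reaches the same conclusion with more machinery. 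Both approaches rest on the same two pillars — the sign pattern of $W^{(q)\prime\prime}$ around $\bar b$ and the exponential-plus-completely-monotone decomposition of $W^{(q)}$ — so the difference is mainly that the paper's single integration by parts makes (1) and (2) near-immediate, while your version avoids introducing $W^{(q)\prime\prime\prime}$ and the auxiliary $\tilde h^{(r)}$ altogether.
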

\begin{proof}


(1) For $b > 0$, integration by parts applied to \eqref{039} and the use of Remark \ref{remark_scale_function_completely_monotone} (which implies $e^{-\Phi(q+r)y} W^{(q)\prime \prime}(y) \xrightarrow{y \rightarrow \infty} 0$) give
\begin{align*}
h^{(r)}(b)
&= - \frac r {\Phi(q+r)} \Big[ e^{-\Phi(q+r) b} W^{(q)\prime \prime} (b) +\int_b^\infty e^{-\Phi(q+r) y} W^{(q)\prime \prime \prime} (y) \diff y\Big].
\end{align*}
Because the third derivative $W^{(q)\prime \prime \prime}$ is always positive as described in Remark \ref{remark_scale_function_completely_monotone} (2),
\begin{align*}
\tilde{h}^{(r)} (b) := \frac  {\Phi(q+r)} r   e^{\Phi(q+r) b} h^{(r)}(b)
&= -  \Big[ W^{(q)\prime \prime} (b) +\int_0^\infty e^{-\Phi(q+r) y} W^{(q)\prime \prime \prime} (y+b) \diff y\Big] \\ &< - W^{(q) \prime \prime} (b), \quad b > 0.
\end{align*}
By Remark \ref{remark_scale_function_completely_monotone} (2) and because $r \mapsto \Phi(q+r)$ is increasing, $r \mapsto \tilde{h}^{(r)}(b)$ is increasing for all $b > 0$.
This directly implies that $b^*_r = \sup \{ b > 0: \tilde{h}^{(r)}(b) > 0 \}$ (with $\sup \emptyset = 0$) is increasing in $r$. 

(2) When $\bar{b} = 0$, the convergence is immediate because $b^*_r = 0$ for all $r > 0$.  Hence, we assume that $\bar{b} > 0$.

By considering (1) and because $b_r^* < \bar{b}$ for all $r  > 0$ as in \eqref{b_star_b_bar}, there exists 
\begin{align*}
0 \leq b^*_\infty := \lim_{r \rightarrow \infty} b^*_r \leq \bar{b}.
\end{align*}

To show $b^*_\infty = \bar{b}$, assume, to derive a contradiction, that $b^*_\infty < \bar{b}$. This and Remark \ref{remark_scale_function_completely_monotone} (2) imply that $W^{(q)\prime \prime} (b^*_\infty) < 0$. 

(i) Suppose $b^*_\infty > 0$.  
By Remark \ref{remark_scale_function_completely_monotone} (1) and an application of Fubini's Theorem, we have 
\begin{align*}
	\int_0^{\infty}e^{-\Phi(q+r)y}W^{(q)\prime\prime\prime}(y+b^*_{\infty}) \diff y &=\Phi'(q)\Phi^3(q)\int_0^{\infty}e^{-\Phi(q+r)y}e^{\Phi(q)(y+b^*_{\infty})}\diff y\\
	&+\int_0^{\infty}\int_0^{\infty}t^{3}e^{-t(y+b^*_{\infty})}e^{-\Phi(q+r)y}\mu^{(q)}(\diff t ) \diff y \\
&=\frac{\Phi'(q)\Phi^3(q)}{(\Phi(q+r)-\Phi(q))}e^{\Phi(q)b^*_{\infty}}+\int_0^{\infty}\frac{t^3}{t+\Phi(q+r)}e^{-tb^*_{\infty}}\mu( \diff t),
\end{align*}	
which is finite for any $r>0$ and vanishes in the limit as $r\to\infty$, because $b^*_{\infty}\in(0,\infty)$ and $\mu$ is a finite measure.
Hence, we can take a sufficiently large $r'$, such that
\begin{align*}
\tilde{h}^{(r')} (b^*_\infty) &= -  \Big[ W^{(q)\prime \prime} (b^*_\infty) +\int_0^\infty e^{-\Phi(q+r') y} W^{(q)\prime \prime \prime} (y+b^*_\infty) \diff y\Big]
\end{align*}
is positive. However, this contradicts with $\tilde{h}^{(r)} (b^*_\infty) \leq 0$ for all $r > 0$ (which is implied by $b^*_\infty \geq b^*_r$ and Proposition \ref{prop_b_existence} (i)).
Hence, we must have $b^*_\infty = \bar{b}$ for the case $b^*_\infty > 0$.

(ii) Suppose $b_\infty^* = 0$. In this case, $h^{(r)}$ (and hence $\tilde{h}^{(r)}$ as well) is uniformly negative for all $r > 0$ by  Proposition \ref{prop_b_existence} (ii). 
Then, the assumption ($0 = b^*_\infty < \bar{b}$) implies $W^{(q)\prime \prime} (x) < 0$ on $(0, \bar{b})$ in view of Remark  \ref{remark_scale_function_completely_monotone} (2). Take any $0 < \epsilon < \bar{b}$, then we have
\begin{align*}
\tilde{h}^{(r')} (\epsilon) &= -  \Big[ W^{(q)''} (\epsilon) +\int_0^\infty e^{-\Phi(q+r') y} W^{(q)\prime \prime \prime} (y+\epsilon) \diff y\Big],
\end{align*}
which can be shown to be positive for a sufficiently large $r'$, using the same argument as that in (i); this contradicts with the uniform negativity of $\tilde{h}^{(r')}$. Hence, we must have $\bar{b}= 0 = b^*_\infty$.

(3) In the case $W^{(q)\prime}(0 +) < \infty$, by taking $r\to 0$ in  \eqref{b_zero_criteria}, we see that $h^{(r)}(0+) < 0$, and hence $b_r^*=0$ for small enough $r>0$. 
\par In the case $W^{(q)\prime}(0 +) = \infty$, by using the first equality of \eqref{ot_cond},  we have, for $b > 0$,
\begin{align*}
\lim_{r\to0}h^{(r)}(b)=-\Phi(q)^{2}<0.	
\end{align*}	
Hence, for any fixed $b >0$, we have $h^{(r)}(b)<0$ (and hence $b_r^* < b$ by the form of $h^{(r)}$) for sufficiently small $r>0$; this shows that $b_r^* \xrightarrow{r \downarrow 0} 0$.
\end{proof}

We now show the convergence of $v^{(r)}$ to the value function in the classical case $\bar{v}$ as described in \eqref{classical_value_function}.
	\begin{proposition}
	As $r \rightarrow \infty$, $v^{(r)}(x)$ converges to  $\bar{v}(x)$ for all $x \geq 0$.
	\end{proposition}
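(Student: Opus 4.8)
The plan is to establish the pointwise convergence $v^{(r)}(x) \to \bar v(x)$ by splitting into the two regimes $b^*_\infty = 0$ and $b^*_\infty = \bar b > 0$ dictated by Lemma \ref{lemma_convergence_r_inf}, and in each regime to pass to the limit in the explicit scale-function representation \eqref{vf5} (resp.\ \eqref{vf0}) using the decomposition of $W^{(q)}$ in Remark \ref{remark_scale_function_completely_monotone} (1) together with the relation \eqref{W_q_W_q_r_relation} between $W^{(q+r)}$ and $W^{(q)}$. The main input is that $b^*_r \to \bar b$ (Lemma \ref{lemma_convergence_r_inf} (2)), so the candidate barrier converges to the classical optimal barrier; then it remains to show the value functions built on these barriers converge. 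Since in the classical case $\bar v(x) = W^{(q)}(x)/W^{(q)\prime}(\bar b)$ for $x \le \bar b$ and is linear with slope one above $\bar b$, and since for $x \le b^*_r$ we already have $v^{(r)}(x) = W^{(q)}(x)/W^{(q)\prime}(b^*_r)$ by \eqref{value_function_below_b_star}, the region $x \le \bar b$ is essentially immediate from continuity of $W^{(q)\prime}$ at $\bar b$ and $b^*_r \to \bar b$. The real work is the region $x > \bar b$.

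For $x$ fixed with $x > \bar b$ (so eventually $x > b^*_r$), I would start from \eqref{vf5} and analyze each term as $r \to \infty$. Using \eqref{W_q_W_q_r_relation}, one can rewrite $r\int_0^{x-b^*_r} W^{(q+r)}(x-b^*_r-y) W^{(q)}(y+b^*_r)\,\diff y$ and the $\overline W^{(q+r)}$, $\overline{\overline W}^{(q+r)}$ terms purely in terms of $W^{(q)}$ and iterated convolutions with $rW^{(q+r)}$; the key analytic fact is that $rW^{(q+r)}(y)\,\diff y$ behaves like an approximate identity concentrating mass near $0$ as $r \to \infty$ (because $\int_0^\infty r e^{-\Phi(q+r) y} W^{(q+r)}(y)\cdots$, or more directly because $W^{(q+r)}$ has Laplace transform $1/(\psi(\theta)-q-r)$, so $r/(\psi(\theta)-q-r)\to -1$... ), which forces the convolution terms to collapse to boundary values. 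Concretely I expect
\begin{align*}
r\int_0^{x-b^*_r} W^{(q+r)}(x-b^*_r-y) W^{(q)}(y+b^*_r)\,\diff y &\longrightarrow 0, \\
r\, W^{(q)}(b^*_r)\,\overline{W}^{(q+r)}(x-b^*_r) &\longrightarrow 0, \qquad r\,\overline{\overline W}^{(q+r)}(x-b^*_r) \longrightarrow -(x-\bar b) + \text{(a bounded term)},
\end{align*}
so that \eqref{vf5} tends to $W^{(q)}(\bar b)/W^{(q)\prime}(\bar b) + (x-\bar b) = \bar v(x)$. To make these limits rigorous I would use the exponential-times probabilistic interpretation: by \eqref{v_b_star_rewritten}, $v^{(r)}(x)$ is an expectation over $\mathbf{e}_r$ and $(X(\mathbf{e}_r),\underline X(\mathbf{e}_r))$, and as $r\to\infty$, $\mathbf{e}_r \to 0$ and $X(\mathbf{e}_r)\to 0$, $\underline X(\mathbf{e}_r)\to 0$ in probability, so one expects $v^{(r)}(x) \to v^{(\infty)}$ where the limit is characterised by the classical reflection at $b^*_\infty = \bar b$; a dominated convergence argument, using the uniform bound from the computation establishing \eqref{K_bounds} (namely $v^{(r)}(x) \le x + \Phi(r)^{-1} + \sup_{0\le y\le b^*_r} v^{(r)}(y)$, and $\Phi(r)^{-1}\to 0$, $\sup_{y\le b^*_r} v^{(r)}(y) = W^{(q)}(b^*_r)/W^{(q)\prime}(b^*_r)\to W^{(q)}(\bar b)/W^{(q)\prime}(\bar b)$), would close this.

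For the case $\bar b = 0$ (so $b^*_r = 0$ for all $r$ and $\bar v(x) = W^{(q)}(x)/W^{(q)\prime}(0+) = x$ since $W^{(q)\prime}(0+)=\infty$ forces... actually $\bar v(x)=x$ when $\bar b=0$), I would use \eqref{vf0} directly: as $r\to\infty$, $\Phi(q+r)\to\infty$, and each term is estimated by the same approximate-identity / Laplace-transform asymptotics, giving $v_0^{(r)}(x)\to x = \bar v(x)$. The main obstacle I anticipate is the rigorous justification of the ``$rW^{(q+r)}$ acts as an approximate identity'' heuristic uniformly enough to pass the limit inside the convolution integrals in \eqref{vf5}; I would handle this either via the probabilistic representation \eqref{v_b_star_rewritten} and weak convergence of $(X(\mathbf e_r),\underline X(\mathbf e_r),\overline X(\mathbf e_r))$ to $(0,0,0)$ together with the uniform integrability furnished by the bound used for \eqref{K_bounds}, or analytically by inverting the Laplace transforms term by term and invoking dominated convergence with the explicit representation of $W^{(q)}$ from Remark \ref{remark_scale_function_completely_monotone} (1). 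The probabilistic route is cleaner and is the one I would pursue, reducing everything to: $v^{(r)}(x)$ is monotone-ish in $r$, bounded above by an $r$-uniform envelope, and its limit must satisfy the classical Bellman equation whose unique admissible solution is $\bar v$.
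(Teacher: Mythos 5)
Your overall architecture (split at $x\le\bar b$ versus $x>\bar b$, feed in $b^*_r\nearrow\bar b$ from Lemma \ref{lemma_convergence_r_inf}, pass to the limit in the scale-function formula, and use probabilistic representations plus dominated convergence) matches the paper's, and the region $x<\bar b$ is handled exactly as you say. But the heart of the argument — $x>\bar b$ — has a genuine gap: the termwise limits you ``expect'' in \eqref{vf5} are false. For fixed $y>0$, $e^{-\Phi(q+r)y}W^{(q+r)}(y)\to\psi'(\Phi(q+r))^{-1}$ with $\Phi(q+r)\to\infty$, so $W^{(q+r)}(y)$, $\overline W^{(q+r)}(y)$ and $\overline{\overline W}^{(q+r)}(y)$ all blow up as $r\to\infty$. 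Consequently $r\int_0^{x-b}W^{(q+r)}(x-b-y)W^{(q)}(y+b)\,\diff y\ge W^{(q)}(b)\,r\overline W^{(q+r)}(x-b)\to\infty$ (not $0$), $rW^{(q)}(b)\overline W^{(q+r)}(x-b)\to\infty$ (not $0$), and $r\overline{\overline W}^{(q+r)}(x-b)\to+\infty$ (not $-(x-\bar b)$ plus a bounded term). The formula \eqref{vf5} converges only through massive cancellation of these exponentially growing pieces, and the ``$rW^{(q+r)}$ is an approximate identity'' heuristic cannot justify termwise passage: its Laplace transform $r/(\psi(\theta)-q-r)\to-1$ only for $\theta>\Phi(q+r)\to\infty$, and a positive measure cannot converge to $-\delta_0$. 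The missing idea is the correct regrouping: the paper splits \eqref{vf5} into two specific combinations $A^{(r)}(x,b)$ and $B^{(r)}(x-b)$ that admit bounded probabilistic representations via Lemmas 5.1--5.2 of \cite{PYM}, namely $\tilde A^{(r)}(x,b)=\E_{x-b}(e^{-q\mathbf e_r};\mathbf e_r<\tau_0^-)+\E_{x-b}\bigl(e^{-(q+r)\tau_0^-}W^{(q)}(X(\tau_0^-)+b)/W^{(q)}(b);\tau_0^-<\infty\bigr)$ and $B^{(r)}(x-b)=\E_{x-b}(e^{-q\mathbf e_r}X(\mathbf e_r);\mathbf e_r<\tau_0^-)$, which converge (uniformly in $b$ over compacts, to accommodate the moving barrier $b^*_r$) to $1$ and $x-b$ by dominated convergence.

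Two further problems. First, your parenthetical ``$\bar v(x)=x$ when $\bar b=0$'' is wrong for bounded-variation processes with $\Pi(-\infty,0)<\infty$: there $W^{(q)}(0)=1/c>0$ and $W^{(q)\prime}(0+)<\infty$, so $\bar v(x)=W^{(q)}(0)/W^{(q)\prime}(0+)+x>x$; the paper needs a separate asymptotic computation of $\Phi(q+r)-r/c$ and an absence-of-creeping argument to recover the constant term. Second, your fallback — ``the limit must satisfy the classical Bellman equation whose unique admissible solution is $\bar v$'' — presupposes both that the limit exists (monotonicity of $r\mapsto v^{(r)}(x)$ is plausible by thinning but is established nowhere) and a uniqueness theorem for the limiting equation; neither is supplied. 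Finally, $x=\bar b$ needs its own squeeze argument (using $v^{(r)}(\bar b)\ge v^{(r)}(b^*_r)$ and $\mathcal A_r\subset\mathcal A_\infty$), since $b^*_r<\bar b$ strictly for every $r$ and so $\bar b$ never lies below the barrier.
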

	\begin{proof}

	(i) Suppose $\bar{b} > 0$.  
	
	Fix $0 \leq x < \bar{b}$.  Because $b^*_r$ increases to $\bar{b}$ by Lemma \ref{lemma_convergence_r_inf}, we can choose a sufficiently large $\bar{r}$ such that for all $r > \bar{r}$, $b_r^* > x$, and hence $v^{(r)}(x) = W^{(q)}(x)/ W^{(q) \prime}(b^*_r)$.  This converges to $\bar{v}(x) = W^{(q)}(x)/ W^{(q) \prime}(\bar{b})$ by Lemma \ref{lemma_convergence_r_inf} and because $W^{(q)\prime}$ is continuous by Remark \ref{remark_scale_function_completely_monotone}.
	
	Fix $x = \bar{b}$.  Then we have $v^{(r)} (b^*_r) = W^{(q)}(b^*_r)/ W^{(q) \prime}(b^*_r)$, which is increasing in $r$ by the monotonicity of $b^*_r$ (as in Lemma \ref{lemma_convergence_r_inf}) and that of the mapping $y \mapsto W^{(q)}(y)/ W^{(q) \prime}(y)$ (by (8.22) of \cite{K}).  This together with the monotonicity of $v^{(r)}$ in $x$ (by Proposition \ref{prop_slope_condition}) gives
	\begin{align*}
	v^{(r)} (\bar{b})  > v^{(r)}(b^*_r) =  \frac {W^{(q)}(b^*_r)} {W^{(q) \prime}(b^*_r)} \xrightarrow{r \uparrow \infty} \frac {W^{(q)}(\bar{b})} {W^{(q) \prime}(\bar{b})} = \bar{v}(\bar{b}).
	\end{align*}
	On the other hand, $v^{(r)} (\bar{b}) \leq \bar{v}(\bar{b})$ for all $r > 0$ (because $\mathcal{A}_r \subset \mathcal{A}_\infty$), 
	and thus $v^{(r)} (\bar{b}) \xrightarrow{r \uparrow \infty} \bar{v} (\bar{b})$.

	Fix $x > \bar{b} > 0$ and $r$ sufficiently large such that $b^*_r > 0$. First, we can write \eqref{vf5} as
	\begin{align*}
	v^{(r)}(x)
	&=\frac{r A^{(r)}(x, b^*_r)}{\Phi(q+r)Z^{(q)\prime}(b^*_r,\Phi(q+r))}  + B^{(r)}(x-b^*_r) =\frac{A^{(r)}(x, b^*_r)}{W^{(q)\prime}(b^*_r)}+ B^{(r)}(x-b^*_r) 
	\end{align*}
where
\begin{align*}
A^{(r)}(y, b) &:=W^{(q)}(y)+r\int_0^{y-b}W^{(q+r)}(y-b-z)W^{(q)}(z+b) \diff z \\ &-r  W^{(q)}(b)  \overline{W}^{(q+r)}(y-b) -W^{(q+r)}(y-b)\frac{Z^{(q)\prime}(b,\Phi(q+r))}{\Phi(q+r)}, \\
B^{(r)}(y) &:=r\left(\frac{1}{\Phi^2(q+r)}W^{(q+r)}(y)-\overline{\overline{W}}^{(q+r)}(y)\right).
\end{align*}
We also set $\tilde{A}^{(r)}(y, b) := A^{(r)}(y, b)/W^{(q)}(b)$.


Fix $0 < b_0 < \bar{b} < x$. By using the limiting case of Lemma 5.1 in \cite{PYM} (where the limit can be easily obtained by Lemma B.3 of \cite{PYM} and monotone convergence), we have 
	\begin{align} \label{identity_A}
	\tilde{A}^{(r)}(x, b) = \E_{x-b}(e^{-q\mathbf{e}_r};\mathbf{e}_r<\tau_0^-)+\E_{x-b}\left(e^{-(q+r)\tau_0^-}\frac{W^{(q)}(X(\tau_0^-)+b)}{W^{(q)}(b)};\tau_0^-<\infty\right).
	\end{align}
This gives a bound:
	\begin{align*}
		\E_{x-\bar{b}}(e^{-q\mathbf{e}_r};\mathbf{e}_r<\tau_0^-)\leq \tilde{A}^{(r)}(x, b)
		\leq \E_{x-b_0}(e^{-q\mathbf{e}_r};\mathbf{e}_r<\tau_0^-)+\E_{x-\bar{b}}\left(e^{-(q+r)\tau_0^-} \right), \quad b_0 < b < \bar{b}.
	\end{align*}		
	Notice that the dominated convergence theorem gives
	\begin{align} \label{lim_e_r_less_tau}
	\E_y(e^{-q\mathbf{e}_r};\mathbf{e}_r<\tau_0^-) 
	\xrightarrow{r \uparrow \infty} 1, \quad y > 0,
	\end{align}
and $\E_{x-\bar{b}} (e^{-(q+r)\tau_0^-} )  \xrightarrow{r \uparrow \infty} \p_{x - \bar{b}} (\tau_0^-  = 0)= 0$.
Hence, $\sup_{b_0 \leq b \leq \bar{b}}\tilde{A}^{(r)}(x, b) \xrightarrow{r \uparrow \infty}1$. 
	
	In contrast, by (the limiting case of) Lemma 5.2 in \cite{PYM}, which holds for any spectrally negative L\'evy process, we have
	\begin{align*}
	B^{(r)}(x-b) = \E_{x-b}\left(e^{-q\mathbf{e}_r}X(\mathbf{e}_r);\mathbf{e}_r<\tau_0^-\right).
	\end{align*}
This gives
	\begin{align*}
	B^{(r)}(x-b) - (x-b) &= \E_{x-b}\left(e^{-q\mathbf{e}_r}(X(\mathbf{e}_r)-(x-b));\mathbf{e}_r<\tau_0^-\right) - (x-b) (1 - \E_{x-b}\left(e^{-q\mathbf{e}_r};\mathbf{e}_r<\tau_0^-\right)) \\
	&= \E\left(e^{-q\mathbf{e}_r} X(\mathbf{e}_r);\mathbf{e}_r<\tau_{-(x-b)}^-\right) - (x-b) (1 - \E_{x-b}\left(e^{-q\mathbf{e}_r};\mathbf{e}_r<\tau_0^-\right)).
	\end{align*}
	Hence, 
	\begin{align*}
	&\inf_{b_0 \leq b \leq \bar{b}} (B^{(r)}(x-b) - (x-b)) \\ &\geq \E \left(e^{-q\mathbf{e}_r} X(\mathbf{e}_r); X(\mathbf{e}_r) < 0, \mathbf{e}_r<\tau_{-(x-b_0)}^-\right) - (x-b_0) (1 - \E_{x-\bar{b}}\left(e^{-q\mathbf{e}_r};\mathbf{e}_r<\tau_0^-\right)) \\ &\xrightarrow{r \uparrow \infty} 0,
	\end{align*}
	where the convergence holds by \eqref{lim_e_r_less_tau} and because the fact that $|X(\mathbf{e}_r)|\leq (x-b_0)$ on the event $\{X(\mathbf{e}_r) < 0, \mathbf{e}_r<\tau_{-(x-b_0)}^-\}$, implies, by dominated convergence, that	\begin{align*} 
	\E \left(e^{-q\mathbf{e}_r} X(\mathbf{e}_r); X(\mathbf{e}_r) < 0, \mathbf{e}_r<\tau_{-(x-b_0)}^-\right) 
	\xrightarrow{r \uparrow \infty} 0.
	\end{align*}
	On the other hand, 
	\begin{align}
	\sup_{b_0 \leq b \leq \bar{b}} (B^{(r)}(x-b) - (x-b)) \leq \E\left(e^{-q\mathbf{e}_r} \overline{X}(\mathbf{e}_r);\mathbf{e}_r<\tau_{-(x-b_0)}^-\right) \leq  \E\left(\overline{X}(\mathbf{e}_r) \right) \xrightarrow{r \uparrow \infty} 0, \label{B_conv}
	\end{align}
where the last limit holds because $\overline{X}(\mathbf{e}_r)$ is exponentially distributed with parameter $\Phi(r)$.
	Hence, $\sup_{b_0 \leq b \leq \bar{b}} |B^{(r)}(x-b)-(x-b)| \xrightarrow{r \uparrow \infty} 0$.	
Now, by using these uniform convergence results together with $b^*_r \nearrow \bar{b}$, 
	\begin{align*}
	v^{(r)}(x)
	 =\frac{W^{(q)}(b^*_r)}{W^{(q)\prime}(b^*_r)} \tilde{A}^{(r)}(x, b^*_r)+ B^{(r)}(x-b^*_r) \xrightarrow{r \uparrow \infty} \frac{W^{(q)}(\bar{b})}{W^{(q)\prime}(\bar{b})} + x-\bar{b}, 
	\end{align*}
	as desired.


	(ii) Suppose $\bar{b} = 0$.  Notice in this case that $b^*_r = 0$ for all $r > 0$, and thus it suffices to show the (pointwise)  convergence of 
		\begin{align*}
	v_{0}^{(r)}(x)
	&:=\frac{r A^{(r)}(x, 0) }{\Phi(q+r)(\Phi(q+r) - r W^{(q)}(0))} + B^{(r)}(x).	\end{align*}
Here, $B^{(r)}(x) \xrightarrow{r \uparrow \infty} x$ as a special case of \eqref{B_conv} when $x > 0$.  This also holds for $x = 0$ because $0 \leq B^{(r)}(0) = \E\left(e^{-q\mathbf{e}_r}X(\mathbf{e}_r);\mathbf{e}_r<\tau_0^-\right) \leq \E\left(\overline{X}(\mathbf{e}_r) \right) \xrightarrow{r \uparrow \infty} 0$.

On the other hand, by using \eqref{W_q_W_q_r_relation}, we have
\begin{align*}
A^{(r)}(x, 0) &=W^{(q)}(x)+r\int_0^{x}W^{(q+r)}(x-y)W^{(q)}(y) \diff y -r  W^{(q)}(0)  \overline{W}^{(q+r)}(x) \\ &-W^{(q+r)}(x)\frac{\Phi(q+r) - r W^{(q)}(0)}{\Phi(q+r)} \\
&=r W^{(q)}(0)  \Big(- \overline{W}^{(q+r)}(x) +\frac{ W^{(q+r)}(x)}{\Phi(q+r)} \Big).
\end{align*}
Suppose $X$ is of unbounded variation.  Then, $A^{(r)}(x,0) = 0$, and hence it is clear that $v_0^{(r)}(x) \xrightarrow{r \uparrow \infty} x = \bar{v}(x)$, as desired.

Suppose $X$ is of bounded variation (then by Remark \ref{remark_cond_b_zero}, we have $\Pi(-\infty,0)<\infty$).  Since $q+r = \psi(\Phi(q+r)) = c \Phi(q+r) +\int_{(-\infty,0)}\big( {\rm e}^{\Phi(q+r) z}-1\big)\Pi(\ud z)$ by \eqref{psi_bounded_var}, we have by monotone convergence
	\begin{equation*}
		\Phi(q+r) -  \frac r c = \frac q c+  c^{-1}\int_{(-\infty,0)}\big( 1-{\rm e}^{\Phi(q+r) z}\big)\Pi(\ud z) \xrightarrow{r \uparrow \infty} \frac {q + \Pi(-\infty,0)} {c}.
	\end{equation*}
By this and because $\Phi(q+r) c \sim r$ as $r \uparrow \infty$ (see also Remark \ref{remark_smoothness_zero}), we have
\begin{align}
	\begin{split}
	\frac{r}{\Phi(q+r)(\Phi(q+r) - r W^{(q)}(0))}=\frac{r}{\Phi(q+r)(\Phi(q+r) - \frac r c))} \xrightarrow{r \uparrow \infty} \frac {1} {W^{(q)\prime}(0+)}.
\end{split}
	\end{align}
	On the other hand, by noting that $W^{(q)}(0) > 0$, shifting the process by $b$
	in \eqref{identity_A} and taking $b \downarrow 0$, together with the dominated convergence theorem, give
	\begin{align*}
	\E_x(e^{-q\mathbf{e}_r};\mathbf{e}_r<\tau_0^-)+\E_x\left(e^{-(q+r)\tau_0^-}\frac{\lim_{b \downarrow 0}W^{(q)}(X(\tau_b^-))}{W^{(q)}(0)};\tau_0^-<\infty\right) = \frac {A^{(r)}(x,0)} {W^{(q)}(0)}.
	\end{align*}
Because $X$ does not creep downward ($\p_x ( X(\tau_0^-) = 0, \tau_0^- < \infty ) = 0$ for all $x \geq 0$) for the case of bounded variation (see Exercise 7.6 of \cite{K}), the second expectation on the left hand side is zero.  Now taking $r \rightarrow \infty$ on both sides, we have $ {A^{(r)}(x,0)} / {W^{(q)}(0)} \xrightarrow{r \rightarrow \infty} 1$. In conclusion, we have  $v_{0}^{(r)}(x) \xrightarrow{r \uparrow \infty}  W^{(q)}(0)/W^{(q)\prime}(0+)+ x = \bar{v}(x)$, as desired.

%
%
%
%
%
%
	\end{proof}

\section{Numerical Examples} \label{section_numerics}

%

We conclude this paper with a sequence of numerical experiments.  Here, to better understand the sensitivity with respect to each parameter describing the underlying process, we consider a simple case using a (drifted) compound Poisson process with i.i.d.\ exponential-size jumps, which satisfies Assumption \ref{remark_scale_function_completely_monotone}.  Both cases with and without Brownian motions are considered. 

More specifically, we assume, for some $c \in \R$ and $\sigma \geq 0$, 
\begin{equation}
 X(t) - X(0)= c t+\sigma B(t) - \sum_{n=1}^{N(t)} Z_n, \quad 0\le t <\infty, \label{X_phase_type}
\end{equation}
where $B=( B(t); t\ge 0)$ is a standard Brownian motion, $N=(N(t); t\ge 0 )$ is a Poisson process with arrival rate $\kappa$, and  $Z = ( Z_n; n = 1,2,\ldots )$ is an i.i.d.\ sequence of exponential random variables with parameter $\lambda$. The processes $B$, $N$, and $Z$ are assumed mutually independent.  This is a special case of the  spectrally negative version of the \emph{phase-type} \lev process in \cite{Asmussen_2004}, which admits an analytical form of the scale function, as in \cite{Egami_Yamazaki_2010_2}.  We refer the reader to \cite{Egami_Yamazaki_2010_2, KKR} for the forms of the corresponding scale functions.

\subsection{Computation of the value function} We first illustrate the computation scheme of the optimal barrier $b^*$ and the value function $v = v_{b^*}$.  Here, for $X$ in \eqref{X_phase_type}, we consider the following sets of parameters:
\begin{align*}
\textrm{\textbf{Case 1}: } \sigma = 0.2, c=1.5, \quad \textrm{\textbf{Case 2}: } \sigma = 0.2, c=0.1, \quad \textrm{\textbf{Case 3}: } \sigma = 0.2, c=0,
\end{align*}
and 
\begin{align*}
\textrm{\textbf{Case 1'}: } \sigma = 0, c=1.5, \quad \textrm{\textbf{Case 2'}: } \sigma = 0, c=1.15, \quad \textrm{\textbf{Case 3'}: } \sigma = 0, c=0.1.
\end{align*}
For other parameters, we set $\kappa = \lambda = 1$, $r = 0.5$, and $q = 0.05$.  These parameters are chosen so that $b^* > 0$ for \textbf{Cases 1} and \textbf{1'}, $0 = b^* < \bar{b}$ for \textbf{Cases 2} and \textbf{2'}, and $0 = b^* = \bar{b}$ for \textbf{Cases 3} and \textbf{3'}, where we recall that $\bar{b}$ is the optimal barrier in the classical case, as defined in Remark \ref{remark_scale_function_completely_monotone}.

Recall that the optimal barrier $b^*$ is the unique  root of $h=0$ if $h(0+) > 0$ and  zero otherwise.   Figure \ref{figure_h} plots the function $h$ along with the points at $b^*$ and $\bar{b}$.  For \textbf{Cases 1} and \textbf{1'}, $h$ starts at a strictly positive value, decreases until $\bar{b}$, and then increases to zero;  $b^*$ becomes the unique point at which  $h$ vanishes.  For \textbf{Cases 2} and \textbf{2'}  (where $\bar{b} > 0$), $h$ starts at a negative value and then behaves similarly to \textbf{Cases 1} and \textbf{1'}; because $h$ is uniformly negative, we set $b^* = 0$. For \textbf{Cases 3} and \textbf{3'}  (where $\bar{b} = 0$), $h$ starts at a negative value and monotonically increases to zero; again we set $b^* = 0$.

\begin{figure}[htbp]
\begin{center}
\begin{minipage}{1.0\textwidth}
\centering
\begin{tabular}{ccc}
 \includegraphics[scale=0.35]{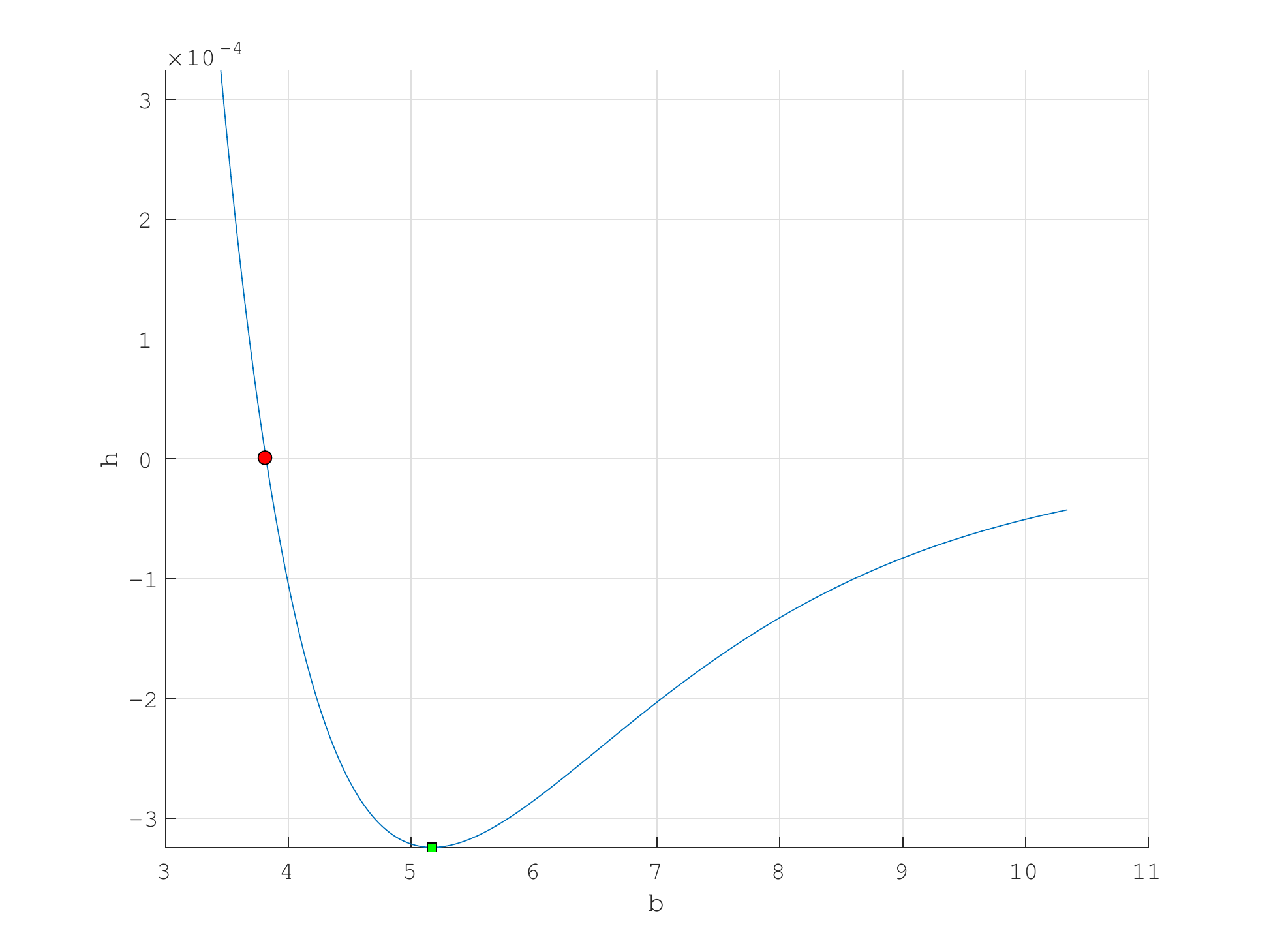} & \includegraphics[scale=0.35]{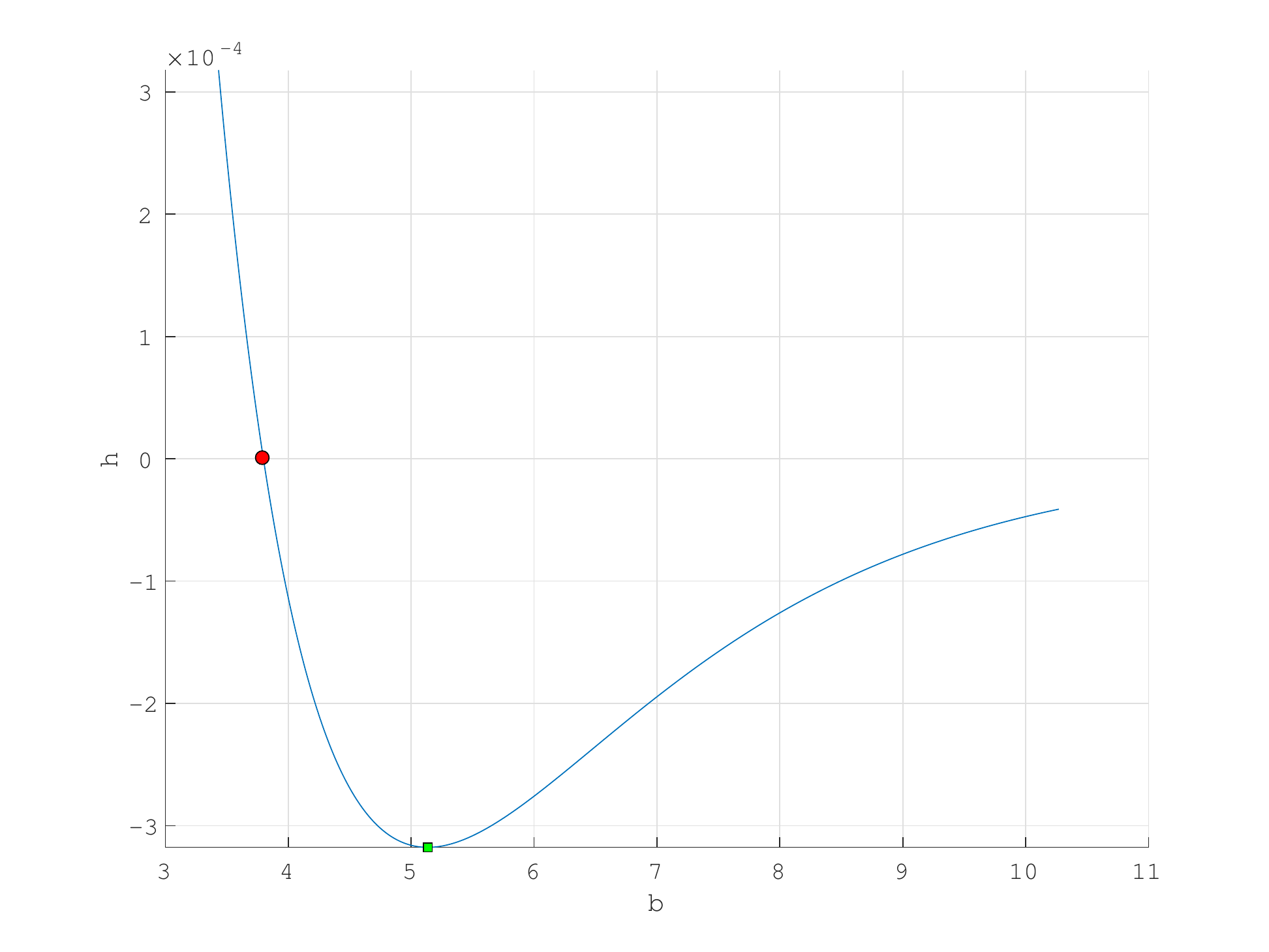}  \\
 Case 1 & Case 1' \\
 \includegraphics[scale=0.35]{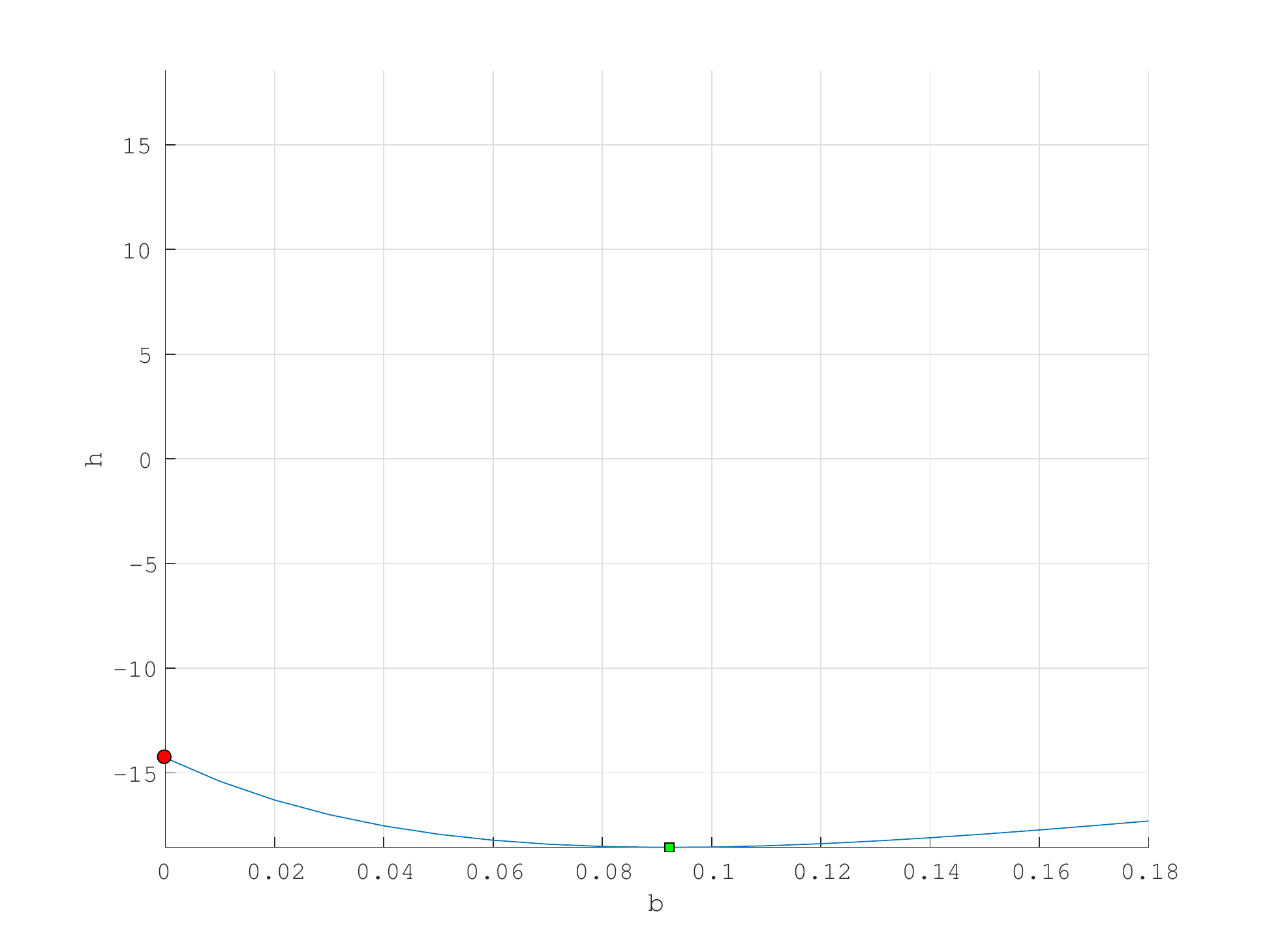} & \includegraphics[scale=0.35]{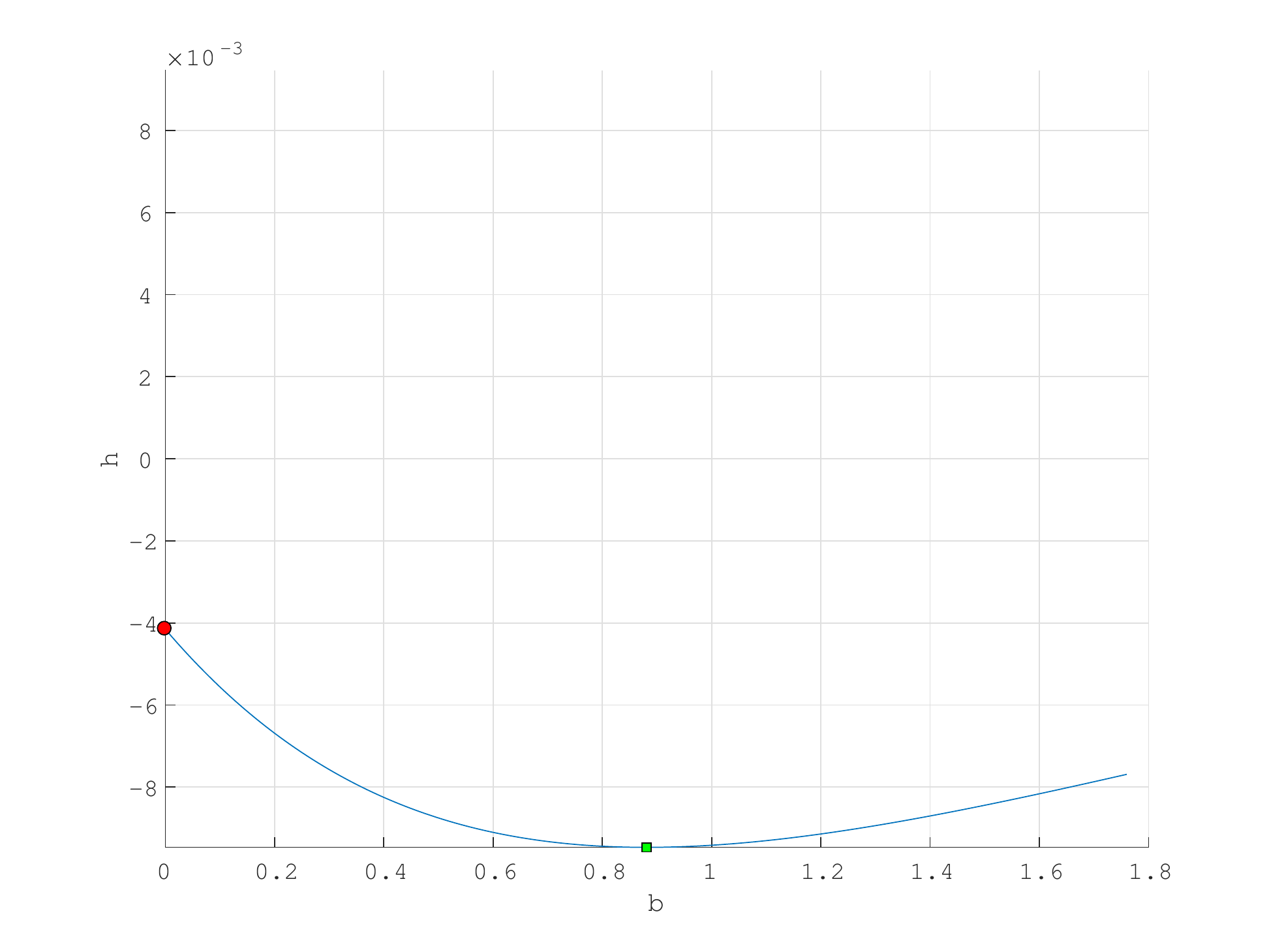}  \\
 Case 2 & Case 2' \\
 \includegraphics[scale=0.35]{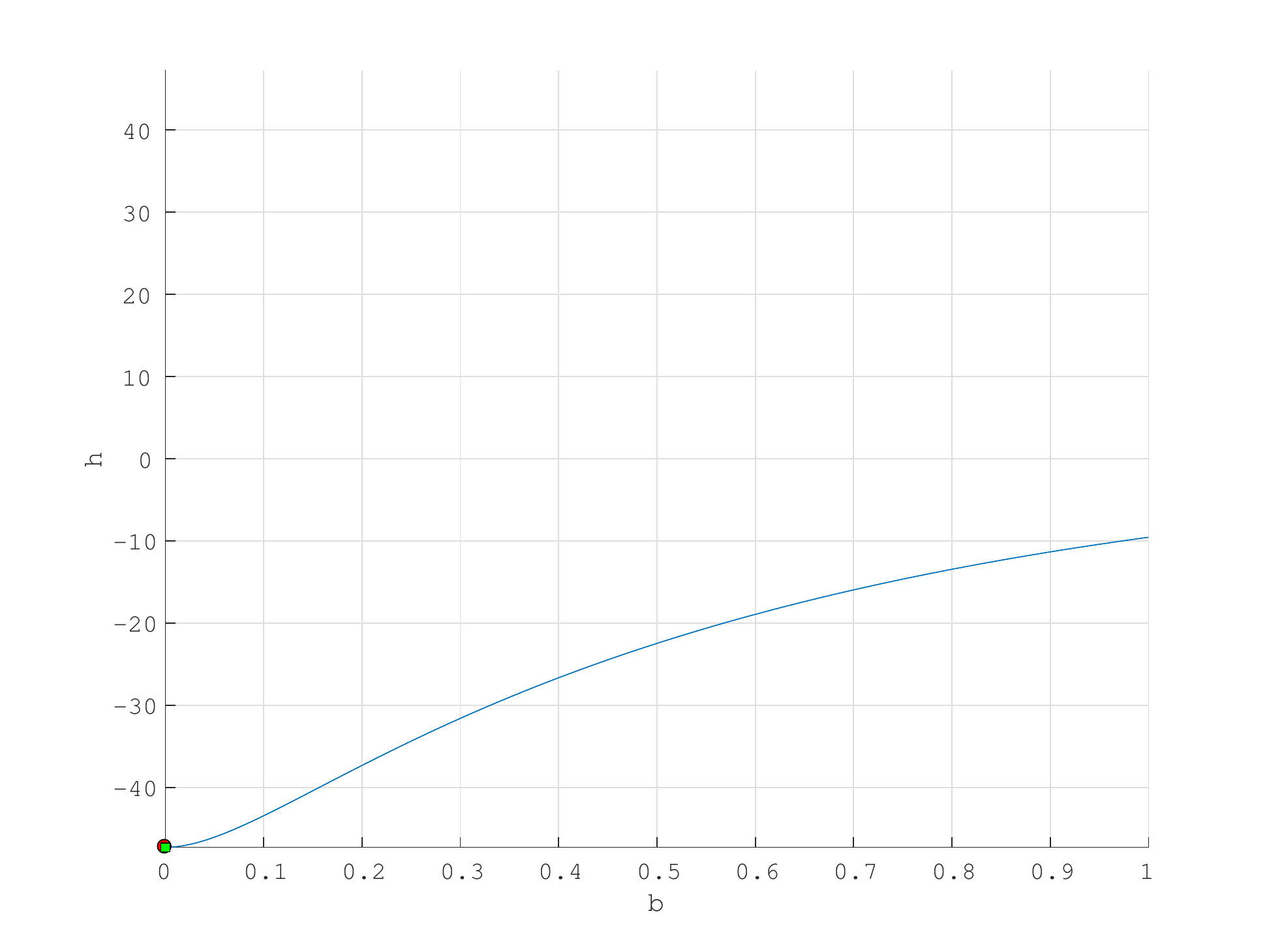} & \includegraphics[scale=0.35]{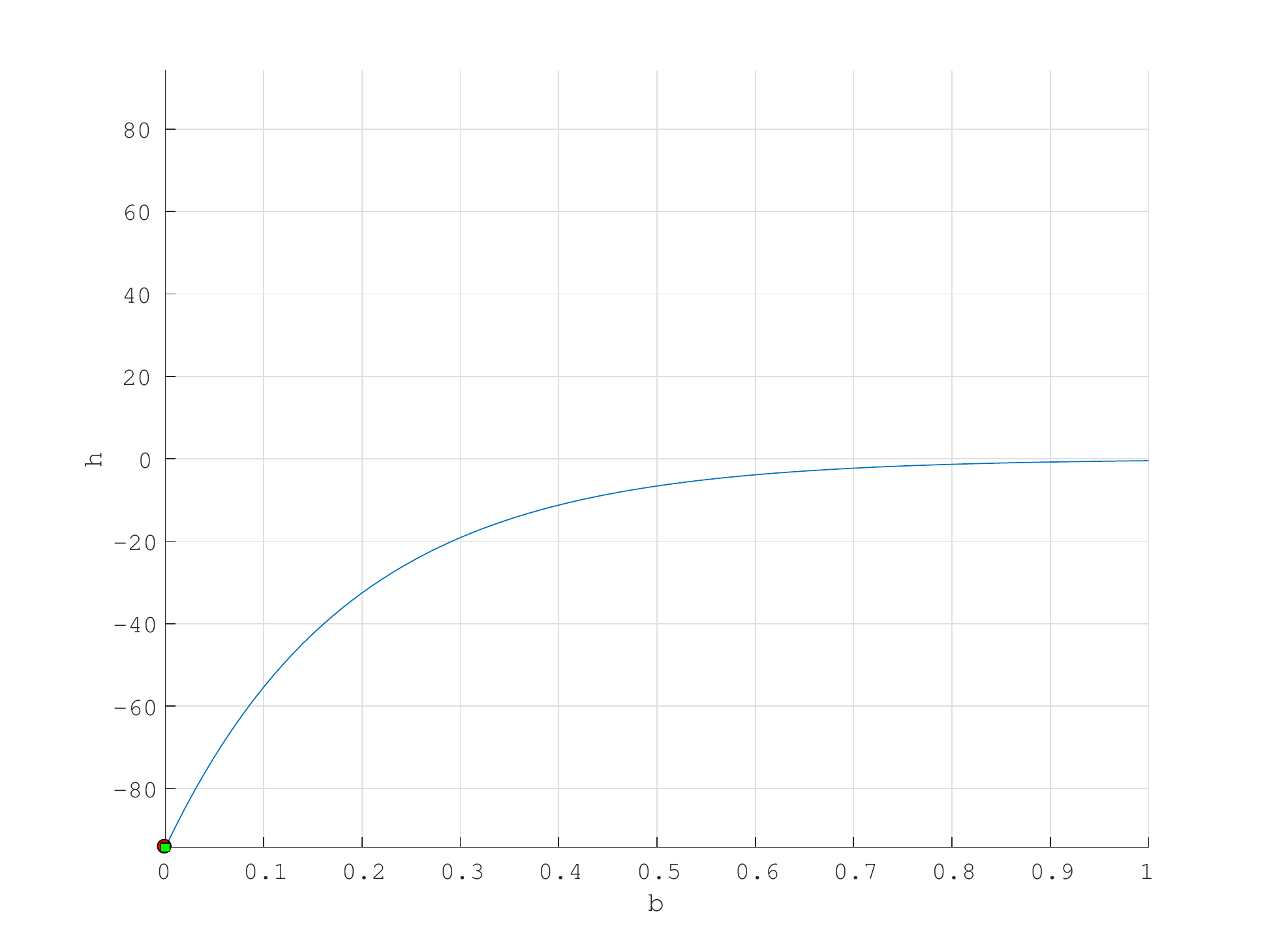}  \\
 Case 3 & Case 3' 
 \end{tabular}
\end{minipage}
\caption{Plots of $h$ and the points at $b^*$ and $\bar{b}$ indicated by circles and squares, respectively. 
} \label{figure_h}
\end{center}
\end{figure}


With the computed values of $b^*$,  the value function $v = v_{b^*}$ is obtained using \eqref{vf5} and \eqref{vf0} for the cases $b^* > 0$ and $b^* = 0$, respectively.  To confirm the optimality, in Figure \ref{figure_v}, we plot $v_{b^*}$ along with suboptimal NPVs $v_{b}$ with $b \neq b^*$.  It can be confirmed in all cases that $v_{b^*}$ dominates $v_{b}$,  for $b \neq b^*$,
 uniformly in $x$.   As shown in Proposition \ref{prop_slope_condition}, $v_{b^*}$ is smooth and its slope is larger than $1$ if and only if $x < b^*$. 
 
 Regarding the comparison between the unbounded and bounded variation cases, the main differences include the degree of smoothness at $b^*$ and the behavior of $v_{b^*}$ in the vicinity of zero.  The degree of smoothness is not visually clear as it is at least twice continuously differentiable in both cases. On the other hand, the difference in the vicinity of zero can be observed:  as the starting value $x$ decreases to zero, $v$ converges to zero for the unbounded variation case (\textbf{Cases 1}, \textbf{2}, and \textbf{3}), but not for the bounded variation case (\textbf{Cases 1'}, \textbf{2'}, and \textbf{3'}).  
 \begin{figure}[htbp]
\begin{center}
\begin{minipage}{1.0\textwidth}
\centering
\begin{tabular}{cc}
 \includegraphics[scale=0.35]{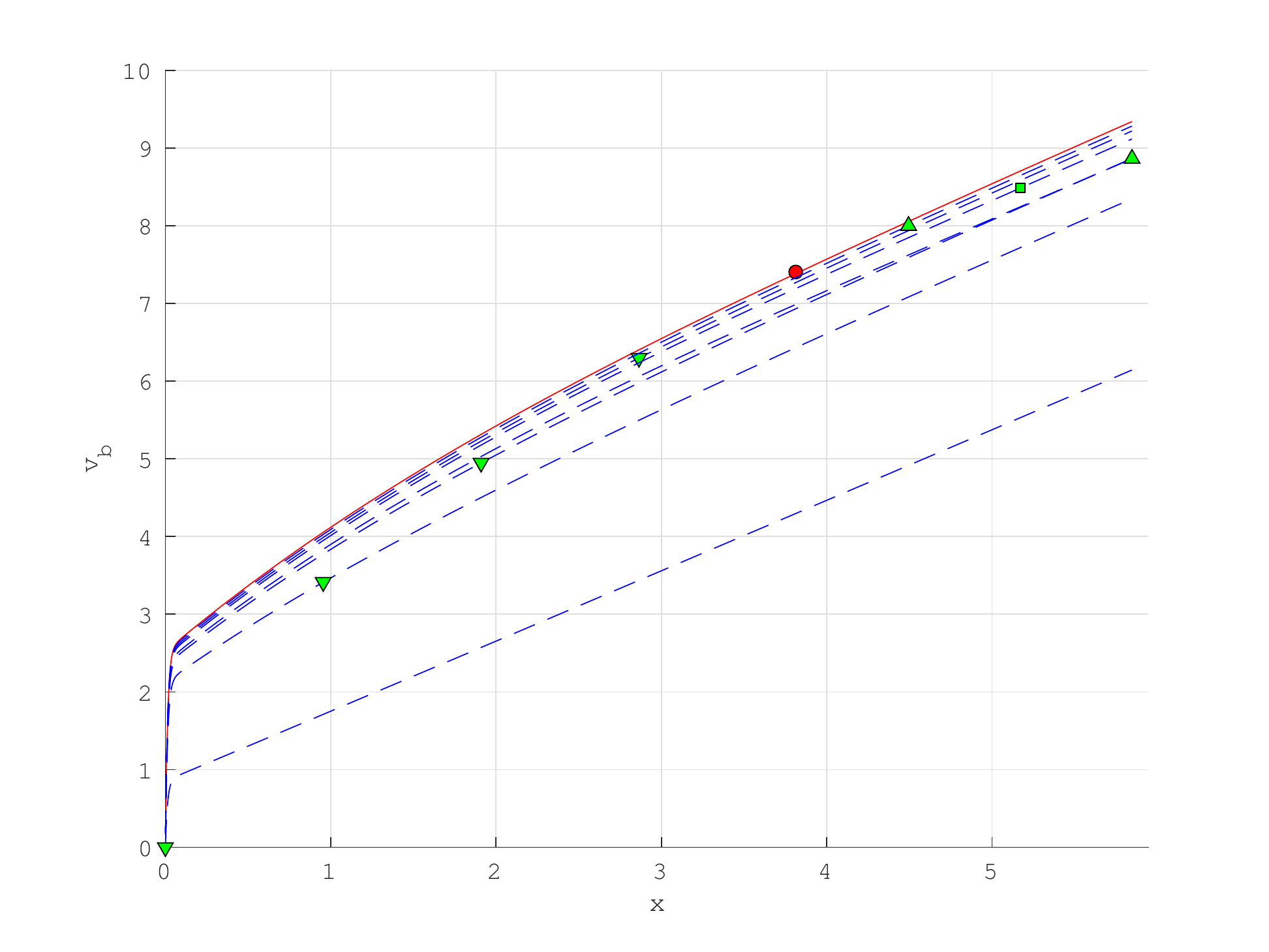} & \includegraphics[scale=0.35]{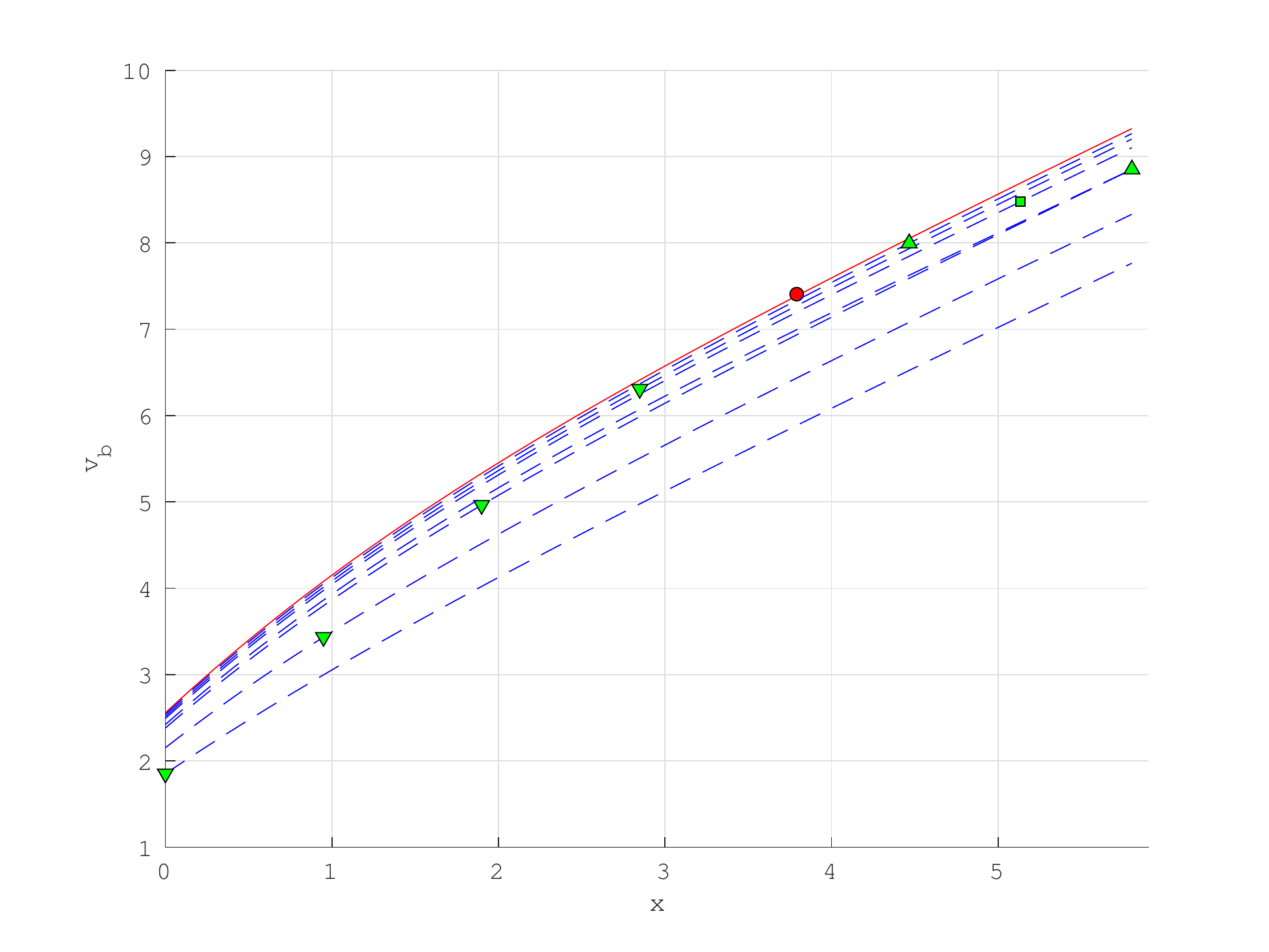}  \\
 Case 1 & Case 1' \\
 \includegraphics[scale=0.35]{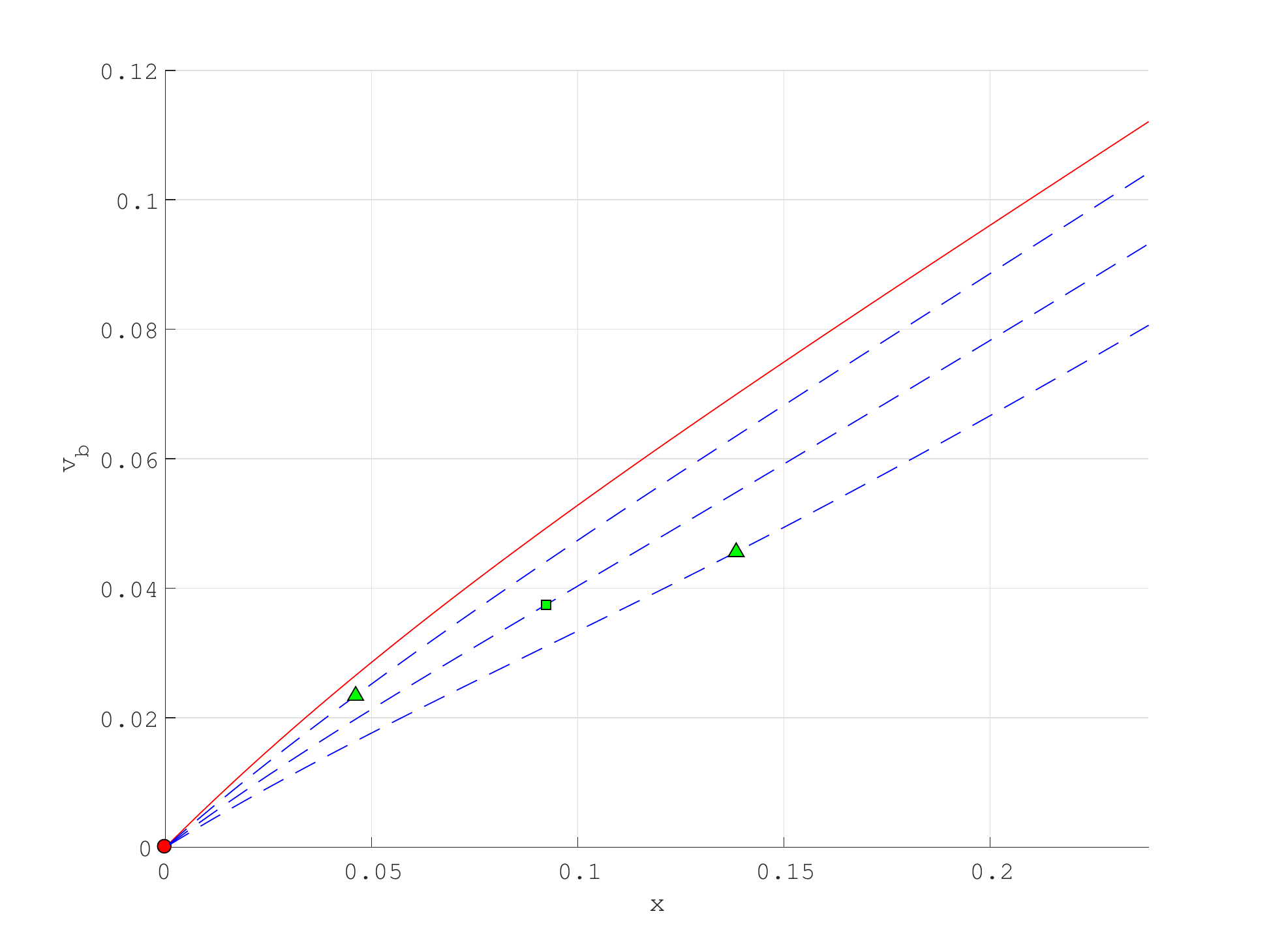} & \includegraphics[scale=0.35]{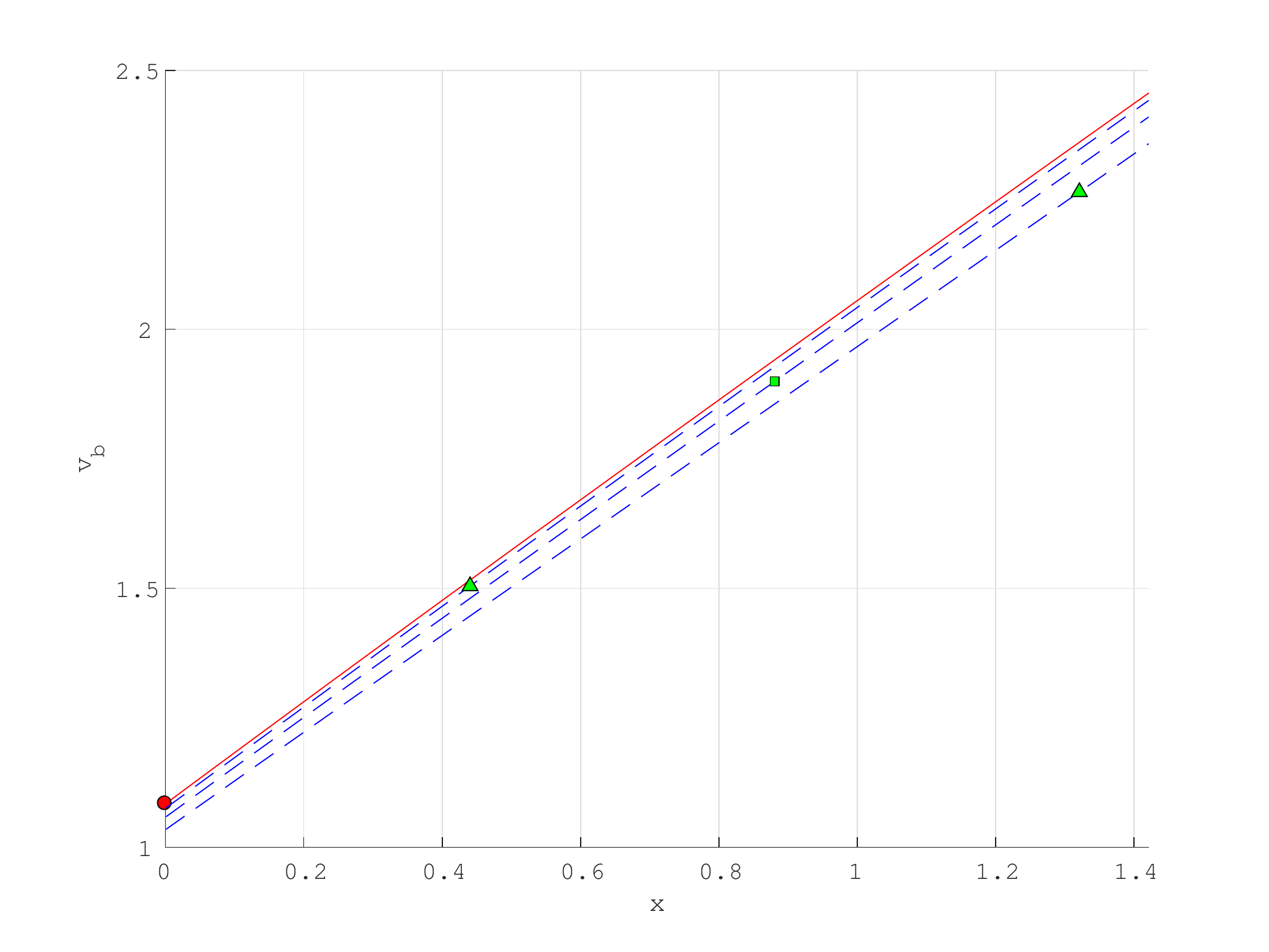}  \\
 Case 2 & Case 2' \\
 \includegraphics[scale=0.35]{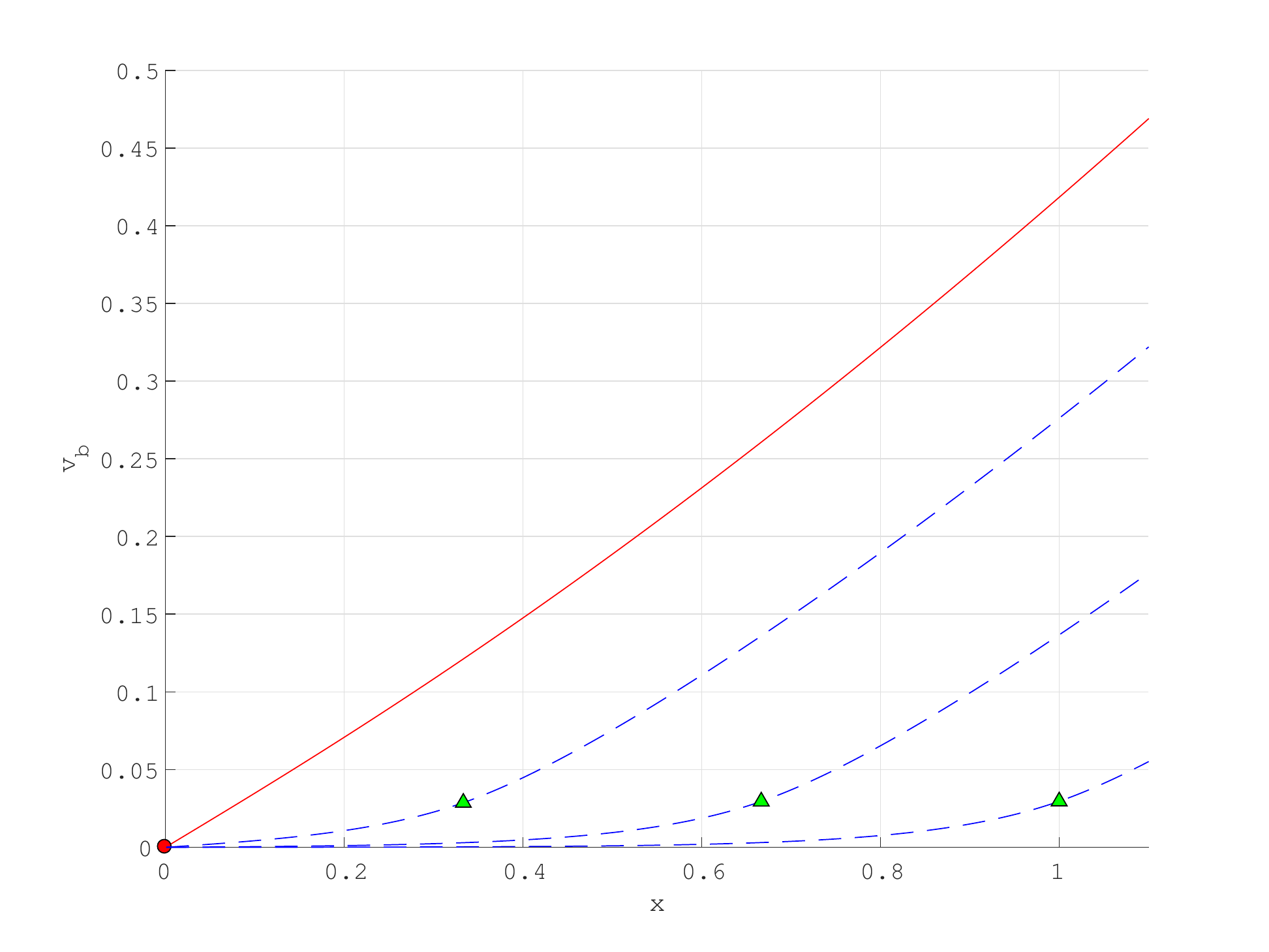} & \includegraphics[scale=0.35]{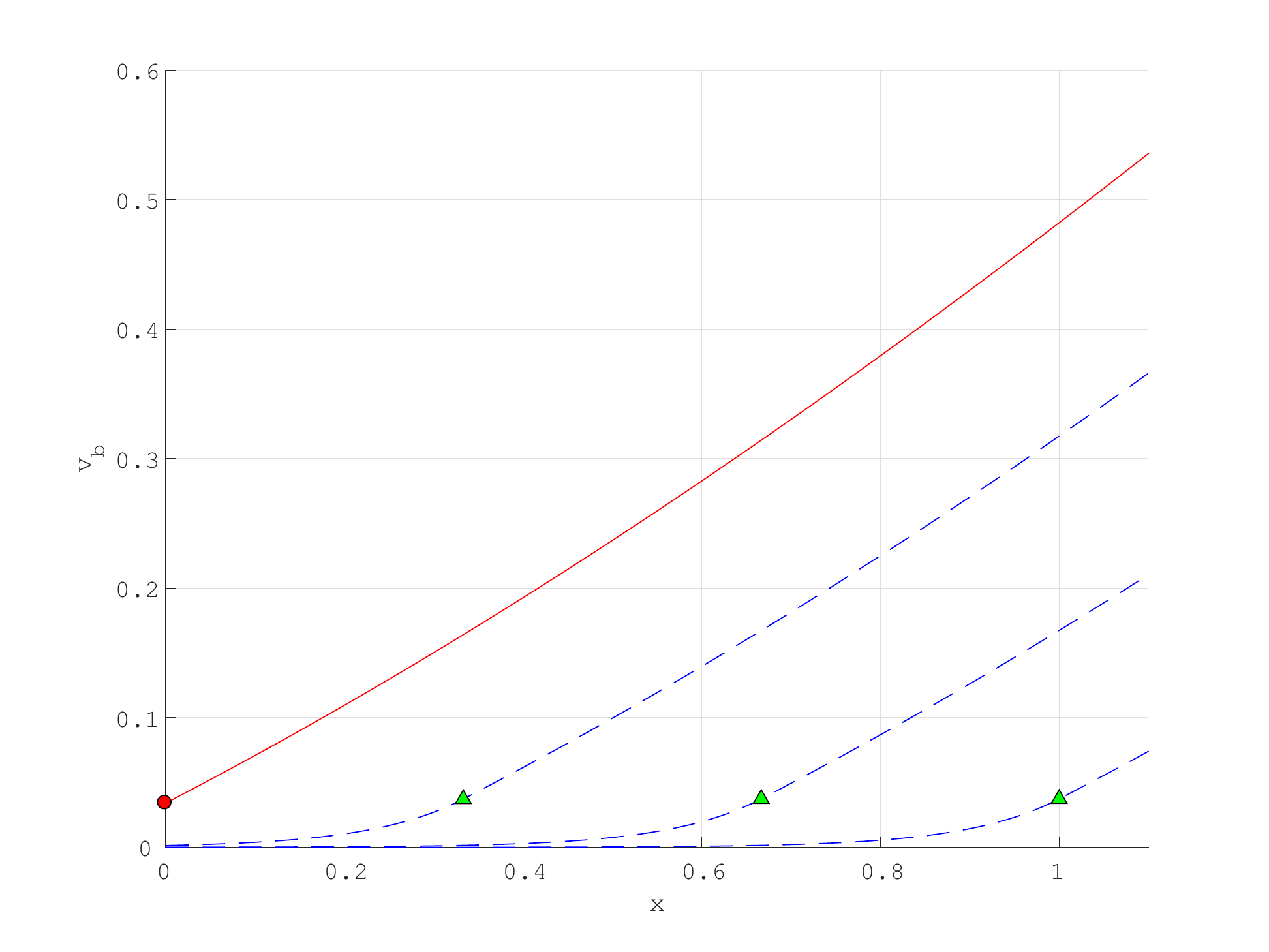}  \\
 Case 3 & Case 3' 
 \end{tabular}
\end{minipage}
\caption{The corresponding value function $v_{b^*}(x)$ (solid) along with suboptimal expected NPVs $v_b$ (dotted) for $ b = 0, b^*/4, b^*/2, 3b^*/4, (b^*+\bar{b})/2, \bar{b}$, 
    and $\bar{b}+(\bar{b}-b^*)/2$ for \textbf{Cases 1} and \textbf{1'}; $b =\bar{b}/2, \bar{b}$, and  $3\bar{b}/2$ for \textbf{Cases 2} and \textbf{2'}; and $b =1/3,2/3$, and $1$ for \textbf{Cases 3} and \textbf{3'}.  The values at $b^*$ are indicated by circles. Those at the suboptimal barriers $b > b^*$ (resp.\ $b < b^*$)  are indicated by up-pointing (resp.\ down-pointing) triangles when $b \neq \bar{b}$ and those at $b = \bar{b}$
 are indicated by squares.} \label{figure_v}
\end{center}
\end{figure}
 
 \subsection{Sensitivity analysis} We now numerically study the behaviors of the optimal barrier $b^*$ and the value function $v_{b^*}$ with respect to the parameters describing the problem.  In the remaining numerical results,  we set $\kappa = \lambda = 1$, $c = 1.5$, $r = 0.5$, and $q = 0.05$, unless stated otherwise. Both unbounded and bounded variation cases with $\sigma = 0.2$ and $\sigma = 0$ are considered.
 
 \begin{figure}[htbp]
\begin{center}
\begin{minipage}{1.0\textwidth}
\centering
\begin{tabular}{cc}
 \includegraphics[scale=0.35]{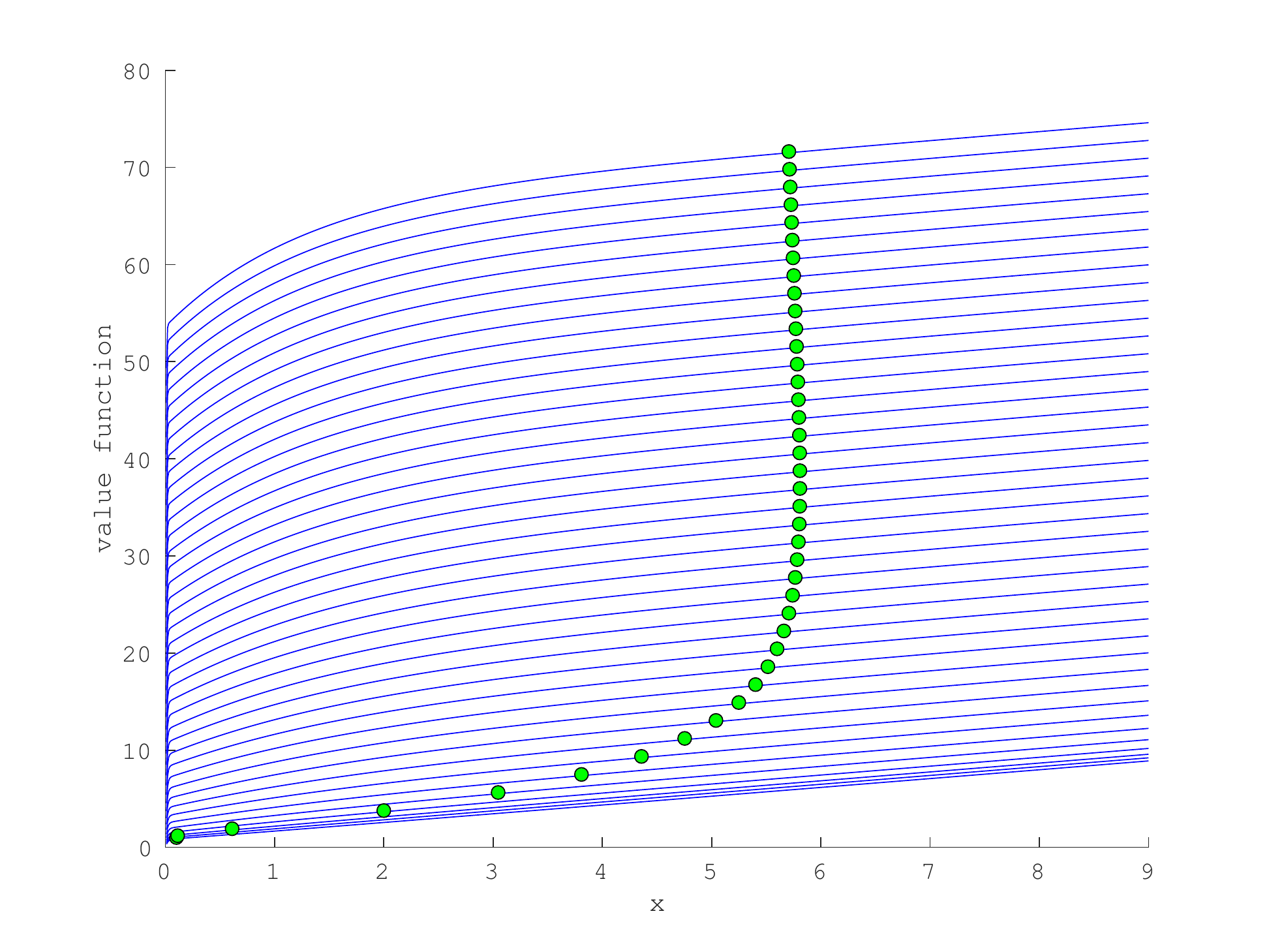} & \includegraphics[scale=0.35]{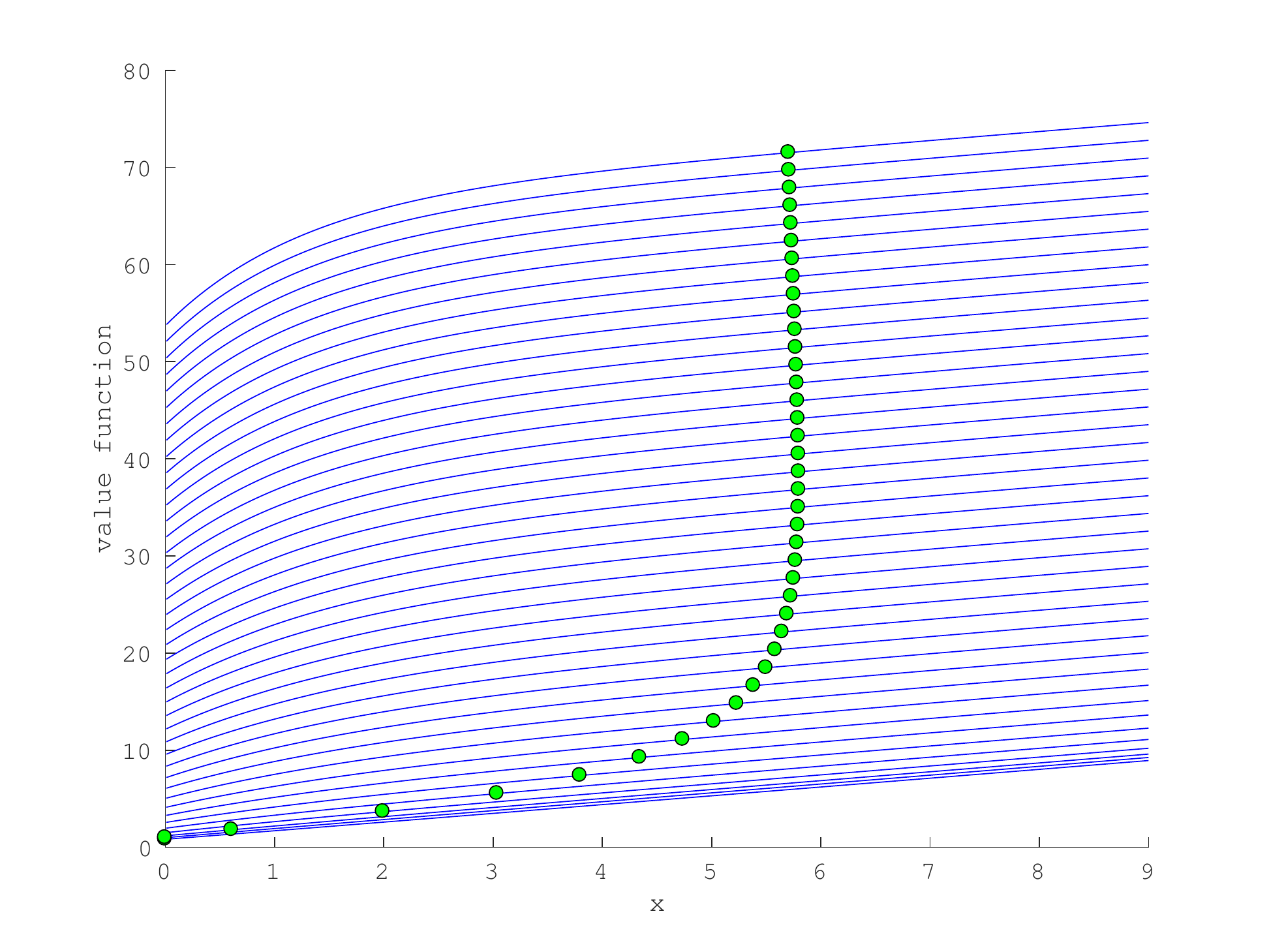}  \\
 $\sigma > 0$ & $\sigma = 0$ \end{tabular}
\end{minipage}
\caption{Plots of $v_{b^*}$ for $c=1,1.1,\ldots, 4.9$, and $5$ with $\sigma = 0.2$ (left) and $\sigma = 0$ (right). The values at $b^*$ are indicated by circles. The function $v_{b^*}$ is increasing in $c$ uniformly in $x$.
} \label{sensitivity_c}
\end{center}
\end{figure}

Figure \ref{sensitivity_c} plots $v_{b^*}$ and the points at $b^*$ for various values of the drift parameter $c$.  Naturally, the value function $v_{b^*}$ is increasing in $c$ uniformly in $x$. It is also observed that $b^* = 0$ for sufficiently small values of $c$, and increases in $c$ to some finite limit.
 
 \begin{figure}[htbp]
\begin{center}
\begin{minipage}{1.0\textwidth}
\centering
\begin{tabular}{cc}
 \includegraphics[scale=0.35]{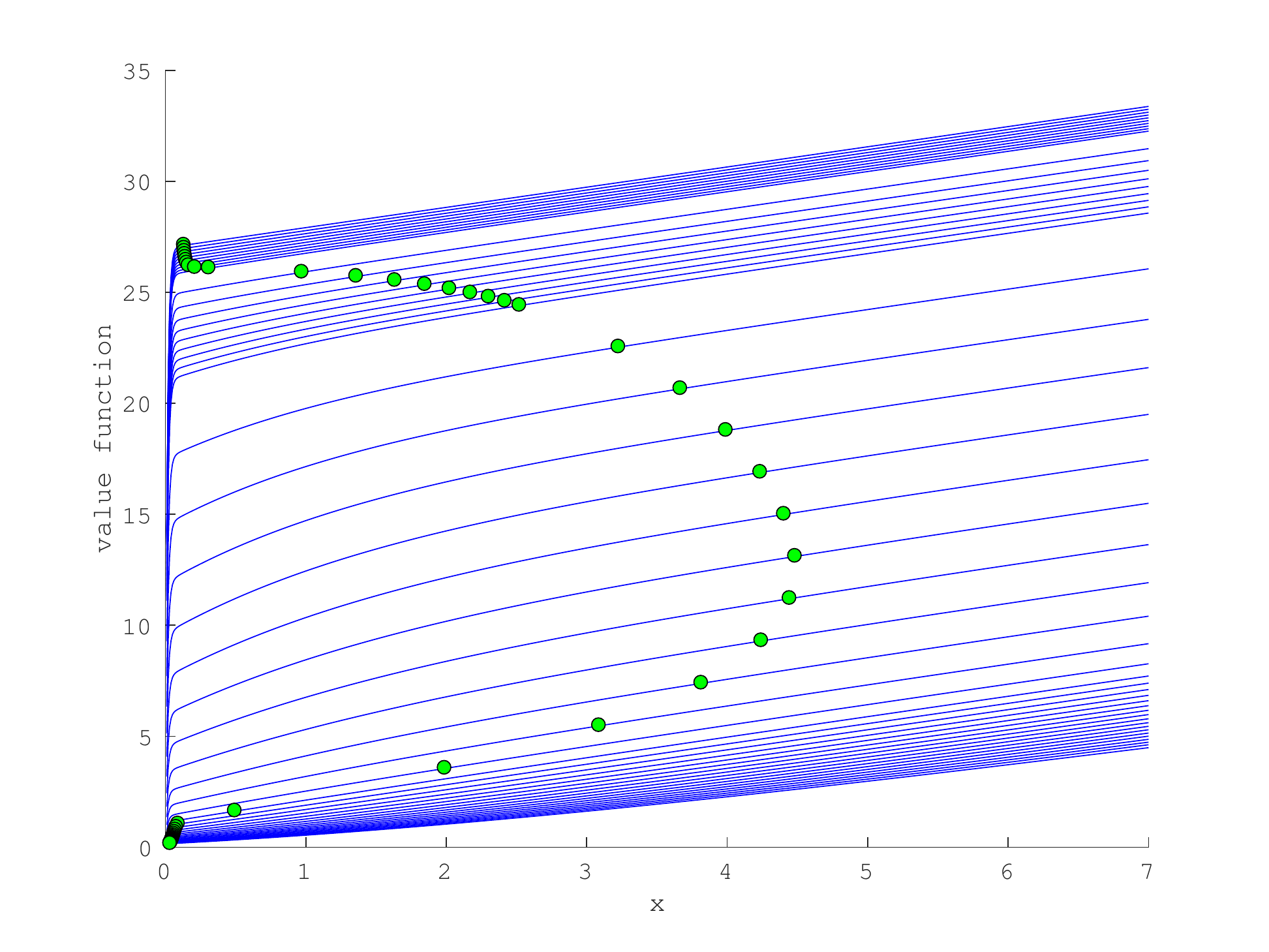} & \includegraphics[scale=0.35]{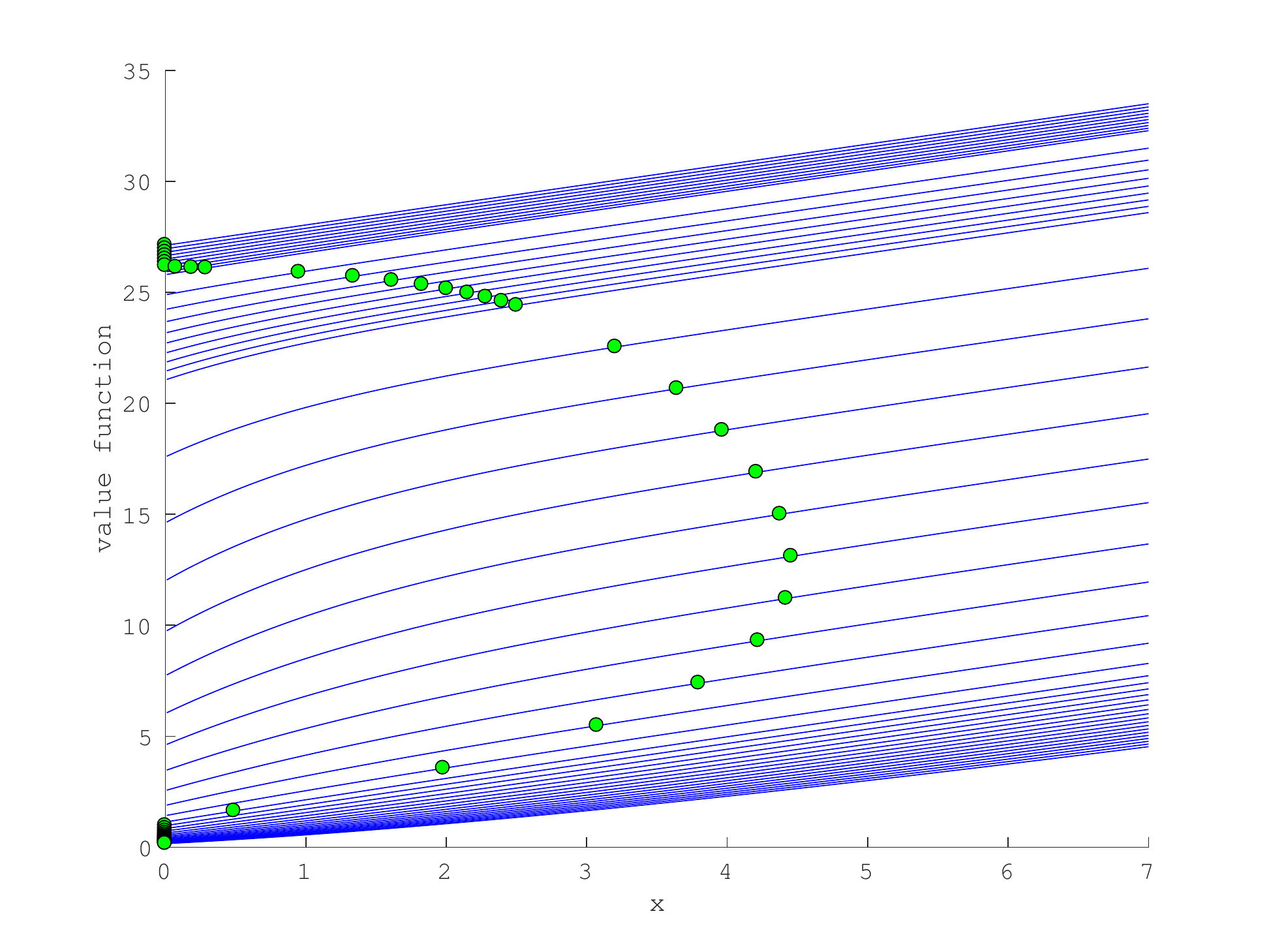}  \\
 $\sigma > 0$ & $\sigma = 0$ \end{tabular}
\end{minipage}
\caption{Plots of $v_{b^*}$ for $\kappa= 0.001$, $0.002$, $\ldots$, $0.008$,$0.009$,$0.01$,$0.02$, $\ldots$, $0.08$, $0.09$, $0.1$, $0.2$, $\ldots$, $2.9$, and $3$ with $\sigma = 0.2$ (left) and $\sigma = 0$ (right). The values at $b^*$ are indicated by circles. The function $v_{b^*}$ is decreasing in $\kappa$ uniformly in $x$.
} \label{sensitivity_kappa}
\end{center}
\end{figure}

Figures \ref{sensitivity_kappa} and \ref{sensitivity_lambda} plot the results for various values of the jump rate $\kappa$ and the jump-size parameter $\lambda$, respectively. It is confirmed that $v_{b^*}$ decreases in $\kappa$ and increases in $\lambda$ (uniformly in $x$). Interestingly, $b^*$ is not monotone in these parameters (contrary to what we observed in Figure \ref{sensitivity_c}).  When $\kappa$ is sufficiently large, the future aspect is negative, and hence $b^* =0$. As $\kappa$ decreases, $b^*$ departs from zero and starts increasing.  However, the lower the value of $\kappa$, the easier it is to avoid ruin.  Therefore, with sufficiently small $\kappa$, $b^*$ can be set low. Owing to these tradeoffs, $b^*$ is not monotone in $\kappa$.
 The same observation applies to the analysis for $\lambda$.

\begin{figure}[htbp]
\begin{center}
\begin{minipage}{1.0\textwidth}
\centering
\begin{tabular}{cc}
 \includegraphics[scale=0.35]{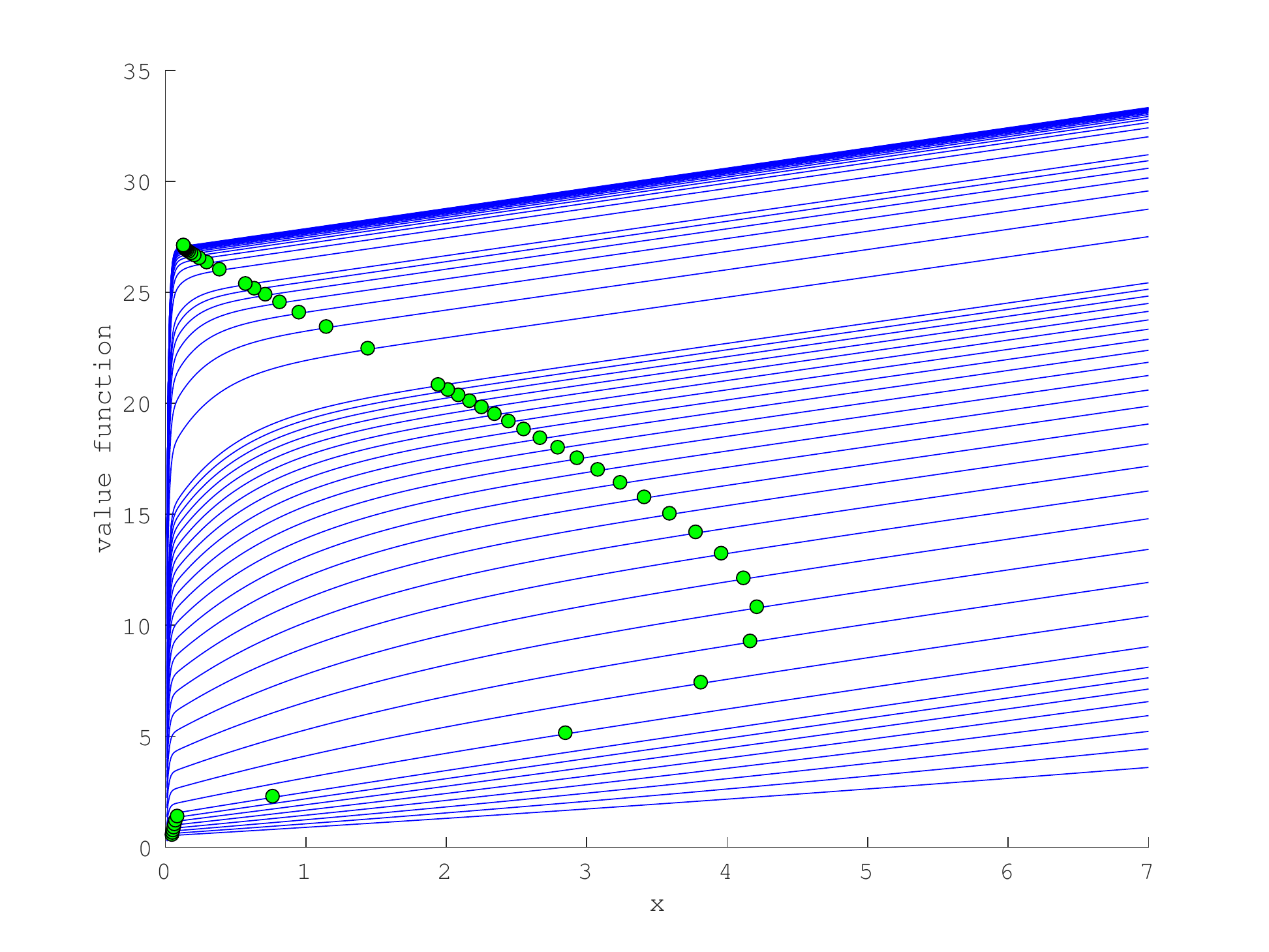} & \includegraphics[scale=0.35]{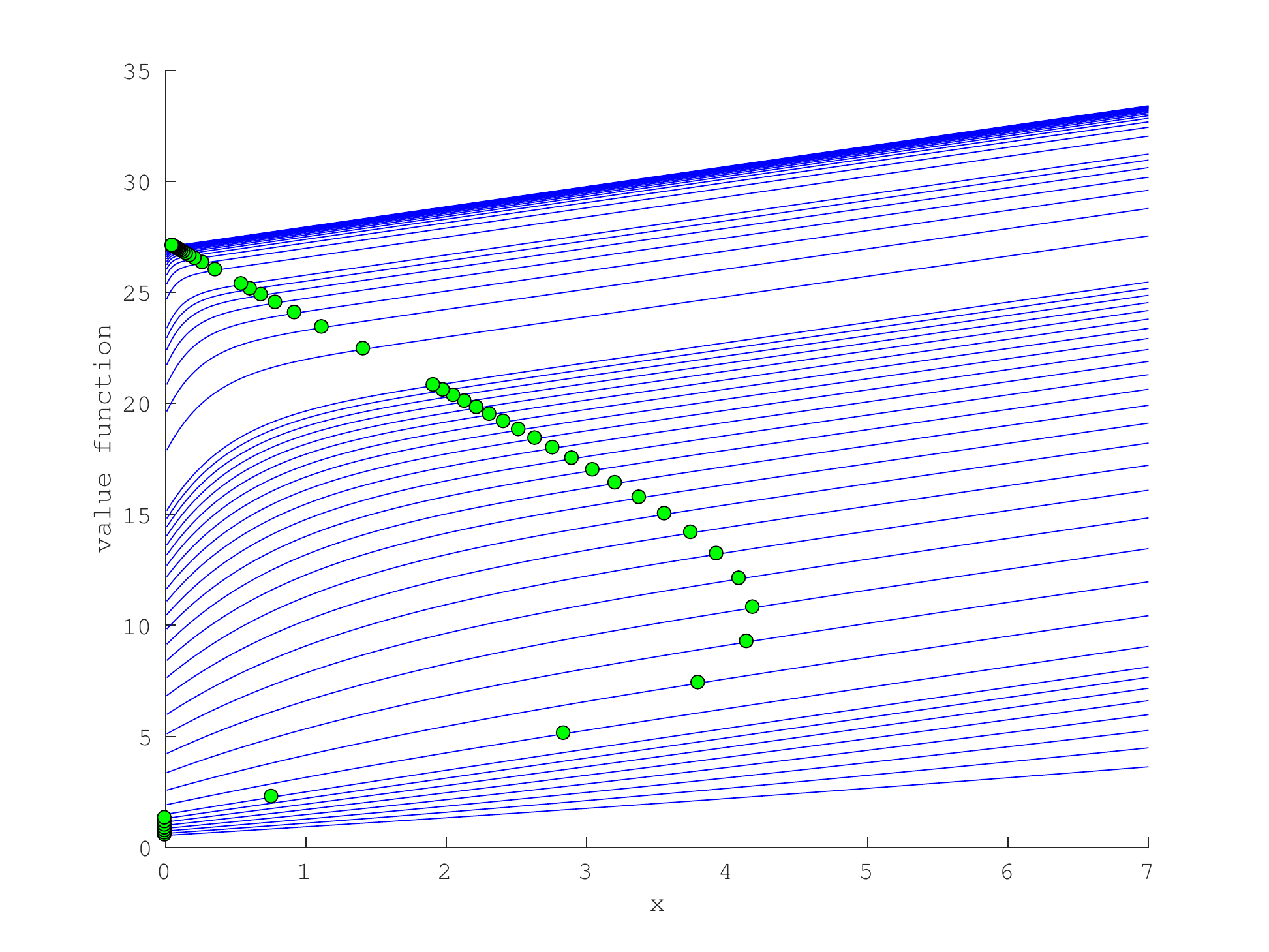}  \\
 $\sigma > 0$ & $\sigma = 0$ \end{tabular}
\end{minipage}
\caption{Plots of $v_{b^*}$ for $\lambda= 0.1, 0.2, \ldots, 2.9,3, 4, \ldots, 9,15, 20, \ldots, 95,$ and $100$ 
with $\sigma = 0.2$ (left) and $\sigma = 0$ (right). The values at $b^*$ are indicated by circles. The function $v_{b^*}$ is increasing in $\lambda$ uniformly in $x$.
} \label{sensitivity_lambda}
\end{center}
\end{figure}

\begin{figure}[htbp]
\begin{center}
\begin{minipage}{1.0\textwidth}
\centering
\begin{tabular}{cc}
 \includegraphics[scale=0.35]{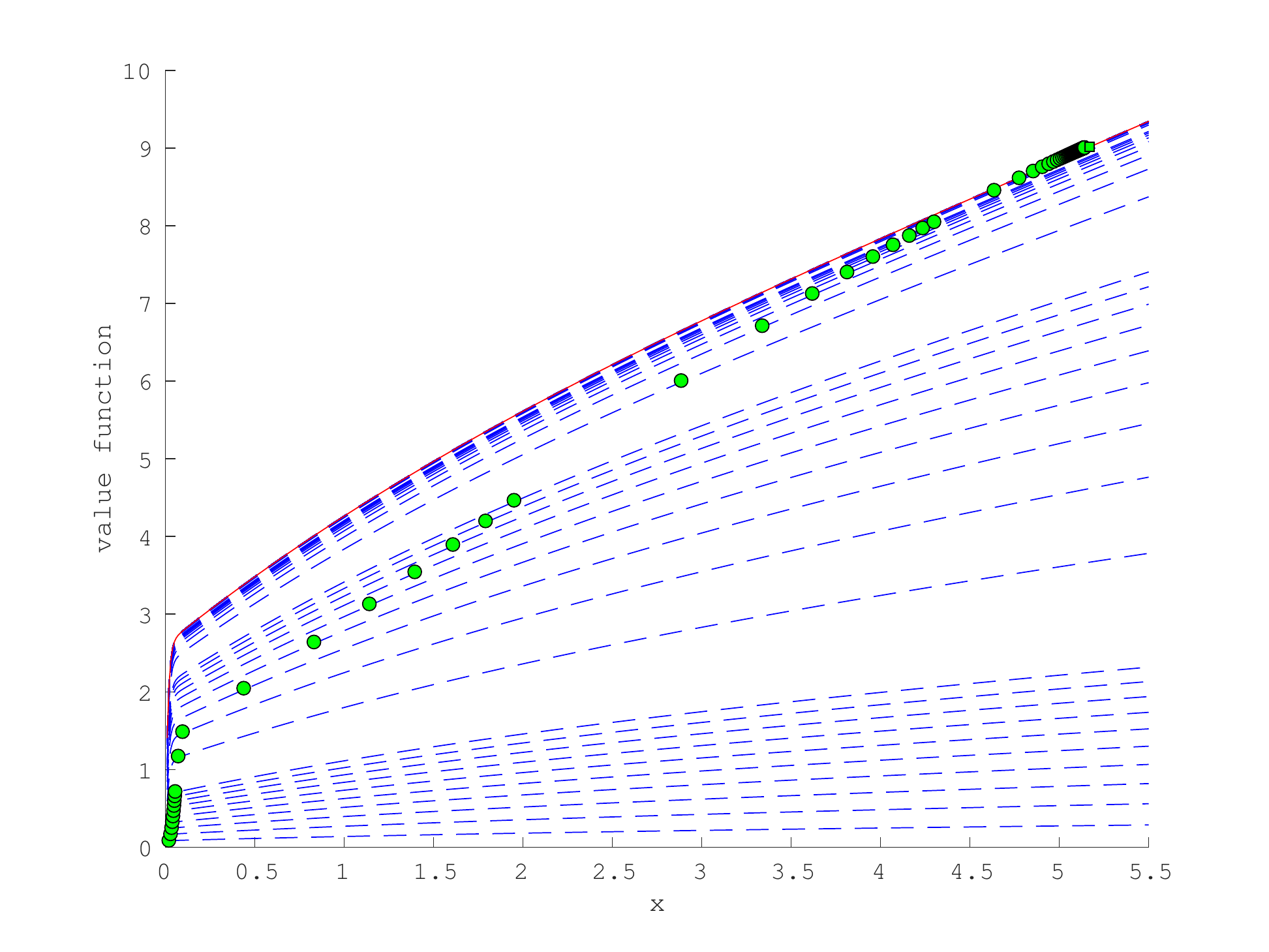} & \includegraphics[scale=0.35]{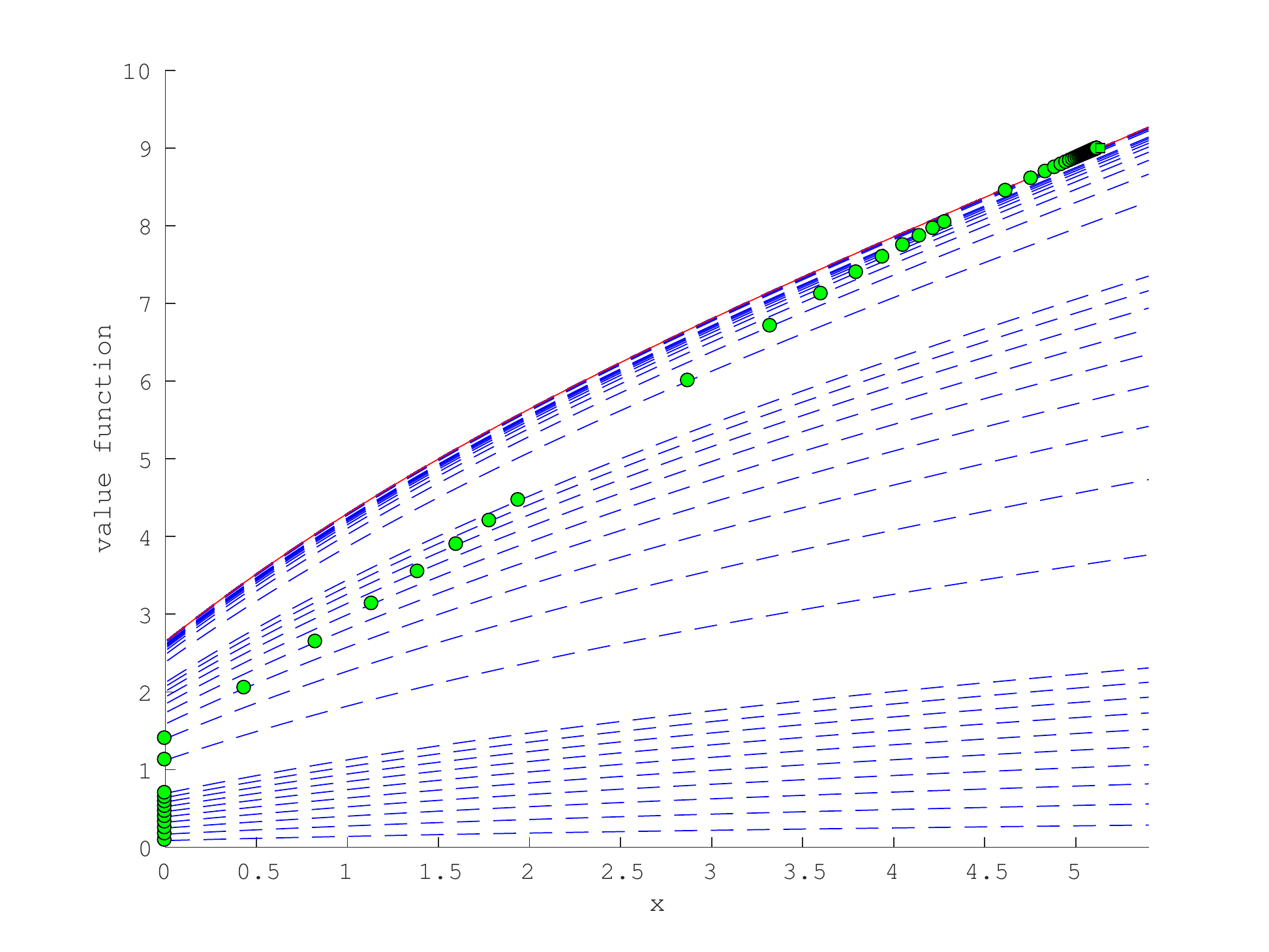}  \\
 $\sigma > 0$ & $\sigma = 0$ \end{tabular}
\end{minipage}
\caption{Plots of $v_{b^*}$ (dotted) for $r= 
0.001,0.002, \ldots, 0.01,0.02, \ldots, 0.09, 0.1, 0.2$, $\ldots, 0.9, 1, 2, \ldots, 99,$ and $100$ along with the classical value function $\bar{v}$ (solid) with $\sigma = 0.2$ (left) and $\sigma = 0$ (right). The values of $v_{b^*}$ at $b^*$ are indicated by circles and those of $\bar{v}$ at $\bar{b}$ are indicated by squares. The function $v_{b^*}$ is increasing in $r$ uniformly in $x$.
} \label{sensitivity_r}
\end{center}
\end{figure}

Finally, we study the behaviors of $v_{b^*}$ and $b^*$ with respect to the rate of dividend payment opportunities $r$.  Figure \ref{sensitivity_r} plots $v_{b^*}$ and the points at $b^*$ for various values of $r$ along with those in the classical case \eqref{classical_value_function}.  It is confirmed that $v_{b^*}$ is monotonically increasing in $r$ (uniformly in $x$)  to the classical case. 
As studied in Lemma \ref{lemma_convergence_r_inf}, $b^*$ is monotone in $r$, and converges to zero as $r \downarrow 0$ and to $\bar{b}$ as $r \uparrow \infty$; this confirms the results in  Lemma \ref{lemma_convergence_r_inf}.  While the convergence to zero is relatively fast, we find that the convergence to $\bar{b}$ is rather slow.  In \cite{PY}, the same numerical analysis was obtained for the spectrally positive case; in their case $b^*$ was shown to accurately approximate the optimal barrier for the classical case even for a moderate value of $r$. We conjecture that this difference is due to the chance of jumping to ruin between Poisson observation times (which can be made negligible in the absence of downward jumps). With downward jumps, $b^*$ is more sensitive to the choice of $r$.  

\section*{Acknowledgements}
The authors are grateful to the  anonymous referees for helpful comments.
K.\ Noba and K.\ Yano were supported by JSPS-MAEDI Sakura program.
K. Yano was supported by KAKENHI 26800058 and partially by KAKENHI
15H03624 and KAKENHI 16KT0020. J. L. P\'erez  was  supported  by  CONACYT,  project  no.\ 241195.
K. Yamazaki was supported by MEXT KAKENHI grant no.\  	17K05377.


\begin{thebibliography}{99}
		
		

		\bibitem{ABT}\sc Albrecher, H., B\"auerle, N., Thonhauser, S. \rm  Optimal dividend-payout in random discrete time.  {\it Statistics \& Risk Modeling with Applications in Finance and Insurance} {\bf 28(3)}, 251-276, (2011).

		\bibitem{ACT}\sc Albrecher, H., Cheung, E. C., Thonhauser, S. \rm  Randomized observation periods for the compound Poisson risk model: dividends.  {\it ASTIN Bulletin} {\bf 41(2)}, 645-672, (2011).



		\bibitem{A} {\sc Asmussen, S.} {\em Ruin Probabilities. Advances Series on Statistical Science \& Applied Probability.} {\bf 2}, World Scientific, River Edge, NJ. (2000). 	
		
		\bibitem{Asmussen_2004}\sc Asmussen, S., Avram, F., and Pistorius, M.R. \rm  Russian and {A}merican put options under exponential phase-type {L}\'evy models. {\it Stochastic Process. Appl.} {\bf 109(1)}, 79--111, (2004).
		
		
		
		
		\bibitem{ATW}\sc Avanzi, B., Tu, V., and Wong, B. \rm  On optimal periodic dividend strategies in the dual model with diffusion. {\it Insur. Math. Econ.} {\bf 55}, 210-224, (2014).
		
				\bibitem{ATW_erlang}\sc Avanzi, B., Tu, V., and Wong, B. \rm  Optimal dividends under Erlang(2) inter-dividend decision times. {\it Working paper}, (2017).
		
		
		
		\bibitem{APP2007}\sc Avram, F., Palmowski, Z., and Pistorius, M.R. \rm  On the optimal dividend problem for a spectrally negative L\'evy process. {\it Ann. Appl. Probab.} {\bf 17}, 156-180, (2007).
		
		\bibitem{APY}\sc Avram, F., P\'erez, J.L., and Yamazaki, K. \rm  Spectrally negative {L}\'evy processes with Parisian reflection below and classical reflection above.  {\it Stochastic Process. Appl.} {\bf 128(1)}, 255--290, (2018).
		
		
		
		
		
		
		
		\bibitem{CGMY}{\sc Carr, P., Geman, H., Madan, D. B., and Yor, M. } The fine structure of asset returns: An empirical investigation.  {\em J. Bus.}, {\bf 75(2)}, 305--332, (2002).

	
		
		\bibitem{Chan2011}
		\sc Chan, T., Kyprianou, A.E., and Savov, M. \rm Smoothness of scale functions for spectrally negative \lev processes. {\it Probab. Theory Relat. Fields} {\bf 150}, 691-708, (2011).
		
		
		\bibitem{DG}{\sc Dufresne, F., and Gerber, H. U.} The probability of ruin for the inverse Gaussian and related processes.  {\em Insurance Math. Econom.}, {\bf 12}, 9--22, (1993).
		\bibitem{DG1}{\sc Dufresne, F., and Gerber, H. U.} Risk theory with the gamma
		process.  {\em Astin Bull.}, {\bf 22}, 177--192, (1991).
		
	\bibitem{EY}\sc Egami, M. and Yamazaki, K. \rm Precautionary measures for credit risk management in jump models.  {\it Stochastics.} {\bf 111-143}, 1--22, (2013).
		
			\bibitem{Egami_Yamazaki_2010_2}\sc Egami, M. and Yamazaki, K. \rm Phase-type fitting of scale functions for spectrally negative {L}\'evy processes.  {\it J. Comput. Appl. Math.} {\bf 264}, 1--22, (2014).

			\bibitem{FW}\sc Feldmann, A. and Whitt, W. \rm Fitting mixtures of exponentials to long-tail distributions to analyze network performance models.  {\it Perform.\ evaluation} {\bf 31(3-4)}, 245--279, (1998).

		
		
		
		
		
		
		
		
		
		\bibitem{F}
		\sc Furrer, H.  \rm Risk processes perturbed by $\alpha$-stable L\'evy motion. {\it Scand. Actuar. J.} 59--74, (1998).
		
		
		
		
		\bibitem{KKR}\sc  Kuznetsov, A., Kyprianou, A.E., and Rivero, V. \rm The
		theory of scale functions for spectrally negative L\'evy processes. {\it L\'evy Matters II, Springer Lecture Notes in Mathematics}, (2013).
		
		
		
		
		
		
		
		
		\bibitem{K} \sc Kyprianou, A.E. {\it Introductory lectures on
			fluctuations of L\'evy processes with applications.} \rm Springer,
		Berlin, (2006).
		
		
		
			\bibitem{KLP} \sc Kyprianou, A.E., Loeffen, R., and P\'erez, J.L. \rm Optimal control with absolutely continuous strategies for spectrally negative L\'evy processes. {\it J. Appl. Probab.} {\bf 49(1)}, 150-166, (2012).
		
		
		
		
		
		
		
		

			\bibitem{Lempa}{\sc Lempa, J.} Irreversible investment decisions under return and time uncertainty: optimal timing with a Poisson clock.  Preprint series. {\it University of Oslo,
Pure Mathematics}, No 23
ISSN 0806–2439, (2010).	



	\bibitem{leung2015analytic} \sc Leung, T., Yamazaki, K., and Zhang, H. \rm  An analytic recursive method for optimal multiple stopping: Canadization and phase-type fitting.  {\it  Int. J.
		Theor. Appl. Finance} {\bf 18(5)},  1550032, (2015).
			\bibitem{L}{\sc Loeffen, R. L.}  On optimality of the barrier strategy in de Finetti's dividend problem for spectrally negative L\'evy processes.  {\it Ann. Appl. Probab.}, {\bf 18(5)}, 1669-1680, (2008).
				
			\bibitem{L2}{\sc Loeffen, R. L.} An optimal dividends problem with a terminal value for spectrally negative L\'evy processes with a completely monotone jump density.  {\em J. Appl. Prob.}, {\bf 46}, 85--98, (2009).	
					
			\bibitem{LRZ}{\sc Loeffen, R. L., Renaud, J.-F. and Zhou, X.}  Occupation times of intervals until first passage times for spectrally negative L\'evy processes with applications.  {\em Stochastic Process. Appl.}, {\bf 124} (3), 1408--1435, (2014).
		
		\bibitem{PY}\sc P\'erez, J.L.\ and Yamazaki, K. \rm On the optimality of periodic barrier strategies for a spectrally positive L\'evy process.  {\it Insur. Math. Econ.}, {77}, 1--13, (2017).
		
		
			\bibitem{PYM}\sc P\'erez, J.L.\ and Yamazaki, K. \rm Mixed periodic-classical barrier strategies for L\'evy risk processes.  {\it arXiv}, 1609.01671, (2016).
		
		
		
		
		
	
		
		
		
		


				\bibitem{SS} \sc Schaefer, M. and Szimayer, A. \rm Optimal stopping at random intervention times. Available at SSRN: https://ssrn.com/abstract=3049853 (2017).

\bibitem{BL} \sc
Boyarchenko, S. and Levendorskii, S. \rm Poisson bandits of evolving shade. Available at SSRN: https://ssrn.com/abstract=2649713 or http://dx.doi.org/10.2139/ssrn.2649713 (2015).

		\bibitem{Sims} \sc Sims, C.A. \rm  Implications of rational inattention. {\it J.\ Monetary Econ.}, {\bf 50(3)}, 665–690, (2003).

		
		\bibitem{YW} \sc Yin, C. and  Wang, C. \rm  Optimality of the barrier strategy in de Finetti's dividend problem for
		spectrally negative L\'evy processes: An alternative approach. {\it J. Comp. Appl. Math.}, {\bf 233}, 482--491, (2009).
		
		
				\bibitem{ZCY} \sc Zhao, Y., Chen, P., Yang, H. \rm Optimal periodic dividend and capital injection problem for spectrally positive L\'evy processes. {\it Insur. Math. Econ.}, {74}, 135--146, (2017).



		
		
		
		
	\end{thebibliography}
\end{document}